\title{$\L8$ spaces and derived loop spaces}
\author{Ryan Grady and Owen Gwilliam}
\begin{document}

\maketitle

\begin{abstract}
We develop further the approach to derived differential geometry introduced in Costello's work on the Witten genus \cite{WG2}. In particular, we introduce several new examples of $\L8$ spaces, discuss vector bundles and shifted symplectic structures on $\L8$ spaces, and examine in some detail the example of derived loop spaces. This paper is background for \cite{GGqft}, in which we define a quantum field theory on a derived stack, building upon Costello's definition of an effective field theory in \cite{Cos1}.
\end{abstract}

\tableofcontents

\section{Introduction and overview}

In his work constructing the Witten genus with a two-dimensional sigma model \cite{WG2}, Costello introduces a framework for derived differential geometry. His approach is functorial in nature: he defines a {\em derived stack} as a functor from a category of test objects to the category of simplicial sets, satisfying some conditions familiar from geometry. His test objects are a modest enlargement of smooth manifolds to include nilpotent and derived-nilpotent directions, which is why this approach is a version of derived differential geometry. The conditions characterizing his derived stacks are mild.\footnote{It would be interesting to work out the analogs of Deligne-Mumford and Artin stacks in this setting, as Costello's conditions simply pick out homotopy sheaves and hence encompass functors that are not very geometric. It would also be quite useful and interesting to compare this approach to the approaches of Carchedi-Roytenberg \cite{CarchediRoytenberg}, Borisov-Noel \cite{BorisovNoel}, Lurie \cite{LurieDAGV}, Spivak \cite{Spivak}, Schreiber \cite{Schreiber}, and Joyce \cite{Joyce}.} Moreover, he points out a nice subset of derived stacks that have a kind of ``ringed space" description, which he calls $\L8$ spaces, that exploit the intimate relationship between dg Lie algebras and deformation theory. The appeal of $\L8$ spaces is that they allow one to do explicit computations very efficiently, particularly computations that appear in (perturbative) quantum field theory (QFT).

Our goal here is to verify some simple properties of Costello's formalism and to point out some appealing features. Much of the paper simply elaborates on Costello's work in \cite{WG2}. In a companion paper \cite{GGqft}, we articulate how these definitions allow one to extend the reach of his  formalism for perturbative quantum field theory to encompass less-perturbative QFT, developing explicitly some ideas that are implicit in Costello's work. 

\subsection{An overview of the paper}

We begin by developing the definition of a derived stack, in Costello's sense. We then introduce a special class of derived stacks, the $\L8$ spaces, which can be thought of as families of formal moduli problems parametrized by smooth manifolds. Thereafter, we introduce some simple examples of $\L8$ spaces and a modicum of geometry on $\L8$ spaces (notably vector bundles and shifted symplectic forms). Finally, we discuss various kinds of derived loop spaces that are relevant to our work on a 1-dimensional sigma model \cite{GG}.

\subsection{The motivation from physics}

Before delving into the text itself, we remark on the motivations behind this approach. The immediate impetus arose from Costello's goal of encoding a nonlinear sigma model into his formalism for quantum field theory \cite{Cos1}.  The reader interested just in derived geometry is welcome to skip this short discussion. As we have not yet defined any of the principal objects of our formalism, we will speak loosely in geometric language.

Let $M$ be the smooth manifold on which the fields of our field theory will live. In his book, Costello defined families of field theories over {\em nilpotent dg manifolds}. More explicitly, for $\cN$  a nilpotent dg manifold, suppose we have an $M$-fiber bundle $M \hookrightarrow P \to \cN$ and a relative vector bundle $V \to P$.  Costello's formalism of renormalization works in this relative situation, where for each point $x$ in $\cN$, we have a field theory on $V_x \to M_x$ whose underlying fields are the smooth sections of the fiberwise vector bundle. 

Thus Costello needed to rephrase a nonlinear sigma model --- a field theory whose fields are $\Maps(M,X)$, the space of smooth maps from $M$ into a manifold $X$ --- within such a framework, where fields are sections of a vector bundles. There is a standard idea from physics that suggests how to do this, and $\L8$ spaces allow one to formulate it as a  mathematical procedure. 

We would like to compute the path integral 
\[
\int_{\phi \in \Maps(M,X)} e^{-S(\phi)/\hbar} \,\sD \phi,
\] 
or -- more accurately -- provide an asymptotic expansion for this integral in the regime where $\hbar$ is infinitesimally small. For very small $\hbar$, the measure should be concentrated in a small tubular neighborhood $Tub$ of $Sol$, the subspace cut out by the solutions to the equations of motion. Thus, the integral can be well-approximated by pulling back the measure to the neighborhood $Tub$. We then identify this tubular neighborhood with the normal bundle $N$ to $Sol$ and compute the approximate integral in two steps. First, we use perturbative methods to compute the fiberwise integral and obtain a measure on $Sol$. Second, we integrate over $Sol$ itself. This integral over $Sol$ breaks up into a sum over the connected components of $Sol$.

Among the connected components of $Sol$, the lowest-energy solutions for a sigma model are given typically by the constant maps, and this component of the space of solutions looks like a copy of the target manifold $X$ itself. This component usually provides the dominant term in the sum over components. Hence, our path integral should be well-approximated by integrating over a tubular neighborhood $T$ just of the constant maps $X \hookrightarrow \Maps(M,X)$. Note that a really small perturbation of such a constant map $\underline{x}: M \to x \in X$ can be viewed as a map from $M$ to a small ball in $X$ around $x$. 

In physics, one often applies this heuristic idea as follows. One views $T$ as a vector bundle over $X$ whose fiber at $x \in X$ is $\Gamma(M, \underline{x}^* T_x X)/T_x X$. (This vector space takes the quotient of all the smooth sections of the pullback tangent bundle --- which are the ways of wiggling the constant map $x$ --- by the subspace of sections that just move to a nearby constant map.) Formal derived geometry provides a nice mathematical language for describing the formal neighborhood $\cT$ of the constant maps inside $\Maps(M,X)$. Indeed, Costello showed that $\cT$ is an $\L8$ space. He then showed that his perturbative formalism interacted cleanly with $\L8$ spaces to realize the two-step process of integration. In the language of \cite{WG2}, the quantum BV theory he produces out of $\cT$ provides a \emph{projective volume form}. 

In our companion paper, we explain in more depth both the heuristic picture of path integral quantization of the nonlinear sigma model and the precise realization of that idea using Costello's pair of formalisms (for QFT and derived geometry).

\subsection{A comment on our imagined audience}

Our primary purpose for this formalism is the construction of quantum field theories, and we imagine that some of our audience has a working knowledge of homological algebra and category theory at the level of Weibel \cite{Weibel}  and some familiarity with the language of $\L8$ algebras, due to their role in deformation quantization {\it \`a la} Kontsevich \cite{KonDQ}. We have thus sought to explain constructions or concepts, like homotopy limits, that we use that fall outside those references. Those readers comfortable with homotopical algebra are encouraged to skip over such discussions.

\subsection{Acknowledgements}

First and foremost, we thank Kevin Costello for many helpful conversations and for introducing us to these ideas in the context of field theory. Our understanding on derived geometry, however limited, is rooted in discussions with John Francis, David Nadler, Anatoly Preygel, and Nick Rozenblyum. The influence of the ideas of Dennis Gaitsgory, Jacob Lurie, Bertrand To\"en, and Gabriele Vezzosi should also be clear: we thank them for their inspiring ideas, lectures, and texts. We have also benefited from conversations with many people on this topic, including David Ayala, David Carchedi, Lee Cohn, Vasiliy Dolgushev, Si Li, Thel Seraphim, Yuan Shen, Jim Stasheff, Mathieu Stienon, Stephan Stolz, Peter Teichner, Ping Xu, and Brian Williams.


\subsection{Notation}

\begin{itemize}
\item For $M$ a smooth manifold, let $\Omega^*_M$ denote the sheaf of differential forms on $M$ as a sheaf of commutative dg algebras. Let $\CC_M$ denote the locally constant sheaf assigning $\CC$ to any connected open.
\item For $\cC$ a category, we denote the set of morphisms from $x$ to $y$ by $\cC(x,y)$.
\item For $A$ a cochain complex, $A^\sharp$ denotes the underlying graded vector space. If $A$ is a cochain complex whose degree $k$ space is $A^k$, then $A[1]$ is the cochain complex where $A[1]^k = A^{k+1}$. 
\item For $A$ a dg module over a dg algebra $R$, the \emph{dual} of $A$, denoted $A^\vee$, means the graded dual, whose $k$th component $(A^\vee)^k$ is the set of degree $-k$ elements of $\Hom_{R^\sharp}(A^\sharp, R^\sharp)$. The differential is determined by requiring that the evaluation pairing ${\rm ev}: A^\vee \otimes_{R^\sharp} A \to R$ be a map of $R$-modules. 
\item Let $\Sets$ denote the category whose objects are sets and whose morphisms are functions between sets.
\item Let $\Delta$ denote the (finite) {\em ordinal category}, in which an object is a totally ordered finite set and a morphism is a nondecreasing function. We will usually restrict attention to the skeletal subcategory with objects
\[
[n] := \{0 < 1< \cdots < n\}
\]
and morphisms $f: [m] \to [n]$ such that $f(i) \leq f(j)$ for $i \leq j$.
\item Let $\ssets$ denote the category of simplicial sets, namely the category of functors $\Fun(\Delta^{op},\Sets)$. A simplicial set $X$ will often be written as $X_\bullet$, and $X_n := X([n])$ denotes the ``set of $n$-simplices of $X$."
\item We denote by $\triangle[m]$ the simplicial set $\Delta(-,[m])$. Under the Yoneda lemma, this is the functor represented by the object $[m]$.\footnote{Note that we use the Greek letter $\Delta$ to denote the category and the triangle symbol $\triangle$ to denote a simplex.}
\item Let $\csets$ denote the category {\em co}simplicial sets $\Fun(\Delta,\Sets)$. A cosimplicial set $Y$ will often be written as $Y^\bullet$, and $Y^n := Y([n])$ denotes the ``set of $n$-cosimplices of $Y$."
\item Let $cs\!\Sets$ denote the category of cosimplicial simplicial sets $\Fun(\Delta \times \Delta^{op},\Sets)$. We will often denote a cosimplicial simplicial set $Z$ by $Z^\bullet_\bullet$, and $Z^n_\bullet$ is the simplicial set of $n$-cosimplices and $Z^\bullet_n$ is the cosimplicial set of $n$-simplices.
\item By $\triangle^n$ we mean the standard $n$-simplex in $\RR^n$.
\item To indicate the end of an example or remark, we use the symbol $\Diamond$, just as we use $\Box$ to indicate the end of a proof.
\end{itemize}

\section{The category of test objects}

Let $\Man$ denote the category of smooth, finite-dimensional manifolds (without boundary) and smooth morphisms. From here on, {\em manifold} will mean smooth and finite-dimensional. 

We ``enlarge'' the category $\Man$ by allowing a certain class of structure sheaves following ideas and constructions of Costello \cite{Cos1}, \cite{WG2}. 
 
\begin{definition}
On a smooth manifold $M$, let 
\[
\Sym(T^*_M[-1]) = \bigoplus_{n = 0}^{\dim M} \bigwedge^n T^*_M [-n]
\] 
denote the $\ZZ$-graded vector bundle whose smooth sections are the differential forms on $M$ (with their usual cohomological degree). Note that $\Sym(T^*_M[-1])$ is a graded-commutative algebra in the category of vector bundles on $M$ by (fiberwise) wedge product. This product induces the wedge product on its smooth sections, namely the wedge product of differential forms.
\end{definition}

\begin{definition}
A {\em nilpotent dg manifold} $\cM$ is a triple $(M, \sO_\cM, \sI_\cM)$ consisting of the following data and conditions.
\begin{enumerate}
\item[(1)] A manifold $M$. 
\item[(2)] A $\ZZ$-graded vector bundle $A \to M$ of total finite rank equipped with 
\begin{enumerate}
\item[a)] the structure of a module over $\Sym(T^*_M[-1])$;
\item[b)] a fiberwise multiplication map $m: A \ot A \to A$ making $A$ into a unital graded-commutative algebra in vector bundles on $M$,\footnote{That is, $m$ is a map of vector bundles from the Whitney tensor product $A \ot A$ to $A$.} so that $(A,m)$ is an algebra over $\Sym(T^*_M[-1])$;
\item[c)] there is a short exact sequence of vector bundles $I \hookrightarrow A \to A/I \cong \CC \times M$ (i.e., the quotient is the trivial rank 1 bundle) that respects the multiplication $m$ and an associated chain of vector bundles
\[
0 =I^{n+1} \subset I^n \subset I^{n-1} \subset \cdots \subset I
\]
compatible with multiplication (i.e., $I^k \cdot I^\ell \subset I^{k+\ell}$). That is, $I$ forms a nilpotent ideal in $A$.
\end{enumerate}
Let $\sO_\cM$ denote the sheaf of smooth sections of the algebra bundle $A$. We equip $\sO_\cM$ with a derivation of cohomological degree 1 so that it is a sheaf of unital commutative dg $\Omega^*_M$-algebras.
\item[(3)] A map $q: \sO_\cM \to \cinf_M$ of sheaves of $\Omega^*_M$-algebras whose kernel is the sheaf $\sI_\cM$ of smooth sections of $I$. This sheaf $\sI_\cM$ forms a nilpotent dg ideal.\footnote{We will generally suppress the map $q$ from notation as it is nearly always obvious from context.}
\item[(4)] We require the cohomology of $\sO_\cM(U)$ to be concentrated in non-positive degrees for sufficiently small open sets $U \subset M$.
\end{enumerate}
\end{definition}

Note that the conditions on the sheaf $\sO_\cM$ imply that its global cohomology $\sO_\cM(M)$ lives in finitely many cohomological degrees.  We call $\sO_\cM$ the {\em structure sheaf} of the nilpotent dg manifold $\cM$.

\begin{definition}
Let $\cN$ be a nilpotent dg manifold with underlying manifold $N$ and graded vector bundle $B$, and let $\cM$ be a nilpotent dg manifold with underlying manifold $M$ and graded vector bundle $A$. A {\em map of nilpotent dg manifolds} $F: \cN \to \cM$ is a pair $(f, \phi)$ where $f$ is a smooth map from $N$ to $M$ and $\phi$ is a map of graded vector bundles from the pullback bundle $f^{-1} A$ to $B$ such that there is a commuting diagram of commutative dg $f^{-1}\Omega_M$-algebra sheaves (on $N$) 
\[
\xymatrix{
f^{-1}\sO_\cM \ar[r]^\phi \ar[d] & \sO_\cN \ar[d] \\
f^{-1} \cinf_M \ar[r] & \cinf_N
}.
\]
(In other words, the map of bundles is compatible with the graded algebra structure on the bundles, the differentials on the sheaf of sections, and the filtrations by the nilpotent dg ideal sheaves.)
\end{definition}

There is one last construction that we will need.

\begin{definition} 
Let $\cM = (M, \sO_\cM)$ and $\cN = (N, \sO_\cN)$ be nilpotent dg manifolds. The \emph{product} $\cM \times \cN$ is the nilpotent dg manifold $(M \times N, \sO_{\cM \times \cN}, \sI_{\cM \times \cN})$, where 
\[
\sO_{\cM \times \cN} := \lim_{\stackrel{\longleftarrow}{i,j}} \left(\pi_M^{-1} \sO_\cM/\sI_\cM^i \right) \otimes_{\Omega^*_{M\times N}} \left(\pi_N^{-1} \sO_\cN/\sI_\cN^j\right)
\]
equipped with the nilpotent ideal
\[
\sI_{\cM \times \cN} = \left(\pi_M^{-1} \sI_\cM  \otimes_{\Omega^*_{M\times N}} \pi_N^{-1} \sO_\cN \right) \oplus \left(\pi_M^{-1} \sO_\cM\otimes_{\Omega^*_{M\times N}} \pi_N^{-1} \sI_\cN\right).
\]
\end{definition}

This definition arises by requiring the natural compatibility with the filtrations.

\begin{remark}
Our structure sheaves are always given as sections of graded vector bundles, equipped with extra structure, although it might seem more natural to work immediately with sheaves. This vector bundle condition appears in large part to ensure compatibility with the quantum field theory constructions in \cite{Cos1}.  In our future work, \cite{GGqft}, we use the present results to construct effective field theories and quantization in families, where the parameterizing spaces are exactly the nilpotent dg manifolds we just defined.
\end{remark}

\subsection{Relation to smooth manifolds, formal geometry, complex geometry, and foliations}

To give a sense of this new category $\dgMan$ of nilpotent dg manifolds, we now describe some important examples.

\begin{example}
For $M$ a smooth manifold, the nilpotent dg manifold $(M,\Omega^*_M)$ is known as the {\em de Rham space} of $M$ and is denoted by $M_{dR}$. By definition, ever other nilpotent dg manifold $\cM$ with $M$ as the underlying smooth manifold possesses a distinguished map $\cM \to M_{dR}$. 
\hfill $\Diamond$ 
\end{example}

\begin{remark}
There are several ways to think about the role of $M_{dR}$. Observe that the simplest natural sheaf of algebras to put on $M$ is $\CC_M$, the locally constant sheaf, but this sheaf is not soft and not well-suited to the techniques of differential geometry. The de Rham complex is then a pleasant replacement for $\CC_M$, since it locally recovers $\CC_M$ as its cohomology. Note that the structure sheaves of nilpotent dg manifolds are algebras over the de Rham complex, and thus well-behaved replacements for algebras over the constant sheaf. 

Another aspect is more categorical in nature. If we were to define a category of $\sO$-modules over $M_{dR}$ (although we will not develop such a formalism here), it should be equivalent to the category of $D$-modules on $M$. Heuristically, this relationship is clearest when one thinks about vector bundles on $M_{dR}$: these are essentially vector bundles with flat connection on the smooth manifold $M$. The role of $M_{dR}$ is to provide the natural space over which live all ``things with a flat connection over $M$" (or more accurately, things with a system of differential equations over $M$). Indeed, we view the work of Block and collaborators \cite{Block}, \cite{BBandBlock}, \cite{BlockSmith} as an approach to this question. (Works of Kapranov \cite{Kapranov} and of Simpson-Teleman \cite{SimpsonTeleman} are also highly relevant here.)

Finally, in lemma \ref{lem:dR}, we indicate how this notion of de Rham space connects with the de Rham space in algebraic geometry. 
\hfill $\Diamond$ 
\end{remark}

\begin{example}
Let $M$ be a smooth manifold. Then $(M,\cinf_M)$ is a nilpotent dg manifold where $\cinf_M$ is viewed as an $\Omega^\ast_M$ module by the quotient map $\Omega^*_M \to \Omega^0_M$ whose kernel is the differential ideal generated by the 1-forms. We denote it by $M_{sm}$.
\hfill $\Diamond$ \end{example}

It is straightforward to see the following, which explains why $\dgMan$ is an enlargement of the natural test objects for smooth geometry.

\begin{prop}
The inclusion functor $i: \Man \to \dgMan$ where $M \mapsto M_{sm} = (M,\cinf_M)$ is fully faithful.
\end{prop}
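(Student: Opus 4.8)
The plan is to unwind the definition of a morphism in $\dgMan$ between two objects of the form $M_{sm}$ and to observe that the data collapses to exactly a smooth map. Recall that $i$ sends a smooth map $f\colon M \to N$ to the pair $(f, f^*)$, where $f^*$ denotes the canonical pullback of smooth functions, viewed as a map of sheaves $f^{-1}\cinf_N \to \cinf_M$. Since pullback is contravariantly functorial (so $(g\circ f)^* = f^* \circ g^*$) and sends identities to identities, this does define a functor, and it remains only to check that the induced map $\Man(M,N) \to \dgMan(M_{sm}, N_{sm})$ is a bijection for all $M$ and $N$.

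Faithfulness is immediate: the underlying smooth map $f$ is literally part of the data of the morphism $i(f) = (f, f^*)$, so it can be read off, and hence $i(f) = i(g)$ forces $f = g$.

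For fullness I would take an arbitrary morphism $(f, \phi)\colon M_{sm} \to N_{sm}$ and argue that $\phi$ is necessarily $f^*$. The key observation is that for any object $P_{sm}$ the structure sheaf is $\cinf_P$, the defining bundle is the trivial rank-one bundle $\CC \times P$ concentrated in degree $0$, the nilpotent ideal $\sI$ vanishes, and consequently the quotient map $q\colon \sO_{P_{sm}} \to \cinf_P$ is the identity. Feeding this into the commuting square that defines a morphism, both vertical maps become identities while the bottom horizontal edge is the canonical pullback $f^*\colon f^{-1}\cinf_N \to \cinf_M$ determined by $f$; commutativity of the square therefore reads $\phi = f^*$. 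Thus $(f, \phi) = (f, f^*) = i(f)$, so every morphism lies in the image of $i$, and moreover the bijection is compatible with composition by functoriality of pullback.

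The only genuine content, and the step to execute with care, is the variance bookkeeping in the commuting square: one must keep straight that the underlying smooth map of a morphism $M_{sm} \to N_{sm}$ runs $M \to N$ while the accompanying sheaf map $\phi$ runs $f^{-1}\cinf_N \to \cinf_M$, and that the bottom edge of the defining square is precisely the pullback induced by $f$. Once the directions are pinned down, verifying that $f^*$ is a legitimate map of graded algebra bundles—respecting the (trivial) differential, the commutative algebra structure, and the (empty) filtration by the nilpotent ideal—is routine, so no real obstacle remains. This is exactly why the statement can fairly be flagged as straightforward.
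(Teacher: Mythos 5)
Your argument is correct and is exactly the routine verification the paper omits (it states the proposition without proof, calling it straightforward); unwinding the defining commuting square so that the identity vertical maps force $\phi = f^*$ is the same technique the paper itself uses in its proof of the adjacent proposition on $\dgMan(N_{dR}, M_{sm})$. Your closing remark that one must still check $f^*$ respects the $\Omega^*$-algebra structures (i.e., that pulling back forms and then quotienting to $0$-forms agrees with quotienting and then pulling back functions) is the right point to flag, and it does hold.
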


In $\dgMan$, the de Rham space of a smooth manifold $M$ represents the set of constant maps.  More precisely we have the following.

\begin{prop} Let $N$ and $M$ be  smooth manifolds.  The set of maps $\dgMan (N_{dR} , M_{sm})$ is in bijection with the underlying set of $M$ (viewed as the constant maps from $N$ to $M$).
\end{prop}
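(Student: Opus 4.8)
The plan is to unwind the definition of a map of nilpotent dg manifolds and show that such a map is forced to be (locally) constant, with all of its algebraic data determined by the underlying smooth map. Write $F = (f,\phi): N_{dR} \to M_{sm}$, so that $f: N \to M$ is smooth and $\phi$ is a degree-zero map of graded vector bundles covering $f$, inducing on sections a map $\phi: f^{-1}\sO_{M_{sm}} \to \sO_{N_{dR}}$ of sheaves of commutative dg $f^{-1}\Omega^*_M$-algebras. Here $\sO_{M_{sm}} = \cinf_M$ is concentrated in degree $0$ with zero differential, while $\sO_{N_{dR}} = \Omega^*_N$ carries the de Rham differential. The first move is a degree count: since $\phi$ preserves cohomological degree and $\cinf_M$ sits in degree $0$, the image of $\phi$ lands in $\Omega^0_N = \cinf_N$. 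Consulting the defining commutative square --- whose left vertical arrow is the identity of $f^{-1}\cinf_M$ (the ideal of $M_{sm}$ being zero) and whose right vertical arrow is the projection $\Omega^*_N \to \cinf_N$ --- one reads off that $\phi(g) = f^* g = g \circ f$ for every local section $g$. In particular $\phi$ carries no information beyond $f$, so the assignment $F \mapsto f$ is injective.

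Next I would extract the constraint on $f$ from the requirement that $\phi$ commute with differentials. Since the differential on $\cinf_M$ vanishes, the chain-map condition reads $d_{dR}(\phi(g)) = \phi(0) = 0$, that is, $d(f^* g) = 0$ for all $g \in \cinf_M$. Thus $g \circ f$ is a closed $0$-form, hence locally constant, for every smooth $g$ on $M$. A standard separation argument (if $f(p) \neq f(q)$ for $p,q$ in one component, choose $g$ separating the two images to contradict local constancy) then forces $f$ itself to be locally constant. Assuming $N$ connected, this means $f \equiv x_0$ for a single point $x_0 \in M$; I would flag that for disconnected $N$ the identical argument instead produces a bijection with $M^{\pi_0(N)}$, so the statement as phrased tacitly takes $N$ connected. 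The same conclusion can be reached from $f^{-1}\Omega^*_M$-linearity of $\phi$: applying $\phi$ to $\omega \cdot 1$ for $\omega$ of positive degree and using $\phi(1) = 1$ gives $f^* \omega = 0$, which again kills $f^*(dg)$.

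For the converse I would verify that each point genuinely defines a map. Given $x_0 \in M$, set $f \equiv x_0$ and $\phi(g) := g(x_0)$, the corresponding constant function on $N$. It then remains to check that this $\phi$ is a unital map of graded algebras, is $f^{-1}\Omega^*_M$-linear (here the pullback of any positive-degree form along the constant map $f$ vanishes, matching the fact that positive forms act by zero on $\cinf_M$), commutes with the differentials (both sides vanish, as $\phi(g)$ is constant), and fits into the required commutative square. These verifications are routine, and together with the injectivity above they establish the claimed bijection, with $x_0 \in M$ corresponding to the constant map at $x_0$.

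I expect the only real subtlety to be bookkeeping: making sure the degree-zero constraint together with the commuting square pins $\phi$ down completely as $f^*$, and then isolating the precise condition --- closedness of $f^* g$, equivalently $f^*$ annihilating positive-degree forms --- that forces constancy. The connectedness hypothesis on $N$ is the one genuine assumption that must be made explicit, since without it the morphism set is strictly larger.
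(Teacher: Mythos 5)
Your proof is correct and follows essentially the same route as the paper's: the commuting square together with the degree-zero constraint forces $\phi = f^*$, and compatibility with the trivial differential on $\cinf_M$ forces $d(f^*g)=0$ for every $g$, hence constancy of $f$. Your added bookkeeping --- the explicit surjectivity check and the caveat that disconnected $N$ actually yields a bijection with $M^{\pi_0(N)}$ --- are correct refinements of points the paper leaves implicit.
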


\begin{proof}
A map of nilpotent dg manifolds $N_{dR} \to (M, \cinf_M)$ consists of a smooth map $f : N \to M$ and a map of sheaves of dg $f^{-1} \Omega_M$ algebras $\phi : f^{-1} \cinf_M \to \Omega^\ast_N$ such that

\[
\xymatrix{
f^{-1}\cinf_M \ar[r]^\phi \ar[d]_{\mathrm{Id}} & \Omega^\ast_N \ar[d] \\
f^{-1} \cinf_M \ar[r] & \cinf_N
}
\]
commutes. In particular, the map $\phi$ commutes with differentials.  The differential on the structure sheaf $(M, \cinf_M)$ is trivial, and hence we see that the pullback of a smooth function on $M$ must be a constant function.
\end{proof}

\begin{remark}
By definition, every nilpotent dg manifold $\cM = (M, \sO_\cM)$ ``lives between" $M_{sm}$ and $M_{dR}$ in the sense that we have canonical maps
\[
M_{sm} \to \cM \to M_{dR},
\]
where the underlying map of manifolds is the identity and the algebra maps are provided by definition.
\hfill $\Diamond$ \end{remark}

\begin{example}\label{ex:artinian}
Let $R$ be an artinian algebra over $\CC$, such as the {\em dual numbers} $\CC[\epsilon]/(\epsilon^2)$. Then $(\pt, R)$ is a nilpotent dg manifold.
\hfill $\Diamond$ \end{example}

Let $\Art_\CC$ denote the category of artinian algebras over $\CC$. The opposite category $\Art_\CC^{op}$ encodes ``fat points." It's easy to see that as the underlying manifold is just a point we have a full and faithful embedding. 

\begin{prop}
The $\Spec$ functor 
\[
\begin{array}{ccc}
\Art^{op}_\CC & \to & \dgMan \\
A & \mapsto & \Spec A = (\pt, A)
\end{array}
\]
is fully faithful.
\end{prop}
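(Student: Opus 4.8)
The plan is to unwind both sides of the claimed bijection and to observe that over a point all of the differential-geometric data degenerates into algebra. Fixing augmented artinian algebras $A$ and $B$, full faithfulness amounts to showing that for all such $A,B$ the assignment
\[
\Art^{op}_\CC(A,B) = \Art_\CC(B,A) \longrightarrow \dgMan(\Spec A, \Spec B),
\]
sending an algebra homomorphism $\psi\colon B \to A$ to the pair $(\mathrm{id}_\pt, \psi)$, is a bijection. This is the same strategy as the preceding proposition identifying $\dgMan(N_{dR},M_{sm})$ with constant maps: fix the source and target and chase the definition of a morphism.

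First I would analyze the target. By definition a map of nilpotent dg manifolds $\Spec A \to \Spec B$ is a pair $(f,\phi)$ with $f\colon \pt \to \pt$ smooth; since $\pt$ admits a unique self-map, $f = \mathrm{id}_\pt$ carries no information. Because $\Omega^*_\pt = \CC$ is concentrated in degree $0$ and a graded vector bundle over $\pt$ is just a graded vector space, the remaining datum $\phi$ is a map of graded vector spaces $B \to A$ and the ambient $f^{-1}\Omega^*_M$-algebra structure is merely a $\CC$-algebra structure. The defining commuting square
\[
\xymatrix{
B \ar[r]^\phi \ar[d] & A \ar[d] \\
\CC \ar[r]^{\mathrm{id}} & \CC
}
\]
in which the vertical maps are the augmentations $q$, then says precisely that $\phi$ is a homomorphism of augmented commutative dg $\CC$-algebras: it respects products, units, the (here vanishing) differentials, and the augmentations.

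It remains to match this datum with a morphism in $\Art_\CC$, which is exactly an augmented $\CC$-algebra homomorphism $\psi\colon B \to A$. Injectivity of the assignment is immediate, since $(\mathrm{id}_\pt,\psi) = (\mathrm{id}_\pt,\psi')$ forces $\psi = \psi'$. For surjectivity I would check that any $\phi$ arising as above automatically satisfies the remaining clause in the definition of a $\dgMan$-morphism, namely compatibility with the nilpotent filtrations given by the powers $I_A^\bullet$ and $I_B^\bullet$ of the augmentation ideals: commutativity with the augmentations gives $\phi(I_B) \subseteq I_A$, and multiplicativity then yields $\phi(I_B^k) \subseteq I_A^k$ for all $k$. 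Hence every such $\phi$ is genuinely a morphism of $\Art_\CC$, and the displayed map is onto.

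The argument is essentially a definition chase, so there is no deep obstacle; the one point that warrants care is exactly this last filtration compatibility. I would make sure that ``respecting the filtration by the nilpotent dg ideal'' imposes nothing beyond being an augmented algebra map, which follows from the $I$-adic description of the filtration on an artinian algebra together with the fact that $I$ is the augmentation ideal $\ker q$. With this verified, the assignment is a bijection for every $A$ and $B$, and therefore $\Spec$ is fully faithful.
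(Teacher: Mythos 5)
Your proof is correct and is exactly the definition chase the paper has in mind: the paper offers no written proof beyond remarking that ``as the underlying manifold is just a point we have a full and faithful embedding,'' and your unwinding (unique self-map of the point, $\Omega^*_\pt = \CC$, augmentation square, automatic filtration compatibility from $\phi(I_B)\subseteq I_A$ and multiplicativity) supplies precisely the omitted details.
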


We can also include with dg artinian algebras concentrated in nonpositive degrees, such as the {\em shifted dual numbers} $\CC[\epsilon]/(\epsilon^2)$ where $\epsilon$ has negative cohomological degree. Thus, $\dgMan$ includes the basic ingredient of {\em derived} deformation theory. 

This proposition also indicates one way that formal deformation theory will relate to $\dgMan$, since we can fatten any manifold this way.  For any smooth manifold $M$, we can ``thicken" $M$ by a ``$\Spec R$-bundle'' to obtain an interesting nilpotent dg manifold. (That is, let the structure sheaf be sections of an $R$-bundle over $M$.) 

\begin{example}
Given a complex structure on a smooth manifold $M$, there is a nilpotent dg manifold encoding the complex manifold, namely $(M,\Omega^{0,\ast}_M)$, where $\Omega^{0,\ast}_M$ is the Dolbeault complex for this complex structure. We view $\Omega^{0,\ast}_M$ as the quotient of $\Omega^\ast_M$  by the differential ideal generated by the $(1,\ast)$-forms.
\hfill $\Diamond$ \end{example}

\begin{prop}
The inclusion functor is a fully faithful embedding from the category of complex manifolds into $\dgMan$.
\end{prop}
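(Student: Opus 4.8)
The plan is to show that for complex manifolds $N$ and $M$ the holomorphic maps $N \to M$ are in natural bijection with the maps of nilpotent dg manifolds $(f,\phi) : (N,\Omega^{0,\ast}_N) \to (M,\Omega^{0,\ast}_M)$, and that this bijection is functorial. Recall that such a map consists of a smooth map $f : N \to M$ together with a map $\phi : f^{-1}\Omega^{0,\ast}_M \to \Omega^{0,\ast}_N$ of sheaves of dg $f^{-1}\Omega^\ast_M$-algebras fitting into the defining commuting square, in which the vertical quotient maps project the Dolbeault complexes onto their degree-zero parts $\cinf_M$ and $\cinf_N$ (smooth $\CC$-valued functions) and the bottom map is pullback of functions along $f$. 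The strategy is to prove that $f$ is necessarily holomorphic and that $\phi$ is necessarily the pullback $f^\ast$ of Dolbeault forms, and conversely that every holomorphic map gives rise to exactly one such $\phi$.

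For the forward direction, suppose $f$ is holomorphic. Then $f^\ast$ carries $(0,q)$-forms to $(0,q)$-forms---this is precisely the statement that $f$ is holomorphic---so $f^\ast$ defines a map $f^{-1}\Omega^{0,\ast}_M \to \Omega^{0,\ast}_N$. Since pullback of forms is always a map of graded-commutative algebras and is $f^{-1}\Omega^\ast_M$-linear, and since for holomorphic $f$ one has $f^\ast\bar\partial = \bar\partial f^\ast$, the map $\phi := f^\ast$ is a map of dg $f^{-1}\Omega^\ast_M$-algebras. It preserves cohomological degree, hence preserves the filtration by the nilpotent ideals (the positive-degree parts), and it is compatible with the quotient to functions. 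Thus $(f,f^\ast)$ is a map of nilpotent dg manifolds, and functoriality follows from $(g\circ f)^\ast = f^\ast\circ g^\ast$.

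For the reverse direction---the heart of the argument---let $(f,\phi)$ be an arbitrary map. First, because $\phi$ is an algebra map of cohomological degree $0$, in degree zero it sends $f^{-1}\cinf_M$ to $\cinf_N$, and the defining commuting square identifies this restriction with the bottom map, so $\phi^0(g) = g\circ f$ for every local smooth function $g$ on $M$. Next I use that $\phi$ commutes with the differentials: if $w$ is a local holomorphic coordinate on $M$, then $\bar\partial w = 0$, so $0 = \phi(\bar\partial w) = \bar\partial\phi^0(w) = \bar\partial(w\circ f)$, which says $w\circ f$ is holomorphic. As $f$ pulls back every local holomorphic function to a holomorphic function, $f$ is holomorphic. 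Finally, writing a local $(0,q)$-form on $M$ as $\sum_I g_I\, d\bar w_{i_1}\wedge\cdots\wedge d\bar w_{i_q}$ with $d\bar w_j = \bar\partial\bar w_j$, the $\bar\partial$-compatibility and multiplicativity of $\phi$ force $\phi(d\bar w_j) = \bar\partial\phi^0(\bar w_j) = \bar\partial(\overline{w_j\circ f}) = f^\ast(d\bar w_j)$ and hence $\phi = f^\ast$ on all such forms. Locality of sheaf maps lets these local determinations glue, so $\phi$ is uniquely $f^\ast$.

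The main obstacle is this reverse direction, and within it the precise point that the $\bar\partial$-compatibility of $\phi$ is equivalent to holomorphicity of $f$; this rests on the standard characterization of holomorphic maps as those smooth maps that pull back holomorphic functions to holomorphic functions (equivalently, those whose pullback respects the $(p,q)$-bigrading). A secondary technical point to handle with care is the interaction with the $f^{-1}\Omega^\ast_M$-module structures and the requirement that the local computations in holomorphic coordinates patch together, which follows from the naturality and locality of pullback. (That $(M,\Omega^{0,\ast}_M)$ is genuinely a nilpotent dg manifold---in particular condition (4)---uses the $\bar\partial$-Poincar\'e lemma to see that the Dolbeault cohomology of a small polydisc is concentrated in degree zero, but this is needed only for well-definedness of the functor, not for full faithfulness.)
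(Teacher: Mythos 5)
Your argument is correct. In fact the paper states this proposition without proof, so there is nothing to compare against directly; your reverse direction is exactly in the spirit of the paper's proof that $\dgMan(N_{dR}, M_{sm})$ consists of constant maps, namely using the commuting square to pin down $\phi$ in degree zero as $g \mapsto g \circ f$ and then using compatibility with the differentials ($\phi \circ \bar\partial = \bar\partial \circ \phi$ applied to local holomorphic coordinates) to force holomorphicity of $f$, after which multiplicativity and $\bar\partial$-compatibility determine $\phi$ on the local algebra generators $d\bar w_j$ and hence everywhere. The only points worth stating slightly more carefully are that $\Omega^{0,\ast}$ is locally generated as an algebra over its degree-zero part by the $d\bar w_j = \bar\partial \bar w_j$, and that compatibility with the filtration by powers of the nilpotent ideal follows from $f^\ast$ being a degree-preserving algebra map; you gesture at both and neither presents any difficulty.
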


\begin{example}
Let $\cF$ be a (regular) foliation of $M$. Equivalently, let $\rho: T_{\cF} \hookrightarrow T_M$ be a subbundle of the tangent bundle that is \emph{integrable}: the Lie bracket of any two sections of $T_{\cF}$ is always a section of $T_{\cF}$. Thus, $\cF$ provides a Lie algebroid $T_{\cF}$, and a standard construction for the theory of Lie algebroids then provides a nilpotent dg manifold, as follows. (See, for example, \cite{Rinehart}, \cite{Mackenzie}, \cite{MehtaCoh}, \cite{AbadCrainic}.)

The \emph{Chevalley-Eilenberg cochain complex} of $T_{\cF}$ is a sheaf of commutative dg algebras determined by the Lie algebroid. We denote it $C^* T_{\cF}$. The underlying sheaf of graded algebras $C^\sharp T_{\cF}$ is given by the smooth sections of the bundle $\Sym(T^*_{\cF}[-1]))$, and multiplication is the pointwise wedge product. Hence it is a graded algebra over $C^\infty$, but we equip it with a differential that is not $C^\infty$-linear.\footnote{The differential is determined by the Lie bracket, which is not $C^\infty$-linear.} The differential $d$ is determined by the following conditions. First, for any function $f$, viewed as an element of $C^0 T_{\cF}(M)$, we have that
\[
d(f)(X) = \rho(X)(f) 
\]
for every $X \in \Gamma(M,T_{\cF})$. Second for any $\alpha \in C^1 T_{\cF}(M)$,
\[
d(\alpha)(X \wedge Y) = \rho(X)(\alpha(Y)) - \rho(Y)(\alpha(X)) - \alpha([X,Y]),
\]
for all $X, Y \in \Gamma(M,T_{\cF})$. Third, we require that $d^2 = 0$ and 
\[
d(\alpha \wedge \beta) = (d\alpha) \wedge \beta + (-1)^{\alpha} \alpha \wedge (d\beta)
\]
for all elements $\alpha$ and $\beta$.\footnote{This construction is a systematic generalization of the de Rham complex: when $T_{\cF} = T_M$, the Chevalley-Eilenberg complex is precisely $\Omega^\ast(M)$.} Observe that for $U$ an open on which the foliation decomposes as $U \cong \RR^p \times \RR^{\dim M - p}$, where the leaves are codimension $p$, then
\[
H^k(C^* T_{\cF}(U)) = \begin{cases} C^\infty(\RR^p), & k = 0 \\ 0, \text{else} \end{cases}.
\]
(This is a direct consequence of the usual Poincar\'e lemma.) Note that $C^* T_{\cF}$ has a nilpotent dg ideal given by $C^{\geq 1} T_{\cF}$. Lastly, note that $C^* T_{\cF}$ is a dg $\Omega^*_M$-algebra -- indeed a quotient algebra -- via the algebra map determined by the dual to the anchor map $\rho^\ast : T^*_M \to T^*_{\cF}$.

Let $M_{\cF}$ denote the nilpotent dg manifold $(M, C^* T_{\cF})$. It provides a dg manifold describing the ``derived leaf space'' of the foliation, as the Lie algebroid cohomology is precisely the derived functor for taking invariants of functions along leaves.

This construction encompasses several earlier examples: when $T_\cF = 0$, we recover $M_{sm}$; when $T_{\cF} = T_M$, we recover $M_{dR}$; and when $M$ is a complex manifold, $M_{\delbar}$ is associated to the foliation given by $T^{0,1}_M$.
\hfill $\Diamond$ 
\end{example}

\begin{prop}
The functor from the category of regular foliations to $\dgMan$ sending $\cF$ to $M_\cF$ is a fully faithful embedding.
\end{prop}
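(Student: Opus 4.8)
The plan is to fix the morphisms in the source category and then show that the functor induces a bijection on Hom-sets. I take a morphism of regular foliations $(N,\cG)\to(M,\cF)$ to be a smooth map $f\colon N\to M$ carrying leaves into leaves, i.e. one with $df(T_\cG)\subseteq f^*T_\cF$ as subbundles of $f^*T_M$; this convention is the one consistent with the special cases already recorded, since for $\cG=T_N,\,\cF=0$ it recovers the constant maps of the proposition computing $\dgMan(N_{dR},M_{sm})$, and for $\cF=T_M$ it recovers all smooth maps (as it must for $M_{dR}$). Throughout I write $\rho_\cF^*\colon\Omega^*_M\to C^*T_\cF$ and $\rho_\cG^*\colon\Omega^*_N\to C^*T_\cG$ for the dual-anchor maps, which the text records as maps of dg $\Omega^*$-algebras; I will use that in degree $0$ they are the identity on functions, and that in degree $1$ the bundle surjection $T^*_M\twoheadrightarrow T^*_\cF$ (dual to $T_\cF\hookrightarrow T_M$) induces a surjection on sections $\Omega^1_M\twoheadrightarrow C^1T_\cF$ with kernel the global $1$-forms annihilating $T_\cF$.

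The heart of the argument is that a map of nilpotent dg manifolds $(f,\phi)\colon N_\cG\to M_\cF$ is completely determined by $f$, and that the mere existence of $\phi$ forces $f$ to be foliate. First, the commuting square with $\cinf$ in the definition of a morphism pins $\phi$ down in degree $0$ to be $f^*$, since $C^0T_\cF=\cinf_M$ and the quotient to $\cinf$ is the identity there. Next, because $\phi$ must be a map of $f^{-1}\Omega^*_M$-algebras, applying $f^{-1}\Omega^*_M$-linearity to the unit gives $\phi(\rho_\cF^*\omega)=\rho_\cG^*(f^*\omega)$ for every $\omega\in\Omega^*_M$; since $\rho_\cF^*$ is surjective onto $C^1T_\cF$ and $C^*T_\cF$ is generated over $\cinf_M$ by $C^0$ and $C^1$, this determines $\phi$ everywhere in terms of $f$. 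The key point is well-definedness of the prescription $\rho_\cF^*\omega\mapsto\rho_\cG^*(f^*\omega)$: it holds exactly when $\ker(\rho_\cF^*\colon\Omega^1_M\to C^1T_\cF)\subseteq\ker(\rho_\cG^*\circ f^*)$, i.e. when every $1$-form annihilating $T_\cF$ pulls back to one annihilating $T_\cG$. A fiberwise computation, using that any conormal covector extends to a global section of $\mathrm{Ann}(T_\cF)\subseteq T^*_M$, shows this is equivalent to $df(T_\cG)\subseteq f^*T_\cF$ — precisely the condition that $f$ be a morphism of foliations. I expect this bundle-theoretic translation, matching the algebraic well-definedness of $\phi$ with the geometric foliate condition on $f$, to be the only real obstacle.

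It then remains to check that, for foliate $f$, the $\phi$ so defined is genuinely a map of nilpotent dg manifolds; the two assignments $(f,\phi)\mapsto f$ and $f\mapsto(f,\phi)$ are then manifestly mutually inverse. Filtration compatibility ($\phi$ preserves $C^{\geq k}$) and the $\cinf$-square are immediate because $\phi$ is a degree-preserving algebra map. The only substantive verification is that $\phi$ is a cochain map, $\phi d_\cF=d_\cG\phi$; since $\phi$ is an algebra map and the differentials are derivations, it suffices to check this on the generators $C^0$ and $C^1$. On a function one uses $d_\cF g=\rho_\cF^*(dg)$, and on $\alpha=\rho_\cF^*\omega$ one uses that $\rho_\cF^*,\rho_\cG^*$ are cochain maps commuting with the de Rham $d$ and with $f^*$; both sides collapse to $\rho_\cG^*(f^*d\omega)$. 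Faithfulness and fullness are then exactly injectivity and surjectivity of $f\mapsto(f,\phi)$, while injectivity on objects follows since $M$ and $T_\cF$ (through $C^1T_\cF=\Gamma(T^*_\cF)$) are recovered from $M_\cF$; everything past the foliate-condition lemma is bookkeeping with the generators-and-relations description of the Chevalley--Eilenberg algebra.
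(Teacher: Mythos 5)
The paper states this proposition without proof (as with the other embedding propositions in that section), so there is no argument of the authors' to compare yours against; your proposal supplies a complete and correct proof. The two load-bearing points are exactly the right ones: (i) the commuting square with $\cinf$ plus $f^{-1}\Omega^*_M$-linearity applied to the unit forces $\phi$ on $C^0$ and on the image of $\rho^*_\cF$, and since $\Omega^1_M \twoheadrightarrow C^1 T_\cF$ is a surjection of sections of vector bundles and $C^* T_\cF$ is generated in degrees $0$ and $1$, this pins $\phi$ down entirely; (ii) well-definedness of $\rho^*_\cF\omega \mapsto \rho^*_\cG(f^*\omega)$ is, by the pointwise extension argument for conormal covectors, equivalent to $df(T_\cG)\subseteq f^*T_\cF$, so the existence of $\phi$ is exactly the foliate condition on $f$. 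The cochain-map verification on generators using that the dual anchors are dg algebra maps is also fine. Two small remarks. First, since the paper never defines the category of regular foliations, your choice of morphisms is doing real work; it is the standard one and your sanity checks against $M_{dR}$ and $M_{sm}$ are the right ones to run, though note that in the case $\cG=T_N$, $\cF=0$ your condition gives $df=0$, i.e.\ \emph{locally} constant maps, which matches what the paper's own proof of the $\dgMan(N_{dR},M_{sm})$ computation actually establishes (the paper's statement implicitly assumes $N$ connected). Second, for the converse direction (every foliate $f$ yields a $\phi$) it is marginally cleaner to observe that $df\colon T_\cG\to f^{-1}T_\cF$ dualizes to a bundle map $f^{-1}T^*_\cF\to T^*_\cG$ whose exterior powers give the graded algebra map directly, rather than arguing by generators and relations; but your version is correct as written.
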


\subsection{A notion of weak equivalence}

Costello introduces an interesting notion of weak equivalence between nilpotent dg manifolds. His notion relies on the existence of a natural filtration on the structure sheaf $\sO_\cM$ of a nilpotent dg manifold $\cM$. In particular, let $\sI_\cM = \ker q$ denote the sheaf of nilpotent dg ideals in $\sO_\cM$. We have the filtration
\[
F^k \sO_\cM = \sI^k_\cM.
\] 
Let $\Gr \sO_\cM$ denote the associated graded dg algebra.

\begin{definition}
A map $F: \cM \to \cN$ in $\dgMan$ is a {\em weak equivalence} if
\begin{enumerate}
\item[(1)] the smooth map $f: M \to N$ is a diffeomorphism, and
\item[(2)] the map of commutative dg algebras $\Gr \phi: f^{-1}\Gr \sO_\cN \to \Gr \sO_\cM$ is a quasi-isomorphism.
\end{enumerate}
\end{definition}

This definition provides a well-behaved notion of weak equivalence because 
\begin{itemize}
\item every isomorphism of nilpotent dg manifolds (i.e., a diffeomorphism with a strict isomorphism of structure sheaves) is a weak equivalence, and 
\item the notion satisfies the 2-out-of-3 property because diffeomorphism and quasi-isomorphism do. 
\end{itemize}
Nonetheless, this definition might look a little strange because of the role played by the associated graded algebra. Observe, however, that it implies that $\phi$ is a quasi-isomorphism: $\phi$ preserves the natural filtration on the structure sheaves and hence induces a map of spectral sequences that is a quasi-isomorphism on the first page. Thus the definition is {\em stronger} than requiring that $\phi$ is a quasi-isomorphism, which might be the first definition that comes to mind. This stronger condition depends crucially on the filtration.

As further motivation for the definition, we note that the role of nilpotent dg manifolds here is supposed to parallel the role of artinian algebras in formal deformation theory.  Arguments in deformation theory often proceed by {\em artinian induction}: every local artinian $\CC$-algebra $(A,\frak{m})$ possesses a canonical tower of quotients
\[
A \to A/\frak{m}^n \to \cdots \to A/\frak{m}^2 \to A/\frak{m} \cong \CC,
\]
and at each step we extend an artinian algebra by a square-zero ideal so it suffices to prove some property holds for such small extensions
\[
I \hookrightarrow B \twoheadrightarrow A,
\]
where $I$ is the kernel of a ring map and $I^2 = 0$ inside $B$. The filtration on $\sO_\cM$ is our substitute for this canonical tower of quotients, and we will prove our main theorem by using a version of artinian induction.

\section{Derived stacks}

Our notion of ``derived stack" or ``derived space" will be, as usual, a kind of sheaf of simplicial sets on the site of ``test objects'' (cf., \cite{TV}). Thus we need to equip $\dgMan$ with a site structure.

Recall that the category $\Man$ has a site structure where a covering is simply an open cover in the usual sense. A {\em covering} of $\cM = (M,\sO_\cM)$ in $\dgMan$ is a collection $\{ \cU_i = (U_i, \sO_i)\}$ of nilpotent dg manifolds with maps $\{ F_i:  \cU_i \to \cM\}$ such that the collection $\{U_i\}$ forms a cover of $M$ in $\Man$ and the maps of structure sheaves $\phi_i: f^{-1}_i\sO_\cM \to \sO_i$ are isomorphisms.

\begin{definition}
A {\em derived stack} is a functor $\bX: \dgMan^{op} \to s\!\Sets$ satisfying
\begin{enumerate}
\item[(1)] $\bX$ sends weak equivalences of nilpotent dg manifolds to weak equivalences of simplicial sets, and
\item[(2)] $\bX$ satisfies \v{C}ech descent (see below for a reminder on what this means).
\end{enumerate} 
\end{definition}

Note that a derived stack is merely a homotopical kind of sheaf, and thus the definition does not capture any particularly {\em geometric} properties. For instance, we do not require $\bX$ to locally resemble a ringed space or orbifold. Our focus in this paper is on a class of examples, the $\L8$ spaces, that {\em do} have a very geometric flavor. (It would be interesting to work out the analogs of orbifold or Artin stack in this context.)

\begin{definition}
Let $\bX, \bY : \dgMan^{op} \to \ssets$ be derived stacks. A \emph{map of stacks} $\alpha: \bX \to \bY$ is just a natural transformation between the functors. A \emph{weak equivalence of stacks} is a map of stacks $\alpha$ such that $\alpha(\cM): \bX(\cM) \to \bY(\cM)$ is a weak equivalence for every nilpotent dg manifold $\cM$.
\end{definition}

Let $dSt$ denote the category of derived stacks. Hence, we will denote the morphisms from a derived stack $\bX$ to $\bY$ by $dSt(\bX,\bY)$. 

\subsection{\v{C}ech descent and homotopy sheaves}

We recall the usual notion of a sheaf before giving the souped-up version we need.

Let $X$ be a topological space and let ${\rm Opens}_X$ denote the poset category whose objects are opens in $X$ and whose morphisms are inclusions of opens. A {\em presheaf} on $X$ with values in the category $\sC$ is a functor $\cF: {\rm Opens}^{op}_X \to \sC$. A {\em sheaf} is a presheaf such that for any open $U$ and any cover $\fV = \{V_i\}$ of $U$, we have
\[
\cF(U) \cong \mathrm{eq} \left( \prod_i \cF(V_i) \rightrightarrows \prod_{i,j} \cF(V_i \cap V_j) \right),
\]
where $\mathrm{eq}$ denotes ``equalizer" and the two arrows are the two natural restriction maps for $\cF$.

In our setting, the value category $\sC$ is the category $s\!\Sets$, and we want to view two simplicial sets as the same if they are weakly equivalent. Thus, wherever we would ordinarily compute (co)limits, we should work with {\em homotopy} (co)limits instead. Moreover, we don't merely want to require agreement on overlaps; we want to coherently agree on overlaps-of-overlaps and so on.

These desires indicate how we should refine the notion of sheaf. 

\begin{definition}
Let $\cM = (M, \sO_\cM)$ be a nilpotent dg manifold. Let $\fV = \{V_i,  \; : \; i \in I\}$ be a cover for the underlying smooth manifold $M$. We also use $\fV$ to denote, abusively, the nilpotent dg manifold $\coprod_{i \in I} (V_i, \sO_\cM |_{V_i})$, given by disjoint union over the opens in the cover.  Let $\check{C}\fV_\bullet$ denote the simplicial nilpotent dg manifold whose $n$-simplices are
\[
\check{C}\fV_n := \fV \times_M \cdots \times_M \fV,
\]
where the fiber product is taken $n+1$ times, and the simplicial maps are the usual ones. We call $\check{C}\fV_\bullet$ the {\em \v{C}ech nerve} of the cover $\fV$.
\end{definition}

A homotopy sheaf will be a simplicial presheaf that satisfies homotopical descent for every such cover.

\begin{definition}
A simplicial presheaf $\cF$ on a nilpotent dg manifold $\cM$  is a {\em homotopy sheaf} if for every open $U$ of $M$ and every cover $\fV$ of $U$, we have
\[
\cF(U) \xrightarrow{\simeq} \holim_{\check{C}\fV}\,\cF,
\]
where $\simeq$ denotes weak equivalence of simplicial sets and $\holim$ denotes the homotopy limit.\end{definition}

We provide a concise introduction to homotopy limits, with a focus on the case of interest, in appendix \ref{app:holim}. 

\subsection{The road not (yet) taken}

So far we have given the basic skeleton of an approach to geometry, but much remains to be fleshed out. We wish here to point out a few possibilities that we find particularly interesting and then to explain some choices made in \cite{WG2}.

First, we only spell out categories with weak equivalences here, both of nilpotent dg manifolds and of derived stacks. Many constructions would undoubtedly work more easily if one carefully constructed simplicially-enriched or quasi-categories of these objects. (We expect that for doing serious work in this setting, there might be more geometrically natural ways of constructing these $\infty$-categories than taking the Dwyer-Kan localization.)

Second, it would be useful to construct categories of $\sO_X$-modules, quasi-coherent sheaves, and so on over these spaces. In general, there are many techniques and examples in derived algebraic geometry whose analogues would be very useful in our setting. For example, for applications to field theory, it would be nice to have stacks like the moduli of Riemann surfaces or the moduli of holomorphic $G$-bundles on a complex manifold.

Finally, we mention that nilpotent dg manifolds appear already in Costello's approach to quantum field theory, where he shows that renormalization and Feynman diagram computations behave well in families over nilpotent dg manifolds (see section 13 of chapter 2 of \cite{Cos1}). There, he relies crucially on the fact that any constant or linear terms in the action functional are a multiple of the nilpotent ideal. It would be interesting to see if one could modify that analysis to work over dg manifolds whose structure sheaves are cohomologically artinian (and not already artinian at the cochain level), as these test objects are possibly more natural from the derived geometry perspective.

\section{$\L8$ spaces}

In deformation theory, there is a governing, heuristic principle: every deformation functor is given by a dg Lie algebra.\footnote{This idea has a long history, which we will not trace here. See Hinich's paper \cite{Hinich} for one treatment of this idea that is quite close to what we do here. In \cite{LurieDAGX}, Lurie proves a theorem that makes this principle precise and connects it with global derived geometry. Hennion \cite{Hennion} extends Lurie's result to a relative context, working over a base derived Artin stack.} In other words, we can describe the ``formal neighborhood of a point in some space'' using Lie-theoretic constructions, rather than commutative algebra constructions. (One recovers the functions on the formal neighborhood by taking the Chevalley-Eilenberg cochain complex that computes the cohomology of the dg Lie algebra.) Often, this perspective is incredibly helpful, partly because the manipulations on the Lie side may be simpler.

Our primary interest is in families of deformation problems parametrized by smooth manifolds, so we might hope we get a nice kind of derived stack by equipping a smooth manifold with a sheaf of dg Lie algebras. We make this idea precise via the notion of an {\em $\L8$ space}.

\subsection{Curved $\L8$ algebras}

It is convenient to enlarge the Lie-theoretic side to make it more flexible. We will work with curved $\L8$ algebras rather than dg Lie algebras. 

\begin{definition}
Let $A$ be a commutative dg algebra with a nilpotent dg ideal $I$. A {\em curved $\L8$ algebra over $A$} consists of
\begin{enumerate}
\item[(1)] a locally free, $\ZZ$-graded $A^\sharp$-module $V$ and
\item[(2)] a linear map of cohomological degree 1
\[
d: \Sym (V[1]) \to  \Sym (V[1])
\]
\end{enumerate}
satisfying
\begin{enumerate}
\item[(i)] $d^2 = 0$,
\item[(ii)] $(\Sym (V[1]),d)$ is a cocommutative dg coalgebra over $A$ (i.e., $d$ is a coderivation), and
\item[(iii)] modulo $I$, the coderivation $d$ vanishes on the constants (i.e., on $\Sym^0$).
\end{enumerate}
\end{definition}

The notation $\Sym (V[1])$ indicates the graded vector space known as the symmetric algebra over the graded algebra $A^\sharp$ underlying the dg algebra $A$. We only remember its natural coalgebra structure in this setting. To reduce notation, we use $C_\ast(V)$ to denote the cocommutative dg coalgebra $(\Sym (V[1]),d)$. We use this notation because we call it the \emph{Chevally-Eilenberg homology complex} of $V$, as we are extending the usual notions of Lie algebra homology. 

Recall that we obtain a sequence of maps
\[
\ell_n: (\Lambda^n V)[n-2] \to V,
\]
the $n$-fold bracket on $V$, from the composition
\[
\Sym^n(V[1]) \hookrightarrow C_*(V) \overset{d}{\to} C_*(V) \overset{\pi}{\twoheadrightarrow} \Sym^1(V[1]) = V[1],
\]
by shifting by 1. Thinking of $V$ equipped with these brackets is why we use the terminology $\L8$ algebra; it is often easier to work with the Chevalley-Eilenberg homology complex, which assembles all the brackets into a single map.

There is also a natural Chevalley-Eilenberg \emph{cohomology} complex $C^\ast(V)$. It is $(\csym (V^\vee[-1]),d)$, where the notation $\csym (V^\vee[-1])$ indicates the completed symmetric algebra over the graded algebra $A^\sharp$ underlying the dg algebra $A$.  The differential $d$ is the ``dual" differential to that on $C_*(V)$. In particular, it makes $C^*(V)$ into a commutative dg algebra, so $d$ is a derivation.

We usually think of a curved $\L8$ algebra $\fg$ over $A$ as describing a derived space $B\fg$ over $\Spec A$. The algebra of functions of $B\fg$ is precisely its Chevalley-Eilenberg cohomology complex $C^* (\fg)$. Thanks to the natural pairing between the cohomology and homology complexes, we view $C_\ast(\fg)$ as the coalgebra of distributions on $B\fg$.

\begin{definition}
A {\em map of curved $\L8$ algebras} $\phi: V \to W$ is a map of cocommutative dg coalgebras $\phi_*: C_*(V) \to C_*(W)$ respecting the cofiltration by $I$. A map is a {\em weak equivalence} if $\phi_*$ is a quasi-isomorphism.
\end{definition}

\subsubsection{Commentary on curving}

A curious aspect of this definition is the curving, since the uncurved case is discussed far more often. Indeed, ``flat" $\L8$ algebras (i.e., with zero curving) are usually understood as describing pointed formal moduli problems (see, for example, Lurie's ICM talk \cite{LurieICM} for a recent discussion). If $\fg$ is the flat $\L8$ algebra over a commutative dg algebra $R$, the moduli problem $B\fg$ has a marked point. On the commutative algebra side, this appears as the fact that $C^*\fg$ is augmented: there is a distinguished map $C^*\fg \to R$. The derived space $B\fg = \Spec C^*\fg$ is thus pointed by the augmentation map $\Spec R \to \Spec C^*\fg$. A curved $\L8$ algebra $\fg$ then corresponds to a nonpointed formal moduli spaces, because $C^* \fg$ is not augmented. We now elaborate on this idea.

Let $R$ be a commutative dg algebra with nilpotent ideal $I$ and let $S$ denote $R/I$. Given a curved $\L8$ algebra $\tilde{\fg}$ over $R$, let $\fg$ denote the reduction modulo $I$, which is a flat $\L8$ algebra over $S$. Let $B\tilde{\fg}$ denote the space associated to the algebra $C_R^* \tilde{\fg}$, which is a semi-free algebra over $R$, and let $B\fg$ denote the space associated to $C_S^* \fg$, which is a semi-free algebra over $S$. The space $B\tilde{\fg}$ encodes a fattening of the pointed space $B\fg$, where we cannot extend the map $p: \Spec S \to \Spec C_R^* \tilde{\fg}$ to an $R$-point $\tilde{p}: \Spec R \to \Spec C_R^* \tilde{\fg}$. 
\[
\xymatrix{
B\fg \ar[d] \ar[r] & B\tilde{\fg} \ar[d] \\
\Spec S \ar[r] \ar[ur]^p & \Spec R
}
\]
The curving is the obstruction to such an extension.

\begin{remark}
Here is a different way of concocting such a situation. Consider a map of commutative dg algebras $f: A \to B$, which we view as a map of ``derived spaces" $\Spec B \to \Spec A$. This map makes $B$ an $A$-algebra and so we can find a semi-free resolution $\Sym_A(M)$ of $B$ as an $A$-algebra. This replacement $\Sym_A(M)$ expresses $B$ as a kind of $\L8$ algebra over $A$, namely $\fg_B = M^\vee[-1]$.\footnote{This is not strictly true, because we are not working with the completed symmetric algebra, but we're simply providing motivation here.} Note that if $f$ factors through a quotient $A/I$ of $A$, then $\fg_B$ will be curved. (The differential for the semi-free resolution will produce $I$ as the image of the differential's Taylor component mapping to $\Sym^0_A(M) = A$.) This curving appears because $\Spec B$ really only lives over the subscheme $\Spec A/I \subset \Spec A$, and extending it over the rest of $\Spec A$ is obstructed.
\hfill $\Diamond$ \end{remark}

This kind of situation appears in the category of nilpotent dg manifolds. For any $\cM = (M, \sO_\cM)$, we see that $\cM$ ``lives between" the smooth manifold $M_{sm}$ and its de Rham space $M_{dR}$ because we have algebra maps
\[
\Omega^*_M \to \sO_\cM \overset{q}{\to} \cinf_M
\]
by definition. These maps induce maps of nilpotent dg manifolds
\[
M_{sm} \to \cM \to M_{dR},
\]
where the underlying map of manifolds is simply the identity. We will see that we can often find a ``replacement" of $\cM$ as a kind of $\L8$ algebra over $M_{dR}$.

\subsection{The Maurer-Cartan equation}\label{sec:MCeqn}

For $\alpha$ a degree 1 element of a curved $\L8$ algebra $\fg$, let the \emph{Maurer-Cartan element} be 
\[
mc(\alpha) := \sum_{n = 0}^\infty \frac{1}{n!} \ell_n(\alpha^{\ot n}).
\]
The \emph{Maurer-Cartan equation} is then $mc(\alpha) = 0$. There are many useful interpretations of this equation (and we discuss some in the appendix \ref{app:MC}).\footnote{Something that might rightly bother the reader is that the equation involves an infinite sum, which will not be well-defined in most cases. We will only work with \emph{nilpotent} elements $\alpha$, so that the sum is actually finite. Indeed, we use $\fg$ and $mc$ to construct a functor on artinian algebras: tensoring $\fg$ with the maximal ideal of an artinian algebra gives us a nilpotent $\L8$ algebra on which $mc$ is well-behaved.} 

Here we will emphasize that a map of commutative dg algebras $a: C^*(\fg) \to A$ is determined by a cochain map $a: \Sym^1(\fg^\vee[-1]) = \fg^\vee[-1] \to A$, since a map of algebras is determined by where the generators go. Consider the element $\alpha \in \fg^{\vee \vee}[1] \otimes A$ that is dual to $a \in \Hom(\fg^\vee[-1],A)$. Then the condition of $a$ being a cochain map is precisely the Maurer-Cartan equation. on $\alpha$ (under the finiteness condition that $\fg^{\vee \vee} \cong \fg$). In sum, if we view $\fg$ as encoding some kind of space $B\fg$, the Maurer-Cartan equation lets us understand its $A$-points.

We now construct a simplicial set of solutions to the Maurer-Cartan equation. (As explained in appendix \ref{app:MC}, Getzler's paper \cite{Getzler} is a wonderful reference for this Maurer-Cartan functor and much more.)

\begin{definition}
Let $\fg$ be a curved $\L8$ algebra. The \emph{Maurer-Cartan space} $\MC_\bullet(\fg)$ is the simplicial set whose $n$-simplices are solutions to the Maurer-Cartan equation in the curved $\L8$ algebra $\fg \ot \Omega^*(\triangle^n)$.
\end{definition}

This space $\MC_\bullet(\fg)$ has several nice properties when $\fg$ is nilpotent: for instance, it is a Kan complex. See appendix \ref{app:MC} for more discussion and references.

\subsection{$\L8$ spaces}

We now describe the version of ``families of curved $\L8$ algebras parametrized by a smooth manifold" appropriate to our context.

\begin{definition}
Let $X$ be a smooth manifold. 
\begin{enumerate}
\item A {\em curved $\L8$ algebra over $\Omega^*_X$} consists of a $\ZZ$-graded topological\footnote{That is, the fibers are topological vector spaces and the gluing maps are continuous linear maps. In examples, the fibers will be nuclear Fr\'echet spaces, typically smooth sections of a vector bundle on some manifold. For instance, consider a smooth fiber bundle $p: T \to X$ where the fiber is diffeomorphic to some fixed manifold $M$ and consider a relative vector bundle $E$ on $T$, so that we have a vector bundle on each fiber of $T$ over $X$. Then the vector bundle $V$ might be relative sections of this relative vector bundle: to a point $x \in X$,  we associate the vector space of smooth sections of the vector bundle $E_x \to p^{-1}(x) \cong M$.} vector bundle $\pi: V \to X$ and the structure of a curved $\L8$ algebra structure  on its sheaf of smooth sections, denoted $\fg$, where the base algebra is  $\Omega^\ast_X$ with nilpotent ideal $\sI = \Omega^{\geq 1}_X$.
\item An {\em $\L8$ space} is a pair $(X, \fg)$, where $\fg$ is a curved $\L8$ algebra over $\Omega^\ast_X$.
\end{enumerate}
\end{definition}

We now explain how every $\L8$ space defines a derived stack.  Let $B\fg := (X,\fg)$ denote an $\L8$ space. Given a smooth map $f: Y \to X$, we obtain a curved $\L8$ algebra over $\Omega^*Y$ by
\[
f^* \fg := f^{-1}\fg \otimes_{f^{-1} \Omega^*_X} \Omega^*_Y,
\]
where $f^{-1} \fg$ denotes sheaf of smooth sections of the pullback vector bundle $f^{-1} V$.

\begin{definition}
For $B\fg = (X,\fg)$ an $\L8$ space, its {\em functor of points} is the functor
\[
\bB\fg: \dgMan^{op} \to s\!\Sets
\]
for which an $n$-simplex of $\bB\fg(\cM)$ is a pair $(f,\alpha)$: a smooth map $f: M \to X$ and a solution $\alpha$ to the Maurer-Cartan equation in $f^* \fg \ot_{\Omega^*_M} \sI_\cM \ot_\RR \Omega^*(\triangle^n)$.
\end{definition}

The basic idea of the definition is hopefully clear, but we want to remark upon several choices made in this definition. First, note that we use the nilpotent ideal $\sI_\cM$, not the whole algebra $\sO_\cM$. This restriction ensures that we have a nilpotent curved $\L8$ algebra, and hence a Kan complex. It also encodes the idea that we are deforming in the nilpotent direction away from an underlying map. Second, note that we are \emph{not} allowing the smooth map to vary over the $n$-simplices. In other words, $\bB\fg(\cM)$ is the disjoint union 
\[
\bigsqcup_{f \in C^\infty(M,X)} \MC_\bullet(f^* \fg \ot_{\Omega^*_M} \sI_\cM),
\]
where the union is over the \emph{set} of smooth maps $C^\infty(M,X)$. 

\begin{remark}
If we view $C^* \fg$ as the {\em structure sheaf} of the $\L8$ space, then a vertex of $\bB\fg(\cM)$ is a map of the underlying manifolds $f: M \to X$ and a map of commutative dg algebras $f^{-1} C^* \fg \to \sO_\cM$. In other words, it is a map of dg ringed spaces.
\hfill $\Diamond$ \end{remark}

\begin{theorem}\label{L8IsDerived}
The functor $\bB\fg$ associated to an $\L8$ space $B\fg$ defines a derived stack.
\end{theorem}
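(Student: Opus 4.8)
To show that $\bB\fg$ is a derived stack, I need to verify the two defining conditions: (1) that $\bB\fg$ sends weak equivalences of nilpotent dg manifolds to weak equivalences of simplicial sets, and (2) that $\bB\fg$ satisfies \v{C}ech descent. Since $\bB\fg(\cM)$ decomposes as a disjoint union over the \emph{set} $C^\infty(M,X)$ of smooth maps,
\[
\bB\fg(\cM) = \bigsqcup_{f \in C^\infty(M,X)} \MC_\bullet(f^* \fg \ot_{\Omega^*_M} \sI_\cM),
\]
and since the smooth map $f$ is not allowed to vary over the simplices, both conditions should reduce to statements about the individual Maurer-Cartan spaces $\MC_\bullet(f^* \fg \ot_{\Omega^*_M} \sI_\cM)$ attached to each fixed $f$. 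The first step is therefore to record the basic properties of the Maurer-Cartan functor $\MC_\bullet$: for a nilpotent curved $\L8$ algebra it produces a Kan complex, and (crucially) it is a homotopy-invariant construction, sending filtered quasi-isomorphisms of nilpotent $\L8$ algebras to weak equivalences of simplicial sets. I would cite Getzler \cite{Getzler} for these facts, as flagged in the text.

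\textbf{Condition (1): weak equivalences.} Let $F = (f,\phi): \cM \to \cN$ be a weak equivalence, so $f: M \to N$ is a diffeomorphism and $\Gr\phi$ is a quasi-isomorphism on associated graded algebras. Because $f$ is a diffeomorphism, it induces a bijection on the indexing sets $C^\infty(N,X) \cong C^\infty(M,X)$, so it suffices to check that for each fixed smooth map $g: N \to X$, the induced map of Maurer-Cartan spaces
\[
\MC_\bullet\bigl(g^*\fg \ot_{\Omega^*_N} \sI_\cN\bigr) \to \MC_\bullet\bigl((g\circ f)^*\fg \ot_{\Omega^*_M} \sI_\cM\bigr)
\]
is a weak equivalence. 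The plan here is to invoke homotopy invariance of $\MC_\bullet$: it is enough to show that the map of nilpotent $\L8$ algebras $g^*\fg \ot_{\Omega^*_N}\sI_\cN \to (g\circ f)^*\fg\ot_{\Omega^*_M}\sI_\cM$ is a filtered quasi-isomorphism. This is where the \emph{associated graded} formulation of weak equivalence earns its keep: tensoring $g^*\fg$ with $\sI_\cN$ (respectively its pullback with $\sI_\cM$) is compatible with the $I$-adic filtrations, and on the associated graded the map is obtained from $\Gr\phi$ by tensoring with the (locally free, hence flat) module $g^*\fg$, so it remains a quasi-isomorphism. I would run this as an \emph{artinian induction} along the filtration $F^k\sO = \sI^k$, exactly as advertised in the earlier discussion: at each square-zero extension step the relevant comparison is controlled by $\Gr\phi$, and one climbs the finite tower to conclude.

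\textbf{Condition (2): \v{C}ech descent.} For a cover $\fV = \{V_i\}$ of an open $U \subset M$, I must show $\bB\fg(U) \xrightarrow{\simeq} \holim_{\check{C}\fV}\bB\fg$. Since the set of smooth maps out of $U$ is itself a sheaf of sets on $M$ (smooth maps glue), the disjoint-union decomposition is compatible with the \v{C}ech nerve, and the question again localizes to fixed $f$, reducing descent for $\bB\fg$ to descent for the assignment $U \mapsto \MC_\bullet(f^*\fg\ot_{\Omega^*_U}\sI_\cM|_U)$. The key inputs are that $f^*\fg$ is a sheaf of sections of a vector bundle (hence a homotopy sheaf: it is soft/fine, so \v{C}ech cohomology in positive degree vanishes and the strict limit computes the homotopy limit), and that $\MC_\bullet$ commutes with the relevant homotopy limits of nilpotent $\L8$ algebras. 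I expect \textbf{this to be the main obstacle}: one must verify that $\MC_\bullet$ takes the \v{C}ech homotopy limit of the sheaf of $\L8$ algebras to the homotopy limit of simplicial sets. The cleanest route is again artinian induction up the nilpotence filtration, so that at each stage one is gluing solutions in a \emph{square-zero} (abelian) extension; there the Maurer-Cartan space is, up to homotopy, the Dold-Kan realization of a complex of sheaf sections, the obstruction/deformation theory is linear, and descent follows from the fact that the underlying sheaf of sections satisfies \v{C}ech descent together with the compatibility of $\holim$ with the Bousfield-Kan tower. Assembling the finitely many filtration layers then yields descent for the full $\bB\fg$, completing the proof.
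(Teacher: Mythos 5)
Your proposal is correct and follows essentially the same route as the paper: artinian induction up the nilpotence filtration $\sI^k$, reduction at each square-zero stage to abelian $\L8$ algebras whose Maurer--Cartan spaces are Dold--Kan realizations of cochain complexes, and softness/partition-of-unity arguments for \v{C}ech descent of sections of vector bundles. The only real divergence is that for condition (1) you cite a black-box filtered-quasi-isomorphism invariance of $\MC_\bullet$, whereas the paper proves by an explicit obstruction-theory argument that each map $\bB\fg(\cN_{k+1})\to\bB\fg(\cN_k)$ is a Kan fibration and then compares fibers via the long exact sequence of homotopy groups --- the same induction, with the key lemma made explicit rather than outsourced.
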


As the proof of this theorem is lengthy and somewhat technical, we banish it to appendix \ref{app:proof} to maintain our narrative flow.

\begin{remark}
A few interpretive comments are in order.  

First, one can view an $\L8$ space $(X,\fg)$ as a (non-nilpotent) dg manifold whose structure sheaf $C^* \fg$ is a cofibrant commutative dg algebra over $\Omega^\ast_X$. These are particularly well-behaved class of manifolds equipped with sheaves of commutative algebras, although we do not develop that formalism here. Note that every nilpotent dg manifold $(M, \sA)$ has a ``replacement" by an $\L8$ space, precisely by taking a semi-free resolution of its structure sheaf $\sA$ over $\Omega^*_M$. See remark \ref{rem:derivedenhancement} for more discussion of this point.

Second, as previously noted,  there is a well-known correspondence between $\L8$ algebras and formal moduli spaces. An $\L8$ space is a relative version of this idea: we have a family of formal moduli spaces parametrized by a smooth manifold. For a recent and enlightening treatment of an analogous idea in derived algebra geometry, see \cite{Hennion}, which also gives a clear explanation of how such relative formal stacks compare to derived Artin stacks.
\hfill $\Diamond$
\end{remark}

\subsection{Examples}

\subsubsection{The functor of points evaluated on a smooth manifold}

Let $M_{sm} = (M,\cinf_M)$ be a smooth manifold viewed as a nilpotent dg manifold. Note that the nilpotent ideal $\sI_M = 0$ here. Then we have the following simple observation.

\begin{lemma}\label{lem:basecase}
For any $\L8$ space $B\fg = (X,\fg)$, $\bB\fg(M)$ is the discrete simplicial set given by the set $C^\infty(M,X)$ of smooth maps from $M$ to $X$.
\end{lemma}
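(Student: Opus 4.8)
The plan is to unwind the definition of the functor of points directly on the test object $\cM = M_{sm}$ and then exploit the vanishing of its nilpotent ideal. Recall that an $n$-simplex of $\bB\fg(M_{sm})$ is, by definition, a pair $(f,\alpha)$ consisting of a smooth map $f \colon M \to X$ together with a solution $\alpha$ of the Maurer-Cartan equation in the curved $\L8$ algebra
\[
f^* \fg \ot_{\Omega^*_M} \sI_M \ot_\RR \Omega^*(\triangle^n).
\]
The crucial input, recorded just before the statement, is that $\sI_M = 0$ for $M_{sm} = (M, \cinf_M)$: here the structure map $q \colon \cinf_M \to \cinf_M$ is the identity, so its kernel is trivial. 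I would first note this and observe that tensoring any sheaf over $\Omega^*_M$ against the zero ideal yields the zero sheaf, so the displayed algebra is the zero algebra for every $n$.

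The second step is to compute the Maurer-Cartan space of the zero algebra. Since the ambient curved $\L8$ algebra is zero, its only degree-$1$ element is $0$. Every structure map $\ell_k$ (including the curving $\ell_0$) lands in the zero vector space, so the Maurer-Cartan element $mc(0)$ vanishes automatically; hence $\alpha = 0$ is the unique solution at each simplicial level. This shows that for each fixed $f$ the Maurer-Cartan space $\MC_\bullet(f^* \fg \ot_{\Omega^*_M} \sI_M)$ is the one-point simplicial set $\triangle[0]$.

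Finally, I would assemble these observations using the explicit description of the functor of points as a disjoint union over the \emph{set} $C^\infty(M,X)$ — recalling that the map $f$ is not allowed to vary across the simplices of a given component. Thus
\[
\bB\fg(M_{sm}) = \bigsqcup_{f \in C^\infty(M,X)} \MC_\bullet(0) = \bigsqcup_{f \in C^\infty(M,X)} \triangle[0],
\]
and a disjoint union of copies of the point indexed by a set $S$ is precisely the discrete simplicial set on $S$ (the constant functor with value $S$, all face and degeneracy maps being identities). Taking $S = C^\infty(M,X)$ gives the claim.

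I do not anticipate a genuine obstacle: the content here is entirely definitional. The only point that demands a moment's care is confirming that the curving term $\ell_0$ does not obstruct the existence of the trivial solution, and this concern evaporates precisely because the relevant algebra is the zero algebra, in which $\ell_0$ vanishes along with every other bracket.
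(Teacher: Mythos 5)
Your proposal is correct and follows exactly the paper's argument: the nilpotent ideal $\sI_M$ vanishes for $M_{sm}$, so $f^*\fg \ot_{\Omega^*_M} \sI_M = 0$ and the zero element is the unique Maurer-Cartan solution at every simplicial level, leaving the discrete set $C^\infty(M,X)$. You simply spell out the disjoint-union bookkeeping and the vanishing of the curving more explicitly than the paper does.
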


\begin{proof}
For any smooth map $f: M \to X$, we see that $f^* \fg \ot_{\Omega^*_M} \sI_M = 0$. Hence there is exactly one solution to the Maurer-Cartan equation: zero. 
\end{proof}

\subsubsection{The de Rham space $X_{dR}$}

Let $X$ be a smooth manifold. Consider the {\em zero} vector bundle, equipped with the trivial $\Omega^*_X$ structure. This $\L8$ space $(X, 0)$ has associated structure sheaf $C^* \fg = \Omega^*_X$: we are recovering the de Rham complex itself. Thus, this $\L8$ space provides a derived stack associated to the de Rham space $X_{dR}$. Abusively, we will also denote this derived stack by $X_{dR}$.

\begin{lemma}\label{lem:dR}
For $\cM = (M, \sO_\cM)$ a nilpotent dg manifold, $X_{dR}(\cM)$ is the discrete simplicial set of smooth maps $\Maps(M,X)$.
\end{lemma}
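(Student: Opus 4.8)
The plan is to unwind the definition of the functor of points for the specific $\L8$ space $(X, 0)$ and to observe that the Maurer-Cartan data is entirely trivial, so that all that survives is the underlying set of smooth maps. First I would recall that, by the definition of the functor of points, an $n$-simplex of $X_{dR}(\cM)$ is a pair $(f, \alpha)$ consisting of a smooth map $f : M \to X$ together with a solution $\alpha$ to the Maurer-Cartan equation in the curved $\L8$ algebra $f^* \fg \ot_{\Omega^*_M} \sI_\cM \ot_\RR \Omega^*(\triangle^n)$. Since here $\fg$ is the sheaf of sections of the \emph{zero} vector bundle on $X$, its pullback $f^* \fg$ along any smooth $f$ is again the zero sheaf, and hence the entire tensor product $f^* \fg \ot_{\Omega^*_M} \sI_\cM \ot_\RR \Omega^*(\triangle^n)$ vanishes for every $n$.

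Next I would observe that the Maurer-Cartan space of the zero $\L8$ algebra is the terminal simplicial set: its only degree-one element is $0$, which trivially satisfies $mc(0) = 0$, so $\MC_n(0)$ is a one-point set for every $n$ and all simplicial structure maps are the unique maps between singletons. Consequently $\MC_\bullet(0) \cong \triangle[0]$. Feeding this into the disjoint-union description of the functor of points gives
\[
X_{dR}(\cM) = \bigsqcup_{f \in C^\infty(M,X)} \MC_\bullet\big(f^* \fg \ot_{\Omega^*_M} \sI_\cM\big) = \bigsqcup_{f \in C^\infty(M,X)} \triangle[0],
\]
which exhibits $X_{dR}(\cM)$ as a coproduct of points indexed by $C^\infty(M,X)$. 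This is, by definition, the discrete simplicial set on the set $\Maps(M,X) = C^\infty(M,X)$, as claimed.

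There is essentially no obstacle here; the argument is a direct computation once the definitions are in hand. The only point requiring a moment's care is the verification that $\MC_\bullet(0)$ is genuinely the point rather than something larger, but this is immediate since the underlying graded vector space is zero and hence carries no nonzero cochains to generate any higher simplicial structure. I would close by remarking that this matches the expected behaviour of a de Rham space: the ideal $\sI_\cM$ is precisely the nilpotent (infinitesimal) direction that $X_{dR}$ forgets, so only the ``reduced'' datum $f : M \to X$ remains, in parallel with the algebro-geometric formula $X_{dR}(\Spec A) = X(\Spec A_{\mathrm{red}})$ promised in the earlier remark.
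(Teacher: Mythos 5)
Your argument is correct and is essentially the paper's own proof: both observe that the pullback of the zero bundle is zero, so the Maurer-Cartan space over each smooth map $f: M \to X$ is a point and the functor of points reduces to the discrete simplicial set on $C^\infty(M,X)$. Your version just spells out the disjoint-union decomposition and the triviality of $\MC_\bullet(0)$ more explicitly than the paper does.
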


In other words, $X_{dR}(\cM) = X_{dR}(M_{sm})$ for any nilpotent dg manifold: the derived stack only cares about the underlying smooth manifold and not the structure sheaf. Note that this behavior agrees with the definition of the de Rham stack in algebraic geometry.\footnote{In algebraic geometry, the de Rham stack $X_{dR}$ of a stack $X$ is given by the sheafification of the functor $X_{dR}(R) = X(R/Nil(R))$, where $Nil(R)$ denotes the nilradical. In short, the de Rham stack does not ``see'' nilpotent directions, only the underlying reduced scheme. For further discussion, see \cite{GaitsgoryRozenblyum}.}

\begin{proof}
For $f: M \to X$ a smooth map, we note that $f^{-1} 0 = 0$, so that we have the trivial $\L8$ algebra on $M$, no matter the structure sheaf on $\cM$, and so there is only the zero solution to the Maurer-Cartan equation.
\end{proof}

\subsubsection{An $\L8$ space encoding a smooth manifold}

Finally, we turn to our main example of an $\L8$ space: the one that encodes the smooth geometry of a manifold $X$.  More precisely, we have the following existence lemma from \cite{GG}. We include the proof here to illustrate how $\infty$-jet bundles allow us to find a ``Koszul dual" $\L8$ space to an actual manifold.

Recall that the $\infty$-jet bundle $J$ for the trivial bundle is a pro-vector bundle, whose fiber at a point $x \in X$ encodes the ``Taylor series around $x$'' for smooth functions. The bundle $J$ comes equipped with a canonical flat connection, whose horizontal sections are exactly the smooth functions on $X$. We denote the sheaf of smooth sections of $J$ by $\sJ$. The de Rham complex for $\sJ$, whose differential is given by the canonical flat connection, is denoted $dR(\sJ)$, to lessen the profusion of $\Omega$ throughout this paper.

\begin{lemma}\label{lem:tangentL8}
Let $X$ be a smooth manifold. There is a curved $\L8$ algebra $\fg_X$ over $\Omega_X$, with nilpotent ideal $\Omega^{> 0}_X$, such that
\begin{enumerate} 

\item[(a)] $\fg_X \cong \Omega^\sharp_X(T_X[-1])$ as an $\Omega^\sharp_X$ module;

\item[(b)] $dR(\sJ) \cong C^*(\fg_X)$ as commutative $\Omega_X$ algebras;

\item[(c)] The map sending a smooth function to its $\infty$-jet 
\[
C^\infty_X \hookrightarrow dR(\sJ) \cong C^*(\fg_X)
\]
is a quasi-isomorphism of $\Omega_X$-algebras.

\end{enumerate}
\end{lemma}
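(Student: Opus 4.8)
The plan is to recognize this as an instance of Gelfand--Kazhdan formal geometry: the object $dR(\sJ)$ is the de Rham complex of the $\infty$-jet bundle equipped with its canonical flat (Grothendieck) connection, and I would exhibit it as the Chevalley--Eilenberg cochain complex of a curved $\L8$ algebra by reading off the brackets directly from this connection. First I would fix notation. The fiber of the jet bundle at $x$ is the completed symmetric algebra on $T^*_x X$, so as a graded algebra $\sJ^\sharp \cong \csym_{\cinf_X}(T^*_X)$, whence $dR(\sJ)^\sharp \cong \Omega^\sharp_X \otimes_{\cinf_X} \csym(T^*_X)$. I then \emph{define} $\fg_X := \Omega^\sharp_X(T_X[-1])$ as an $\Omega^\sharp_X$-module, which is precisely statement (a). A direct computation of the graded dual and shift gives $\fg_X^\vee[-1] \cong \Omega^\sharp_X \otimes_{\cinf_X} T^*_X$, so that $C^*(\fg_X)^\sharp = \csym_{\Omega^\sharp_X}(\fg_X^\vee[-1]) \cong \Omega^\sharp_X \otimes_{\cinf_X} \csym(T^*_X) \cong dR(\sJ)^\sharp$ as graded-commutative $\Omega^\sharp_X$-algebras.

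Next, to install the curved $\L8$ structure on $\fg_X$ and prove (b), I would transport the differential across this isomorphism of graded algebras. The canonical flat connection makes $dR(\sJ)$ a commutative dg $\Omega_X$-algebra whose underlying graded algebra is the completed free algebra above; hence its differential is a degree-$1$, square-zero derivation of a completed free cdga, and such a derivation is exactly the datum of a curved $\L8$ structure on $\fg_X$, its Taylor coefficients being the brackets $\ell_n$. Declaring the brackets this way makes (b) hold by construction, and it remains only to verify that this genuinely defines a curved $\L8$ algebra. The condition $d^2 = 0$ is flatness $\nabla^2 = 0$; the coderivation/derivation compatibility is automatic; and for condition (iii) I would check that the curving $\ell_0$, namely the component of $\nabla$ on the linear generators $T^*_X \subset \sJ$ landing in $\Sym^0 = \Omega^*_X$, is a genuine $1$-form (in coordinates it is the $dx^i$-term of the Grothendieck connection) and therefore lies in the nilpotent ideal $\Omega^{>0}_X$. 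This is precisely the curving mechanism anticipated in the commentary on curving: $C^*(\fg_X)$ fails to be augmented over $\Omega_X$ but becomes augmented modulo $\Omega^{>0}_X$.

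Finally, for (c) I would prove that the inclusion of horizontal sections $\cinf_X \hookrightarrow dR(\sJ)$ is a quasi-isomorphism by a local computation. Since quasi-isomorphism is checkable on a cover, and both sides are complexes of fine sheaves, I can work in a coordinate chart where the jet bundle trivializes and the Grothendieck connection takes the standard formal shape $\nabla = d_{dR} - \sum_i dx^i\,\partial_{t_i}$. The resulting complex is a completed Koszul complex whose cohomology is concentrated in degree $0$ and consists of the jets constant in the fiber coordinates, i.e.\ exactly the pullbacks of smooth functions; this is the formal Poincar\'e lemma. Globalizing then yields (c).

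I expect the main obstacle to be concentrated in the last two steps. The first difficulty is verifying cleanly that the transported differential assembles into a bona fide curved $\L8$ structure with curving landing in $\Omega^{>0}_X$, since this is where the non-augmented, genuinely curved nature of $\fg_X$ appears and where one must be careful that the coderivation dual to $\nabla$ respects the cofiltration by the ideal. The second is the acyclicity in (c), which carries the real homological content and rests on the formal Poincar\'e lemma for the jet de Rham complex.
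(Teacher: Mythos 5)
Your proposal follows essentially the same route as the paper: identify $dR(\sJ)$ with a completed symmetric algebra on $\Omega^1_X$ base-changed to $\Omega^\sharp_X$, transport the flat-connection differential across that identification to obtain the curved $\L8$ structure (so that (b) holds by construction, with the curving landing in $\Omega^{>0}_X$), and deduce (c) from the formal Poincar\'e lemma (the paper cites \cite{CFT} for an explicit contracting homotopy). The one point you elide is that the identification $\sJ^\sharp \cong \csym_{\cinf_X}(\Omega^1_X)$ is not canonical: the paper obtains it by choosing a $\cinf_X$-linear splitting $\sigma$ of $F^1\sJ \to F^1\sJ/F^2\sJ \cong \Omega^1_X$ and invoking the universal property of the symmetric algebra, noting that the space of such splittings is contractible and all resulting $\L8$ algebras are strictly isomorphic.
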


\begin{proof}
We need to show that we can equip $\csym_{\cinf_X}(T_X^\vee) \ot_{\cinf_X} \Omega_X$ with a degree $1$ derivation $d$ such that $d^2 = 0$ (this is the curved $\L8$ structure) and such that this Chevalley-Eilenberg complex is quasi-isomorphic to $C^\infty_X$ as an $\Omega_X$ module. In this process we will see property (b) explicitly.

We start by working with $D_X$ modules and then use the de Rham functor to translate our constructions to $\Omega_X$ modules. Consider the sheaf $\sJ$ of infinite jets of smooth functions. Observe that there is a natural descending filtration on $\sJ$ by ``order of vanishing.'' To see this explicitly, note that the fiber of $J$ at a point $x$ is isomorphic (after picking local coordinates $x_1, \ldots, x_n$) to $\CC[[x_1, \ldots, x_n]]$, and we can filter this vector space by powers of the ideal $\mathfrak m = (x_1,\ldots,x_n)$. We define $F^k \sJ$ to be those sections of $\sJ$ which live in $\mathfrak m^k$ for every point. This filtration is not preserved by the flat connection, but the connection does send a section in $F^k \sJ$ to a section of $F^{k-1} \sJ \ot_{\cinf_X} \Omega^1_X$.

Observe that $F^1 \sJ / F^2 \sJ \cong \Omega^1_X$, because the first-order jets of a function encode its exterior derivative. Moreover, $F^k \sJ / F^{k+1} \sJ \cong \Sym^k(\Omega^1_x)$ for similar reasons. Pick a splitting of the map $F^1 \sJ \rightarrow \Omega^1_X$ as $\cinf_X$ modules; we denote the splitting by $\sigma$. (Note that there is a contractible space of such splittings, see the following subsection.) By the universal property of the symmetric algebra, we get a map of non unital $\cinf_X$ algebras that is, in fact, an isomorphism 
\[
 \Sym^{>0}_{\cinf_X}(\Omega^1_X) \xrightarrow{\cong} F^1 \sJ.
\]
Now both $\csym_{\cinf_X} (\Omega^1_x)$ and $\sJ$ are augmented $\cinf_X$ algebras with augmentations
\[
p: \csym_{\cinf_X} (\Omega^1_X) \to \Sym^0 = \cinf_X \text{ and } q: \sJ \to \sJ/ F^1 \sJ \cong \cinf_X .
\]
Further, $\Sym^{>0}_{\cinf_X}(\Omega^1_X) = \ker p$ and $F^1 \sJ = \ker q$, so we obtain an isomorphism of $\cinf_X$ algebras
\[
\csym_{\cinf_X} (\Omega^1_X) \xrightarrow{\cong_\sigma} \sJ
\]
by extending the previous isomorphism by the identity on $\Sym^0_{\cinf_X}$ and $\sJ / F^1 \sJ$.  The preceding discussion is just one instance of the equivalence of categories between commutative non unital $A$ algebras and commutative augmented $A$ algebras for $A$ any commutative algebra.

We then equip $\csym(\Omega^1_X)$ with the flat connection for $\sJ$, via the isomorphism, thus making it into a $D_X$ algebra. Applying the de Rham functor $dR$, we get an isomorphism of $\Omega_X$ algebras
\[
 \csym_{\cinf_X} (\Omega^1_X) \ot_{\cinf_X} \Omega_X \xrightarrow{\cong_\sigma} \sJ \otimes_{\cinf_X} \Omega_X .
\]
Recall that the symmetric algebra is compatible with base change, that is
\[
\csym_{\cinf_X} (\Omega^1_X ) \otimes_{\cinf_X} \Omega^\sharp_X = \csym_{\Omega^\sharp_X} (\Omega^1_X \otimes_{\cinf_X} \Omega^\sharp_X ) \cong \csym_{\Omega^\sharp_X} \left ( (T_X [-1] \otimes_{\cinf_X} \Omega^\sharp_X)^\vee [-1]\right ),
\]
where we dualize over $\Omega^\sharp_X$. Via the de Rham functor we have constructed a derivation on this completed symmetric algebra defining the $\L8$ structure over $\Omega_X$. 

Property (c) follows from a standard argument; see \cite{CFT} for an explicit contracting homotopy. 
\end{proof}

Note that we pick a splitting $\sigma$ in the proof but that the space of splittings is contractible, and that all the associated $\L8$ algebras are strictly isomorphic. We thus make no fuss over the choice of $\sigma$ and denote the resulting $\L8$ space $(X, \fg_X)$ by $B\fg_X$. 

\begin{remark}\label{rem:derivedenhancement}
We view $B\fg_X$ as a natural {\em derived enhancement} of the smooth manifold $X$ for the following reason. From the functor of points perspective, any sheaf of sets on the site $\Man$ of smooth manifolds
\[
\cM: \Man^{op} \to \Sets
\]
is a kind of ``generalized smooth manifold." The representable functor $\underline{X} = \Man(-,X)$ is such a sheaf. Similarly, a homotopy sheaf of simplicial sets $\cM$ on $\Man$ is then a smooth {\em stack}.\footnote{A generalized smooth manifold $\cM$ defines a smooth stack by taking the discrete simplicial set $\cM(Y)$ on every manifold $Y$. The argument parallels lemma \ref{lem:baseofcech}.} A {\em derived enhancement} of a smooth stack $\cM$ is a derived stack 
\[
\widetilde{\cM}: \dgMan^{op} \to s\!\Sets
\] 
such that the restriction to the subsite $\Man \subset \dgMan$ agrees with $\cM$.\footnote{This perspective is standard in the setting of derived geometry. See for instance \cite{STV} or \cite{ToenSeattle}.} By lemma \ref{lem:basecase}, (the derived stack of) any $\L8$ space $(X,\fg)$ is a derived enhancement of $\underline{X}$, since the restriction to $\Man$ does not care about $\fg$. But  the lemma \ref{lem:tangentL8} above shows that $B\fg_X$ essentially provides a ``cofibrant replacement" for the smooth manifold $X$: we have replaced the structure sheaf $C^\infty_X$ with a semi-free resolution over $\Omega_X$. In this sense, it is the most natural derived enhancement.\hfill $\Diamond$ 
\end{remark}

\subsubsection{An $\L8$ space encoding a complex manifold}

The construction of the $\L8$ space $B\fg_X$ is inspired by Costello's work in the holomorphic setting.  If $Y$ is a complex manifold, then there exists an $\L8$ space $Y_{\overline{\partial}} = (Y, \fg_{Y_{\overline{\partial}}})$ with the following properties.

\begin{prop}[Lemma 3.1.1 of \cite{Cos2}]
Let $Y$ be a complex manifold. The $\L8$ space $Y_{\overline{\partial}}$ is well defined up to contractible choice.  Further,
\begin{enumerate}
\item[(a)] as an $\Omega_Y^\sharp$-module, $\fg_{Y_{\overline{\partial}}}$ is isomorphic to $\Omega^\sharp_Y(T^{1,0}_Y [-1])$;
\item[(b)] the derived stack $\bB \fg_{Y_{\overline{\partial}}}$ represents the moduli problem of holomorphic maps into $Y$: for any complex manifold $X$ (viewed as a nilpotent dg manifold), $\bB \fg_{Y_{\overline{\partial}}}(X)$ is the discrete simplicial set of holomorphic maps from $X$ to $Y$.
\end{enumerate}
\end{prop}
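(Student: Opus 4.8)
The plan is to construct $\fg_{Y_{\overline{\partial}}}$ by mimicking the proof of Lemma \ref{lem:tangentL8}, replacing the smooth $\infty$-jet bundle by its holomorphic analog and the de Rham resolution $dR$ by the Dolbeault resolution; throughout, the Dolbeault complex $\Omega^{0,*}_Y$ plays the role that $\Omega^*_X$ played in the smooth case. Let $\sJ^{\mathrm{hol}}$ denote the sheaf of $\infty$-jets of holomorphic functions on $Y$, whose fiber at $y$ is (after choosing local holomorphic coordinates) the ring $\CC[[z_1,\dots,z_n]]$ and which carries a canonical flat connection whose horizontal sections are the holomorphic functions. As in the smooth case, the filtration of $\sJ^{\mathrm{hol}}$ by order of vanishing has associated graded $\bigoplus_k \Sym^k \Omega^{1,0}_Y$, so a choice of splitting $\sigma$ (the space of which is contractible) yields an isomorphism of graded $\cinf_Y$-algebras $\csym_{\cinf_Y}(\Omega^{1,0}_Y) \cong \sJ^{\mathrm{hol}}$. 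Transporting the connection along $\sigma$ and applying the Dolbeault analog of the de Rham functor, we obtain a commutative dg algebra over $\Omega^{0,*}_Y$ whose underlying graded algebra is $\csym_{\Omega^{0,\sharp}_Y}(\Omega^{1,0}_Y \ot_{\cinf_Y}\Omega^{0,\sharp}_Y)$. Dualizing the generators over $\Omega^{0,\sharp}_Y$ exhibits this as $C^*(\fg_{Y_{\overline{\partial}}})$ with $\fg_{Y_{\overline{\partial}}} \cong \Omega^{0,\sharp}_Y(T^{1,0}_Y[-1])$, which is statement (a) (reading $\Omega_Y$ as the Dolbeault complex); the contractibility of the space of splittings gives the ``well defined up to contractible choice'' claim.

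For the quasi-isomorphism playing the role of property (c) in Lemma \ref{lem:tangentL8}, I would combine the holomorphic Poincar\'e lemma with Dolbeault's lemma at the level of jets: the map $\mathcal{O}^{\mathrm{hol}}_Y \hookrightarrow C^*(\fg_{Y_{\overline{\partial}}})$ sending a holomorphic function to its $\infty$-jet is a quasi-isomorphism of $\Omega^{0,*}_Y$-algebras, with an explicit contracting homotopy as in \cite{CFT}. In particular $C^*(\fg_{Y_{\overline{\partial}}})$ is a semi-free (hence cofibrant) resolution of the sheaf of holomorphic functions on $Y$ over the Dolbeault complex, the feature that will drive the proof of (b).

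To prove (b), I would compute the functor of points directly while using the ringed-space description from the remark following Theorem \ref{L8IsDerived}. For $X$ a complex manifold viewed as the nilpotent dg manifold $(X,\Omega^{0,*}_X)$, a vertex of $\bB\fg_{Y_{\overline{\partial}}}(X)$ is a pair $(f,\alpha)$ with $f: X\to Y$ smooth and $\alpha$ a solution to the Maurer--Cartan equation in $f^*\fg_{Y_{\overline{\partial}}}\ot_{\Omega^*_X}\Omega^{0,>0}_X$. The key point is that the curving term $\ell_0$ of the pullback $f^*\fg_{Y_{\overline{\partial}}}$ computes a representative of $\overline{\partial} f \in \Omega^{0,1}_X\ot f^*T^{1,0}_Y$: when $f$ is holomorphic the curving vanishes and $\alpha=0$ is the solution, whereas when $f$ is not holomorphic the nonzero curving obstructs any solution to $\sum_n \tfrac{1}{n!}\ell_n(\alpha^{\ot n}) = 0$. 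This singles out exactly the holomorphic maps. The ringed-space picture confirms and clarifies this: a vertex is equivalently a map of commutative dg algebras $f^{-1}C^*(\fg_{Y_{\overline{\partial}}}) \to \Omega^{0,*}_X$ over $f$, and since $C^*(\fg_{Y_{\overline{\partial}}})$ resolves $\mathcal{O}^{\mathrm{hol}}_Y$, commuting with differentials forces each local holomorphic $h$ on $Y$ to be sent to $f^*h$, which must then be a degree-zero $\overline{\partial}$-cocycle, i.e. holomorphic; the condition that $f^*h$ be holomorphic for all $h$ is precisely holomorphicity of $f$, and conversely a holomorphic $f$ manifestly supplies such an algebra map. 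Finally, one checks $\bB\fg_{Y_{\overline{\partial}}}(X)$ is homotopy discrete on this set, so that it is the discrete simplicial set of holomorphic maps $X \to Y$.

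The main obstacle is concentrated in the two resolution statements. First, establishing $\mathcal{O}^{\mathrm{hol}}_Y \simeq C^*(\fg_{Y_{\overline{\partial}}})$ requires the holomorphic counterpart of the contracting homotopy used in the smooth case, merging the jet-level holomorphic Poincar\'e lemma with the Dolbeault lemma; this is where complex geometry genuinely enters. Second, and more delicately, one must verify that the equivalence ``$f$ pulls back holomorphic functions to holomorphic functions'' matches the geometric notion of holomorphicity at the level of simplicial sets, i.e.\ that the higher homotopy of the Maurer--Cartan space vanishes once $f$ is pinned to a holomorphic map. This discreteness is the holomorphic analog of Lemmas \ref{lem:basecase} and \ref{lem:dR} and is the step most in need of care.
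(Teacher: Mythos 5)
First, a point of comparison: the paper does not prove this proposition at all --- it is imported as Lemma 3.1.1 of \cite{Cos2} --- so there is no internal argument to measure your proposal against, only the construction that the statement itself pins down. Your overall strategy is the right one and matches Costello's: replace smooth $\infty$-jets by holomorphic $\infty$-jets, split the order-of-vanishing filtration (a contractible choice, giving the ``well defined up to contractible choice'' clause), and detect holomorphicity of $f$ in part (b) through the curving of $f^*\fg_{Y_{\overline{\partial}}}$, which computes $\overline{\partial}f$. However, there is one genuine misstep. You resolve the holomorphic jets over the Dolbeault complex and produce a curved $\L8$ algebra over $\Omega^{0,*}_Y$, then ``read $\Omega_Y$ as the Dolbeault complex'' in part (a). That is not the statement being proved: part (a) asserts $\fg_{Y_{\overline{\partial}}} \cong \Omega^\sharp_Y(T^{1,0}_Y[-1])$ as a module over the \emph{full} de Rham forms, and the paper's definition of an $\L8$ space requires the base algebra to be $\Omega^*_Y$ with nilpotent ideal $\Omega^{\geq 1}_Y$. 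With base $\Omega^{0,*}_Y$ your object is not an $\L8$ space in this paper's sense, Theorem \ref{L8IsDerived} does not apply, and the derived stack $\bB\fg_{Y_{\overline{\partial}}}$ appearing in part (b) is not defined. The repair is to keep the smooth-case base: the pro-bundle of holomorphic jets carries a flat connection in \emph{all} directions (locally $d - \sum_i dz_i\,\partial/\partial\tilde z_i$) whose horizontal sections are exactly the holomorphic functions inside $\cinf_Y$, its associated graded is $\csym_{\cinf_Y}((T^{1,0}_Y)^\vee)$, and applying the full de Rham functor $dR$ to a chosen splitting yields $C^*(\fg_{Y_{\overline{\partial}}})$ as a semi-free $\Omega^*_Y$-algebra resolving $\sO^{\mathrm{hol}}_Y$, with $\fg_{Y_{\overline{\partial}}} \cong \Omega^\sharp_Y(T^{1,0}_Y[-1])$ exactly as stated. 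The curving is then the $(1,0)$-solder form $\sum_i dz_i \ot \partial_{z_i}$, and only its $\overline{\partial}f$ component survives after pulling back along $f$ and tensoring over $\Omega^*_X$ with $\Omega^{0,\geq 1}_X$, which is the mechanism you describe.

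Your second flagged obstacle --- that the Maurer--Cartan space over a holomorphic $f$ might have higher homotopy --- in fact evaporates once the module structure is as in (a): one finds
\[
f^*\fg_{Y_{\overline{\partial}}}\ot_{\Omega^*_X}\Omega^{0,\geq 1}_X \;\cong\; \Omega^{0,\geq 1}_X\bigl(f^*T^{1,0}_Y[-1]\bigr),
\]
which is concentrated in cohomological degrees $\geq 2$, and tensoring with $\Omega^*(\triangle^n)$ cannot lower degree. So $\alpha = 0$ is the only candidate simplex in every simplicial degree, and it solves the Maurer--Cartan equation precisely when the curving $\overline{\partial}f$ vanishes. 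Hence the functor of points is literally the discrete set of holomorphic maps, with no separate homotopy-discreteness argument required; this is the same degree count that underlies Lemma \ref{lem:basecase}.
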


\section{Geometry with $\L8$ spaces}

Especially important for us will be that we can thus define shifted symplectic structures on $\L8$ spaces, which play a crucial role in the classical Batalin-Vilkovisky formalism.

\subsection{Vector bundles on $\L8$ spaces}

The notion of $\L8$ space is sufficiently geometric to admit notions of vector bundles, in particular, (co)tangent bundles. 

\begin{definition}
Let $B\fg := (X, \fg)$ be an $\L8$ space.  A {\it vector bundle} on $B\fg$ is a $\ZZ$-graded topological vector bundle $\pi:V \to X$ for which its sheaf of smooth sections $\cV$ over $X$ has the structure of an $\Omega^\sharp_X$-module and for which $\fg \oplus \cV$ has the structure of a curved $\L8$ algebra over $\Omega^*_X$ such that
\begin{enumerate}
\item[(1)] the inclusion $\fg \hookrightarrow  \fg \oplus \cV$ and the projection $\fg \oplus \cV \to \fg$ are maps of $\L8$ algebras, and
\item[(2)] the Taylor coefficients $\ell_n$ of the $\L8$ structure vanish on tensors containing two or more sections of $\cV$.
\end{enumerate}
The {\it sheaf of sections of $\cV$ over $B\fg$} is given by the sheaf on $X$ of dg $C^\ast(\fg)$-modules $C^\ast (\fg, \cV[1])$, the Chevalley-Eilenberg complex for an $\fg$-module. The {\em total space} for the vector bundle $\cV$ over $B\fg$ is the $\L8$ space $(X, \fg \oplus \cV)$.
\end{definition}

In particular, for $X$ a point, we recover the usual notion of a representation of $\fg$. Note that we have merely picked out a class of well-behaved sheaves of $\fg$-modules. 

We now pick out two important examples. Recall that for a semi-free commutative dg algebra $\cA = (\csym(V), d)$, the {\em derivations} are the module $\Der \cA := (\csym(V) \ot V^\vee, d)$. We view the derivations as the vector fields --- the sections of the tangent bundle $T_\cA$ --- on the space corresponding to $\cA$. By this correspondence, we obtain the following.

\begin{definition}
The {\em tangent bundle} $T_{B\fg}$ is given by $\fg[1]$ equipped with the (shifted) adjoint action of $\fg$. Likewise, the {\em cotangent bundle} $T^*_{B\fg}$ is given by $\fg^\vee[-1]$ equipped with the (shifted) coadjoint action of $\fg$.
\end{definition}

There are also {\em shifted} (co)tangent bundles. For instance, we let $T[k]B\fg$ denote the $\L8$ space $(X,\fg \oplus \fg[k+1])$, which is the total space of the vector bundle $T_{B\fg}[k]$.

Sections of $T^*_{B\fg}$ over $B\fg$ are the K\"ahler differentials $\Omega^1_{B\fg}$ of $\sO_{B\fg} = C^* \fg$. They are $\csym(\fg^\vee [-1]) \ot_k \left( \fg^\vee [-1]\right)$ equipped with the differential 
\[
d_{\Omega^1}: f \ot x \mapsto d_\fg f \ot x + (-1)^{|f|} f \cdot d_{dR} (d_\fg x), 
\]
where $f \in \sO_{B\fg}$ and $x \in \fg^\vee[-1]$. Here $d_\fg$ denotes the differential on $C^*\fg$ and $d_{dR}: \sO_{B\fg} \to \Omega^1_{B\fg}$ denotes the universal derivation
\[
d_{dR}: x \mapsto 1 \ot x
\]
for $x \in \fg^\vee[-1]$. Note that $d_\Omega \circ d_{dR} = d_{dR} \circ \partial$. From hereon, we will denote $1 \ot x$ by $dx$ and $f \ot x$ by $f \, dx$.

There is a natural analog of the de Rham complex $DR_{B\fg}$ where $\Omega^k_{B\fg} := C^*(\fg, \wedge^k \fg [-k])$ and 
\[
DR_{B\fg} := \bigoplus_{k \geq 0} \Omega^k_{B\fg}[-k]
\]
with total differential $d_{DR}$ given by adding the ``internal" differential of $\Omega^k$ as a $\fg$-module to the ``exterior derivative" $d_{dR}$ (the universal derivation described in the preceding paragraph).

\subsection{Shifted symplectic structures}

We now formulate the analog of a symplectic form $\omega$ on an $\L8$ space $B\fg = (X,\fg)$. Recall that on a smooth manifold $M$, a 2-form $\omega \in \Omega^2(M)$ is {\em symplectic} if $\omega$ is closed and provides an isomorphism of vector bundles $\underline{\omega}: T_M \to T^*_M$. We must provide derived versions of these two conditions. (For a more sophisticated and thorough treatment of derived symplectic geometry, see \cite{PTVV}. Our definitions are essentially $\L8$ space versions of theirs.)\footnote{As in the case of the Lie-theoretic perspective on deformation theory, there is a long history to these ideas. Our approach grows directly out of the work of Schwarz and Kontsevich and their various collaborators (see, for instance, \cite{Schwarz}, \cite{MovSchw}, and \cite{AKSZ}).}

As described above, we have an analog of the de Rham complex. A {\em closed 2-form} is then a cocycle in the truncated de Rham complex
\[
\Omega^2_{cl} := \left(\bigoplus_{k \geq 2} \Omega^k_{B\fg}[-k+2], d_{DR}\right),
\]
shifted to put $\Omega^2$ ``in degree 0." Observe that there is a natural map
\[
i: \Omega^2_{cl} \to \Omega^2_{B\fg}
\]
by forgetting components living in $\Omega^{> 2}$. Hence being closed is {\em data}, not a {\em property}: given a 2-form $\omega \in \Omega^2_{B\fg}$, one must find a lift to some cocycle $\widetilde{\omega} \in\Omega^2_{cl}$ to have a closed 2-form.

As usual, every element of the $\fg$-module $\cV^\vee \ot \cW$ determines an element of the module $\Hom_{\fg}(\cV,\cW)$. Given a 2-form $\omega$ of cohomological degree $n$, we thus obtain a bundle morphism $\underline{\omega}: T_{B\fg} \to T^*_{B\fg}$ of cohomological degree $-n$. We say $\omega$ is {\em nondegenerate} if the morphism $\underline{\omega}$ is a quasi-isomorphism from $T_{B\fg}$ to $T^*_{B\fg}[-n]$.

\begin{definition}
An {\em $n$-shifted symplectic $\L8$ space} is an $\L8$ space $B\fg = (X,\fg)$ equipped with a closed 2-form $\omega \in \Omega^2_{cl}$ of cohomological degree $-n$ such that $i(\omega)$ is nondegenerate.
\end{definition}

The shifted cotangent bundle $T^*[n]B\fg$ is equipped with a natural $n$-shifted symplectic structure by antisymmetrizing the (shifted) evaluation pairing. 

\section{The derived loop space}

We now discuss an interesting collection of examples --- derived loop spaces --- whose analogs in derived algebraic geometry provide new perspectives on important constructions in representation theory and homological algebra (see, for instance, \cite{BZN} and \cite{TVonLX}). In our work \cite{GG}, their geometry describes the classical field theory that we quantize.

Recall the usual free loop space $LX$, where
\[
LX = \Man(S^1 , X) = \{\text{smooth maps $f: S^1 \to X$}\}.
\]
There are various ways to view $LX$ as a topological space or even an $\infty$-dimensional smooth manifold. For us, the functor of points approach to geometry suggests that we view $LX$ as the functor 
\[
\begin{array}{cccc}
L_{sm}X: & \Man^{op} & \to & \Sets \\[1ex]
& M & \mapsto & \Man(S^1 \times M , X)
\end{array}
\]
where the right hand side is simply the {\em set} of smooth maps $S^1 \times M \to X$. Note that this functor is, in fact, a sheaf on the site $\Man$.

When we move to the derived setting, we can ask for a derived enhancement of $L_{sm}X$, but we can also ask about other kinds of loop spaces using other circles. In particular, two other ``circles" besides $S^1$ are: 
\begin{enumerate}
\item[(1)] the de Rham circle $S^1_{dR} = (S^1, \Omega^*_{S^1})$, and
\item[(2)] the ``Betti circle" $S^1_\cB = (\pt, \RR[\epsilon])$ with $\epsilon$ of degree 1 (i.e., the functions are the cohomology ring of the circle).
\end{enumerate}
The de Rham circle is a nilpotent dg manifold but the Betti circle is not, strictly speaking, because the algebra does not sit in nonpositive degrees. Nonetheless, it is a reasonable ringed space to consider. In essence, $S^1_{dR}$ knows about the smooth topology of $S^1$ and $S^1_\cB$ knows about the real homotopy type of $S^1$.

We now describe derived stacks that are natural loop spaces for these various circles. Note that we will always be discussing some version of a loop space for a manifold $X$, but our constructions work for an arbitrary $\L8$ space by replacing $B\fg_X$ by an arbitrary $B\fg$.

\subsection{Enhancing $L_{sm}X$}

Lemma \ref{lem:tangentL8} gives us a natural derived enhancement of $X$ itself: use the $\L8$ space $B\fg_X$. We piggyback on that result to enhance $L_{sm} X$.

\begin{definition}
Let $\cL_{sm} X$ denote the functor 
\[
\begin{array}{cccc}
\cL_{sm}X: & \dgMan^{op} & \to & s\!\Sets \\[1ex]
& \cM & \mapsto &  \bB\fg_X(S^1 \times \cM)
\end{array}.
\]
\end{definition}

This functor is, in fact, a derived stack because $\bB\fg_X$ is.

\subsection{The Betti loop space $\cL_{\cB}X$}

Let $X$ be a smooth manifold and let $B\fg_X = (X, \fg_X)$ be the associated $\L8$ space. Let $\cL_{\cB} X$ denote the $\L8$ space $(X, \RR[\epsilon] \ot \fg_X)$. We call it the {\em Betti loop space} of $X$.

\begin{remark}
If $S^1_\cB$ were a nilpotent dg manifold, the functor $\bB\cL_\cB X$ would agree with $\bB\fg_X(S^1_\cB \times -)$. Hence this definition is motivated by the same logic as $L_{sm}X$ or $\cL_{sm}X$.
\hfill $\Diamond$ \end{remark}

The Betti loop space has another description: it is just the $\L8$ space $T[-1]B\fg_X$.\footnote{This statement is the manifestation, in our formalism, of the fact that the derived loop space of a scheme $X$ is $T[-1]X$. See \cite{BZN} for some discussion of this fact.} Explicitly, this $\L8$ space is
\[
(X, \fg_X \oplus \fg_X[-1]),
\]
where $\fg_X[-1]$ denotes $\Omega^*_X(T^*_X[-2])$ viewed as an $\fg_X$-module by the shifted adjoint action. 

Observe that there is a natural quasi-isomorphism
\[
\bigoplus_{k \geq 0} \Omega^k_X[k] \overset{\simeq}{\hookrightarrow} C^*(\fg_X \oplus \fg_X[-1]),
\]
extending the quasi-isomorphism from lemma \ref{lem:tangentL8}. This equivalence says that ``the functions on the Betti loop space are the (regraded) de Rham forms of $X$," that is, $\sO(\cL_\cB X) \simeq \bigoplus_{k \geq 0} \Omega^k_X[k]$. This description meshes nicely with the Hochschild-Kostant-Rosenberg theorem and its interpretation in derived geometry: the Betti loop space encodes the derived self-intersection of $X$ as the diagonal in $X \times X$, and hence its algebra of functions should be the Hochschild homology of functions on $X$. (See \cite{TVonHKR} and \cite{BZN} for a lot more on this topic.)

\begin{remark}
The Betti loop space, and the de Rham loop space defined below, only become interesting when evaluated on interesting nilpotent dg manifolds. When restricted to the subcategory $\Man$ of ordinary manifolds, these functors are isomorphic to the underlying smooth manifold $X$.\hfill $\Diamond$ 
\end{remark}

\subsection{The de Rham loop space $\cL_{dR} X$ and its completion along constant maps}

Following the pattern so far, we define the {\em de Rham loop space} of $X$ as follows.

\begin{definition}
Let $\cL_{dR} X$ denote the functor 
\[
\begin{array}{cccc}
\cL_{dR}X: & \dgMan^{op} & \to & s\!\Sets \\[1ex]
& \cM & \mapsto &  \bB\fg_X(S^1_{dR} \times \cM)
\end{array}.
\]
\end{definition}

This functor is again a derived stack. 

Our main interest in \cite{GG}, however, lies with a simpler space: we focus on maps out of $S^1_{dR} \times \cM$ where the underlying smooth map $S^1 \times M \to X$ is constant along $S^1$. Thanks to a result of Costello, this subfunctor is, in fact, given by an $\L8$ space. We start with his general result before discussing its use in our context.

Let $B\fg =(X, \fg)$ be an $\L8$ space and let $\cN = (N , \sO_\cN)$ be a nilpotent dg manifold.  We define a new simplicial presheaf $\bB\fg^\cN$ on the site $\dgMan$ by
\[
\begin{array}{cccc}
\bB\fg^\cN: & \dgMan^{op} & \to & s\!\Sets \\[1ex]
& \cM & \mapsto & \bB\fg (\cN \times \cM)
\end{array}.
\]
Consider the sub-simplicial presheaf $\widehat{\bB\fg^\cN} \subset \bB\fg^\cN$ given by Maurer-Cartan solutions where the underlying map of manifolds $N \times M \to X$ is independent of $N$.  That is, for $\cM$ a nilpotent dg manifold
\[
\widehat{\bB\fg^\cN}(\cM) \subset \bB\fg (\cN \times \cM)
\]
consists of Maurer-Cartan elements $(f, \alpha)$ such that the underlying smooth map $f$ factors through the projection onto $M$:
\[
\xymatrix{
N \times M \ar[d]_{\pi_M} \ar[r]^f & X \\
M \ar@{-->}[-1,1]
}.
\]
Costello shows that this subfunctor is itself an $\L8$ space, under certain conditions.

Every nilpotent dg manifold $\cN = (N, \sO_\cN)$ has a natural filtration $F^k \sO_\cN = \sI_\cN^k$ by powers of its nilpotent ideal $\sI_\cN$. Let $\Gr^k \sO_\cN = F^k \sO_\cN/F^{k+1} \sO_\cN$ denote the $k$th  component of the associated graded sheaf of algebras.

\begin{prop}[Proposition 5.0.1 of \cite{Cos2}]\label{prop:CostelloRep}
Let $\cN = (N, \sO_\cN)$ be a nilpotent dg manifold such that the cohomology of $\Gr^k \sO_\cN$ is concentrated in strictly positive degrees for $k \geq 1$.
Further, let $B\fg = (X,\fg)$ be an $\L8$ space such that the cohomology of the sheaf of $\L8$ algebras
\[
\fg^{red} := \fg/\Omega^{>0}_X
\]
is concentrated in strictly positive degrees.
Then, the simplicial presheaf $\widehat{\bB\fg^\cN}$ is  weakly equivalent to the functor of points for the $\L8$ space $\fh := (X, \fg \otimes_\RR \sO_\cN(N))$, where $\sO_\cN(N)$ denotes the global sections of the structure sheaf $\sO_\cN$. In other words, we have a natural transformation of functors
\[
\bB\fh \xrightarrow{\simeq} \widehat{\bB\fg^\cN}
\]
that is an objectwise weak equivalence.
\end{prop}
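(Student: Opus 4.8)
The plan is to work fiberwise over the underlying smooth map, realize the comparison as the inclusion of an $\L8$ ideal, and then exploit the fact that the Maurer--Cartan functor turns surjections of nilpotent $\L8$ algebras into fibrations. Since neither functor lets the underlying smooth map vary across simplices, both $\bB\fh(\cM)$ and $\widehat{\bB\fg^\cN}(\cM)$ split as disjoint unions over $g\in C^\infty(M,X)$; writing $f = g\circ\pi_M$ for the associated map constant along $N$, the two fibers over a fixed $g$ are $\MC_\bullet(\frak{b})$ and $\MC_\bullet(\frak{a})$, where, using $f^\ast\fg = \pi_M^\ast(g^\ast\fg)$ and the definition of the product structure sheaf,
\[
\frak{a} = g^\ast\fg \otimes_{\Omega^\ast_M} \sI_{\cN\times\cM}, \qquad \frak{b} = g^\ast\fg \otimes_{\Omega^\ast_M}\sI_\cM \otimes_\RR \sO_\cN(N).
\]
Using the external-tensor identity $\Gamma(N\times M,\, \pi_N^{-1}\mathcal E \,\hat\otimes\, \pi_M^{-1}\mathcal F) \cong \Gamma(N,\mathcal E)\,\hat\otimes\,\Gamma(M,\mathcal F)$ for nuclear sections, I would identify $\frak{b}$ with the global sections of the summand $\pi_N^{-1}\sO_\cN \otimes \pi_M^{-1}\sI_\cM$ of $\sI_{\cN\times\cM}$. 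A short check shows this summand is a dg ideal stable under the brackets, so $\frak{b}$ sits inside $\frak{a}$ as a nilpotent $\L8$ ideal, and the inclusion $\Phi:\frak{b}\hookrightarrow\frak{a}$ is the fiberwise comparison map. It is manifestly natural in $\cM$ and assembles into the candidate natural transformation $\bB\fh\to\widehat{\bB\fg^\cN}$.

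The heart of the argument is the quotient $\L8$ algebra $\frak{c} := \frak{a}/\frak{b}$. From $\sI_{\cN\times\cM}/(\pi_N^{-1}\sO_\cN\otimes\pi_M^{-1}\sI_\cM) \cong \pi_N^{-1}\sI_\cN\otimes\pi_M^{-1}\cinf_M$ and the identity $g^\ast\fg\otimes_{\Omega^\ast_M}\cinf_M \cong g^\ast\fg^{red}$, one obtains
\[
\frak{c} \cong \sI_\cN(N)\,\hat\otimes_\RR\,\Gamma\!\left(M,\, g^\ast\fg^{red}\right),
\]
with differential assembled from the internal differential of $\sI_\cN$ and the reduced differential of $\fg^{red}$. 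Here the two hypotheses enter decisively: filtering $\sI_\cN = F^1\sO_\cN$ by powers of its nilpotent ideal and using that the relevant sheaves are fine (so global sections compute cohomology), the hypothesis on $\Gr^{\geq 1}\sO_\cN$ places $H^\ast(\sI_\cN(N))$ in strictly positive degrees, while the hypothesis on $\fg^{red}$ places $H^\ast(\Gamma(M, g^\ast\fg^{red}))$ in strictly positive degrees. A K\"unneth argument then concentrates the cohomology of $\frak{c}$ in degrees $\geq 2$; in particular $H^{\leq 1}(\frak{c}) = 0$.

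It remains to pass from this algebraic statement to the Maurer--Cartan spaces. I would invoke the properties of $\MC_\bullet$ recorded in appendix~\ref{app:MC} (following Getzler): the surjection $\frak{a}\twoheadrightarrow\frak{c}$ of nilpotent $\L8$ algebras induces a Kan fibration $\MC_\bullet(\frak{a})\to\MC_\bullet(\frak{c})$ whose fiber over the basepoint is exactly $\MC_\bullet(\frak{b})$. Since $H^{\leq 1}(\frak{c}) = 0$ --- and this bound persists under the nilpotent twists appearing in the homotopy-group formula $\pi_n(\MC_\bullet(\frak{c}),\alpha)\cong H^{1-n}(\frak{c}^\alpha)$ --- the base is contractible, so the fiber inclusion $\Phi_\ast:\MC_\bullet(\frak{b})\xrightarrow{\simeq}\MC_\bullet(\frak{a})$ is a weak equivalence. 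Reassembling over $g$ and using naturality in $\cM$ yields the objectwise weak equivalence $\bB\fh\xrightarrow{\simeq}\widehat{\bB\fg^\cN}$.

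I expect the main obstacle to be the second step: correctly computing $\frak{c}$ through the completed tensor product defining $\sO_{\cN\times\cM}$ (so that the differentials really match under the global-sections identifications) and checking that the two positivity hypotheses are precisely what force $H^{\leq 1}(\frak{c}) = 0$. It is worth emphasizing that $\Phi$ is \emph{not} a quasi-isomorphism --- $\frak{c}$ is generally far from acyclic --- so the argument does not proceed by homotopy invariance of $\MC_\bullet$ alone; rather, everything hinges on confining the discrepancy to cohomological degrees $\geq 2$, where it is invisible to the Maurer--Cartan space.
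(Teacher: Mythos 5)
A preliminary caveat: the paper does not prove this statement at all --- it is imported verbatim as Proposition 5.0.1 of \cite{Cos2} and immediately applied --- so there is no in-paper proof to compare against. Your architecture is nonetheless the standard one and, as far as I can tell, the same as Costello's: split over the underlying smooth map $g$, realize the comparison as the inclusion of the $\L8$ ideal $\frak{b}$ into $\frak{a}$, identify the quotient $\frak{c}$ with $\sI_\cN(N) \otimes \Gamma(M, g^*\fg^{red})$ up to filtration, and use the two positivity hypotheses to push $H^*(\frak{c})$ into degrees $\geq 2$, where it is invisible to $\MC_\bullet$.

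Two steps are glossed in a way that hides where the hypotheses actually bite. First, for the fiber of $\MC_\bullet(\frak{a}) \to \MC_\bullet(\frak{c})$ over $0$ to \emph{be} $\MC_\bullet(\frak{b})$, you need the curving of $\frak{a}$ to land in $\frak{b}$, i.e.\ the induced curving on $\frak{c}$ must vanish so that $0$ is in fact a Maurer--Cartan element of the base; otherwise there is no basepoint and the fibration argument has nothing to stand on. This holds precisely because the underlying map $f = g \circ \pi_M$ is constant along $N$: the curving of $f^*\fg$ then lies in $f^*\fg \cdot \pi_M^{-1}\Omega^{\geq 1}_M$, which multiplies into the summand $\pi_M^{-1}\sI_\cM \otimes \pi_N^{-1}\sO_\cN$ of $\sI_{\cN \times \cM}$, i.e.\ into $\frak{b}$. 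This is exactly where the restriction to $\widehat{\bB\fg^\cN}$ rather than all of $\bB\fg^\cN$ is used, and it should be said explicitly. Second, contractibility of $\MC_\bullet(\frak{c})$ does not follow from one application of $\pi_n \cong H^{1-n}$: that formula does not compute $\pi_0$, and $\frak{c}$ is neither abelian nor honestly a tensor-product complex before passing to associated gradeds. You should run the artinian induction up the filtration of $\frak{c}$ (by powers of $\sI_\cN$ together with the lower central series), at each stage using $H^1(\Gr) = 0$ to kill components and $H^{\leq 0}(\Gr) = 0$ to kill higher homotopy of the abelian graded pieces via Dold--Kan, exactly as in the descent argument of appendix \ref{app:proof}. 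With those two repairs your argument is sound.
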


Let us apply this result to the case of interest, where the $\L8$ space is $B\fg_X = (X, \fg_X)$ and the nilpotent dg manifold is $S^1_{dR} = (S^1 , \Omega_{S^1})$. Then $\cL_{dR} X$ is precisely the derived space $B\fg_X^{S^1_{dR}}$. Moreover, our spaces satisfy the hypotheses of the proposition, since the sheaves $\Gr^k (\Omega^*_{S^1}) = \Omega^{k}_{S^1}$ are soft (and hence have vanishing higher cohomologies) and since $\fg_X^{red}$ is already concentrated in positive degrees. Hence, we have the following.

\begin{deflem}
The $\L8$ space
\[
\widehat{\cL_{dR}X} := (X, \Omega^*(S^1) \ot \fg_X)
\]
encodes the derived stack $\widehat{B\fg_X^{S^1_{dR}}}$.
\end{deflem}

Finally, we note the following, which makes this space particularly easy to understand.

\begin{lemma}
The $\L8$ spaces $\cL_\cB X$ and $\widehat{\cL_{dR}X}$ determine weakly equivalent derived stacks. In fact, any volume form $\omega$ on $S^1$ induces a natural transformation $\underline{\omega}: \cL_\cB X \Rightarrow \widehat{\cL_{dR}X}$ that is a homotopy equivalence on every nilpotent dg manifold.
\end{lemma}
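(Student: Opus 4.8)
The plan is to produce $\underline{\omega}$ from the most naive possible map of coefficient algebras and then transport it through the tensor construction. Recall that $\cL_\cB X = (X, \RR[\epsilon] \ot_\RR \fg_X)$ and $\widehat{\cL_{dR}X} = (X, \Omega^*(S^1) \ot_\RR \fg_X)$, so the two curved $\L8$ algebras differ only in the scalar factor $\RR[\epsilon] = H^*(S^1)$ versus $\Omega^*(S^1)$. A volume form $\omega$ representing a generator of $H^1(S^1)$ defines a map of commutative dg algebras
\[
\underline{\omega}: \RR[\epsilon] \to \Omega^*(S^1), \qquad 1 \mapsto 1, \quad \epsilon \mapsto \omega.
\]
First I would check this is well defined and a quasi-isomorphism: it respects products because $\omega \wedge \omega = 0$ on the one-manifold $S^1$ (so $\underline{\omega}(\epsilon^2) = 0 = \underline{\omega}(\epsilon)^2$), it commutes with differentials since $\RR[\epsilon]$ carries the zero differential and $d\omega = 0$, and it is a quasi-isomorphism because $1 \mapsto [1]$ and $[\omega]$ generates $H^1(S^1) \cong \RR$.

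Second, I would tensor with $\fg_X$ over $\RR$. Because the $\L8$ brackets on $A \ot_\RR \fg_X$ factor through the commutative multiplication on the scalar factor $A$, and because $\underline{\omega}$ is a unital algebra map (sending $1$ to $1$, hence preserving the curvature term $1 \ot \ell_0$), the map $\underline{\omega} \ot \mathrm{id}_{\fg_X}$ is a strict morphism of curved $\L8$ algebras over $\Omega^*_X$. This is precisely a map of $\L8$ spaces $\cL_\cB X \to \widehat{\cL_{dR}X}$ over $X$, and it induces the asserted natural transformation $\underline{\omega}: \cL_\cB X \Rightarrow \widehat{\cL_{dR}X}$ by pushing forward Maurer-Cartan solutions.

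Third, I would check the objectwise weak equivalence. Fix a nilpotent dg manifold $\cM = (M, \sO_\cM)$. Because $\underline{\omega} \ot \mathrm{id}$ is the identity on the $\fg_X$-factor, it preserves the underlying smooth map $f \in C^\infty(M,X)$, so on each summand of the coproduct decomposition of the functor of points it restricts to the map of Maurer-Cartan spaces
\[
\MC_\bullet\!\big(\RR[\epsilon]\ot (f^*\fg_X \ot_{\Omega^*_M}\sI_\cM)\big) \longrightarrow \MC_\bullet\!\big(\Omega^*(S^1)\ot (f^*\fg_X \ot_{\Omega^*_M}\sI_\cM)\big)
\]
induced by $\underline{\omega} \ot \mathrm{id}$. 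Both source and target are nilpotent $\L8$ algebras, since $\sI_\cM$ is a nilpotent ideal, and the map respects the filtration by powers of $\sI_\cM$. On the associated graded the higher brackets (which strictly raise $\sI_\cM$-filtration) drop out, so the differential on each graded piece is $d_A \ot 1 + 1 \ot \ell_1$; there the map is $\underline{\omega}$ tensored over the field $\RR$ with a fixed complex, hence a quasi-isomorphism by K\"unneth. Thus $\underline{\omega} \ot \mathrm{id}$ is a filtered quasi-isomorphism of nilpotent $\L8$ algebras.

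Finally, I would invoke the homotopy invariance of the Maurer-Cartan functor (Getzler's theorem \cite{Getzler}): a filtered quasi-isomorphism of nilpotent $\L8$ algebras induces a homotopy equivalence of Maurer-Cartan spaces. Applying this on each summand shows that $\underline{\omega}(\cM)$ is a weak equivalence of simplicial sets for every $\cM$, which is exactly the claim. The main obstacle is this last step: one must confirm that the invariance theorem applies in the curved, filtered, topological setting at hand, and that tensoring with $\sI_\cM$ genuinely yields a nilpotent $\L8$ algebra with a complete filtration on which $\underline{\omega} \ot \mathrm{id}$ is a quasi-isomorphism graded-piece-by-graded-piece; the remaining verifications are routine unwindings of the definitions.
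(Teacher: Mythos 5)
Your proposal is correct and follows essentially the same route as the paper: define the quasi-isomorphism $\phi_\omega: \RR[\epsilon] \to \Omega^*(S^1)$ sending $\epsilon \mapsto \omega$, tensor with $\fg_X$ to get a strict map of curved $\L8$ algebras over $\Omega^*_X$, and then observe that for each $f: M \to X$ the induced map of nilpotent curved $\L8$ algebras (after tensoring with $\sI_\cM$) is a quasi-isomorphism and hence a homotopy equivalence of Maurer--Cartan spaces. Your filtered-quasi-isomorphism justification of that last step is more detailed than the paper's, which simply cites the appendix on the Maurer--Cartan functor, but the argument is the same.
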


\begin{proof}
The first statement directly follows from the second. Note that a choice of $\omega$ gives a quasi-isomorphism of commutative dg algebras 
\[
\begin{array}{cccc}
\phi_\omega: &\RR[\epsilon] & \to & \Omega^*(S^1) \\
&a+ b\epsilon & \mapsto & a + b \omega
\end{array}.
\]
Hence we obtain a map of $\L8$ spaces 
\[
(id_X, \phi_\omega): \cL_\cB X = (X, \fg_X \ot \RR[\epsilon]) \to (X, \fg_X \ot \Omega^*(S^1)) = \widehat{\cL_{dR}X}
\]
that is a quasi-isomorphism of sheaves of curved $\L8$ algebras. This map induces the desired natural transformation between their Maurer-Cartan functors. In particular, given a nilpotent dg manifold $\cM = (M,\sO_\cM)$, for each smooth map $f: M \to X$, we obtain a quasi-isomorphism of nilpotent curved $\L8$ algebras 
\[
f^* \phi_\omega: f^* (\fg_X \ot \RR[\epsilon]) \ot_{\Omega^*_M} \sI_\cM \to f^* (\fg_X \ot \Omega^*(S^1)) \ot_{\Omega^*_M} \sI_\cM
\] 
and hence a homotopy equivalence of Maurer-Cartan spaces (see appendix \ref{app:MC}).
\end{proof}

\subsection{Symplectic structures and the AKSZ construction}

We now explain how to equip $\widehat{\cL_{dR}X}$ with a $-1$-symplectic structure when $X$ is $0$-symplectic. Our approach is a version of the AKSZ construction of Alexandrov, Kontsevich, Schwarz, and Zaboronsky \cite{AKSZ}, who introduced a general method for constructing shifted symplectic structures on mapping spaces. Their motivation was to produce ``classical Batalin-Vilkovisky theories," where the BV (Batalin-Vilkovisky) formalism is a homological approach to field theory. The language of $\L8$ spaces is well-suited to applying the AKSZ approach, as we will demonstrate in the case of the derived loop space $\widehat{\cL_{dR}X}$. Indeed, in \cite{GG}, we start with this space and then quantize it using the BV formalism, as developed in \cite{Cos1}. 

Before equipping the derived loop space with a $-1$-symplectic structure, we provide a quick gloss of the AKSZ construction as motivation. The basic idea is simple. Let the $d$-dimensional source dg manifold $\Sigma$ of a sigma model come equipped with a volume form $\dvol$ and let the target dg manifold $X$ come equipped with a $k$-symplectic structure $\omega$. Then the mapping space ({\it aka} fields) $\cF := \Maps(\Sigma,X)$ obtains a $k-d$-symplectic structure as follows. For a fixed map $\phi: \Sigma \to X$, the tangent space $T_\phi \cF = \Gamma(\Sigma, \phi^* T_X)$ has a natural pairing 
\[
\langle \zeta, \zeta' \rangle_\phi :=  \int_{s \in \Sigma} \phi^*\omega(s) (\zeta(s), \zeta'(s)) \dvol,
\] 
with $\zeta, \zeta \in T_\phi \cF$; and $\langle -,-\rangle_\phi$ has cohomological degree $n-d$ by construction and is skew-symmetric thanks to the skew-symmetry of $\omega$. In many situations, it will be closed and nondegenerate, as well.

This construction can be realized in our context. Suppose $X$ is a $0$-symplectic manifold (i.e., a symplectic manifold in the standard sense) with symplectic form $\omega \in \Omega^2(X)$. Suppose we fix a 1-form $\nu \in \Omega^1(S^1)$ that is not exact, i.e., $\int_{S^1} \nu \neq 0$. 

By corollary 11.3 of \cite{GG}, we know that the $\infty$-jet $J(\omega)$ is a cocycle in both $\Omega^2(B\fg_X)$ and $\Omega^2_{cl}(B\fg_X)$. (This is a special feature of this situation.) Consider the pairing
\[
\begin{array}{cccc}
\Omega_{\omega,\nu}:&[\fg_X \ot \Omega^*(S^1)]^{\ot 2} &\to &\Omega^*(X)\\
&(Z \ot \alpha) \ot (Z' \ot \alpha') &\mapsto &{\displaystyle \int_{\theta \in S^1} }J(\omega)(Z \ot \alpha(\theta), Z' \ot \alpha'(\theta)) \wedge \nu(\theta)
\end{array},
\]
which is a direct application of the AKSZ approach. Note that it is a cocycle in $\Omega^2(\fg_X \ot \Omega^*(S^1))$ by construction.

\begin{lemma}
The 2-form $\Omega_{\omega, \nu}$ is a -1-symplectic form on $\widehat{\cL_{dR}X}$.
\end{lemma}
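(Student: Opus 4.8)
The plan is to verify the two defining conditions of an $n$-shifted symplectic structure (with $n = -1$) for the 2-form $\Omega_{\omega,\nu}$ on the $\L8$ space $\widehat{\cL_{dR}X} = (X, \fg_X \ot \Omega^*(S^1))$: that it is \emph{closed} (i.e.\ lifts to a cocycle in $\Omega^2_{cl}$) and that it is \emph{nondegenerate} (i.e.\ the induced map $\underline{\Omega_{\omega,\nu}}: T_{\widehat{\cL_{dR}X}} \to T^*_{\widehat{\cL_{dR}X}}[1]$ is a quasi-isomorphism). I would also track the cohomological degree to confirm it is $-1$: the 2-form $J(\omega)$ on $B\fg_X$ has degree $0$, and integrating against $\nu \in \Omega^1(S^1)$ over the circle shifts degree by $-1$ (this is the AKSZ degree shift $k - d = 0 - 1$), so $\Omega_{\omega,\nu}$ has cohomological degree $-1$ as required.

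First I would record, from the definitions in the excerpt, that $\fg_X \ot \Omega^*(S^1) \cong \fg_X \oplus \fg_X[-1]$ after choosing the splitting $\Omega^*(S^1) \simeq \RR[\epsilon]$ induced by a volume form, so that the tangent bundle is $(\fg_X \oplus \fg_X[-1])[1] \cong \fg_X[1] \oplus \fg_X$, i.e.\ $T_{B\fg_X}[1] \oplus \fg_X[1][-1]$. The point is that the AKSZ pairing $\Omega_{\omega,\nu}$ pairs a tangent vector in the ``$\RR$-component'' against a tangent vector in the ``$\epsilon$-component'' via $J(\omega)$ and $\int_{S^1}\nu \neq 0$. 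So I would reduce nondegeneracy to the nondegeneracy of $\underline{J(\omega)}: T_{B\fg_X} \to T^*_{B\fg_X}$, which holds because $X$ is $0$-symplectic (this is exactly what the cited corollary 11.3 of \cite{GG} guarantees: $J(\omega)$ is a nondegenerate closed 2-form on $B\fg_X$). Concretely, the matrix of $\underline{\Omega_{\omega,\nu}}$ in the two-component decomposition is off-diagonal with entries $\pm(\int_{S^1}\nu)\,\underline{J(\omega)}$, hence a quasi-isomorphism iff $\underline{J(\omega)}$ is, and the shift by $[1]$ is accounted for precisely by the $[-1]$ on one of the two copies of $\fg_X$.

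For closedness, I would exhibit a lift of $\Omega_{\omega,\nu}$ to a cocycle $\widetilde{\Omega}_{\omega,\nu} \in \Omega^2_{cl}(\widehat{\cL_{dR}X})$. The natural candidate comes from applying the AKSZ integration $\int_{S^1}(-)\wedge\nu$ to the closed lift $\widetilde{J(\omega)} \in \Omega^2_{cl}(B\fg_X)$ furnished by corollary 11.3 of \cite{GG}, since integrating a closed form over $S^1$ against the fixed form $\nu$ commutes with the de Rham differential $d_{DR}$ on the $\L8$-space side; the key input is again that $J(\omega)$ is already closed on $B\fg_X$, which is the ``special feature of this situation'' noted before the lemma. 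Here one must check that the AKSZ transgression map $\int_{S^1}(-)\wedge\nu$ is a chain map from the truncated de Rham complex of $B\fg_X$ to that of $\widehat{\cL_{dR}X}$; this follows because $\nu$ is a fixed (closed, since it is a top form on $S^1$) coefficient and the total differential $d_{DR}$ is built from the internal $\fg_X$-module differential plus the exterior derivative $d_{dR}$, both of which pass through the integral.

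The main obstacle I anticipate is the closedness/lift step rather than nondegeneracy, since nondegeneracy reduces cleanly to the already-established nondegeneracy of $J(\omega)$ on $B\fg_X$. Specifically, the delicate point is verifying that the AKSZ construction produces a genuine cocycle in the truncated complex $\Omega^2_{cl} = \bigl(\bigoplus_{k\geq 2}\Omega^k_{B\fg}[-k+2], d_{DR}\bigr)$ and not merely a closed element of $\Omega^2_{B\fg}$ --- i.e.\ one must control the higher components in $\Omega^{>2}$ of the lift and confirm that integration against $\nu$ respects the shifted grading and the full differential, including the interaction between the ``external'' $S^1$-direction and the ``internal'' $\fg_X$-cohomological grading. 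I would handle this by working with the explicit $\infty$-jet model: since $J(\omega)$'s closed lift is given jet-wise and the $S^1$-integration is $\Omega^*_X$-linear and commutes with both pieces of $d_{DR}$, the transgressed lift is automatically a cocycle, so the only real content is bookkeeping the degree shift correctly.
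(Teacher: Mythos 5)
Your proposal is correct and takes essentially the same route as the paper: closedness is a direct computation using the transgression $\int_{S^1}(-)\wedge\nu$ of the closed lift of $J(\omega)$ supplied by corollary 11.3 of \cite{GG}, and nondegeneracy reduces to the nondegeneracy of $\underline{J(\omega)}$ on $T_{B\fg_X}$ together with $\int_{S^1}\nu\neq 0$. Your explicit off-diagonal description of $\underline{\Omega_{\omega,\nu}}$ under the quasi-isomorphism $\RR[\epsilon]\to\Omega^*(S^1)$ is simply a more detailed rendering of the paper's one-line cohomological argument.
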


\begin{proof}
Checking that it is a closed 2-form is a direct computation. It remains to show that the pairing is nondegenerate. Observe that $J(\omega)$ is a nondegenerate pairing on $T_{B\fg_X}$, thanks to corollary 11.3 of \cite{GG}. (Indeed, the horizontal sections of jets of vector fields on $X$ is precisely the sheaf of smooth vector fields on $X$.) As $\nu$ is cohomological nontrivial on $S^1$, we see that $\Omega_{\omega,\nu}$ is nontrivial at the level of cohomology as well.
\end{proof}

Similar arguments should work for many $\L8$ spaces of this form (i.e., an $\L8$ space that arises as completion of a mapping stack along some set of maps). Thus, this formalism is a natural place to deploy the AKSZ construction.


\appendix

\section{Homotopy limits and cosimplicial simplicial sets}\label{app:holim} 

In a category with a notion of weak equivalence (such as topological spaces with weak homotopy equivalence or chain complexes with quasi-isomorphism), the {\em homotopy} limit of a diagram is better behaved than the ordinary limit with regards to questions about objects up to weak equivalence. There is an extensive literature that motivates, defines, and constructs homotopy limits in a variety of contexts, and we recommend \cite{Dugger} and \cite{Shulman} as nice places to start reading. (For a more succinct discussion close to the style of our overview, see \cite{DouglasChap}.)

In our context, we can take advantage of the existence of several explicit, well-behaved formulas for the homotopy limit of a diagram in simplicial sets.\footnote{We emphasize that a homotopy limit should be defined by a homotopic version of the usual universal property for a limit. These formulas provide an explicit means for constructing an object that provides such a homotopy limit.} A standard source is the foundational work of Bousfield and Kan \cite{BK} (a recent, thorough source is \cite{Hirschhorn}). Of course, it is not always obvious how the different formulas are related. We give a quick discussion of the ideas behind the formulas we use.

To motivate all this formalism, we remind the reader of its appearance in this paper. Let $\cF$ be a simplicial presheaf and $\fV = \{V_i \; : \; i \in I \}$ a cover of $U$.  Applying the presheaf $\cF$ levelwise to the \v{C}ech nerve $\check{C}\fV_\bullet$, we obtain a cosimplicial simplicial set
\[
\check{C}^\bullet(\fV; \cF) := 
\xymatrix{
{\prod_{i_{0}} \cF ( V_{i_{0} \in I}})
  \ar@<0.7ex>[r]^-{d^{0}}
  \ar@<-0.7ex>[r]_-{d^{1}} 
&
 {\prod_{i_{0}, i_{1} \in I} \cF (V_{i_{0}i_{1}}})
  \ar@<1ex>[r]^-{d^{0}}
  \ar[r]|-{d^{1}}
  \ar@<-1ex>[r]_-{d^{2}} 
&
 {\prod_{i_{0}, i_{1}, i_{2} \in I} \cF ( V_{i_{0}i_{1}i_{2}}})
  \ar@<2ex>[r]^-{d^{0}}
  \ar@<.66ex>[r]|-{d^{1}}
  \ar@<-.66ex>[r]|-{d^{2}} 
  \ar@<-2ex>[r]_-{d^{3}}
&
{\ldots}
},
\]
called the {\it \v{C}ech cosimplicial diagram} associated to $\fV$ with coefficients in $\cF$.  The standard notation is
\[
\check{C}^p (\fV ; \cF) := \prod_{i_0 \cdots i_p} \cF (V_{i_0 \dotsb i_p}),
\]
where $V_{i_0 \dotsb i_p} = V_{i_0} \cap \cdots \cap V_{i_p}$. We want to compute 
\[
\holim_{\Delta} \check{C}^\bullet (\fV; \cF),
\]
so we need to define a homotopy limit over a cosimplicial diagram. 

\subsection{A quick overview of the big picture.}

Let $I$ denote a small category, and let $\Fun(I,\cC)$ denote the category of functors from this index category $I$ to $\cC$. We call such a functor $X$ an $I$-{\em diagram} in $\cC$ and so call $\Fun(I,\cC)$ a {\em diagram category}. Recall that the limit of a diagram $X$ arises from an adjunction
\[
\C: \cC \leftrightarrows \Fun(I,\cC): \lim_I
\]
where for $x \in \cC$, $\C x$ denotes the diagram 
\[
i \mapsto x \text{ and } (i \to i') \mapsto (x \overset{1_x}{\to} x),
\]
{\em aka} the constant diagram with value $x$. It is straightforward to unravel this definition to the definition by the terminal cone.

Now suppose $\cC$ is a category with a notion of weak equivalence. Let $\Ho(\cC)$ denote the homotopy category given by localizing at these weak equivalences. We equip $\Fun(I,\cC)$ with an associated notion of weak equivalence: a map of diagrams $f:X \to Y$ is a weakly equivalence if the map on each object $f(i): X(i) \to Y(i)$ is a weak equivalence. Let $\Ho(\Fun(I,\cC))$ denote the associated homotopy category.

Observe that the constant functor $\C$ preserves weak equivalences and hence induces a functor $\Ho(\C): \Ho(\cC) \to \Ho(\Fun(I,\cC))$. We denote the right adjoint of $\Ho(\C)$ --- if it exists --- by $\Ho-lim_I$.\footnote{We use this ugly notation to avoid conflicting with commonly-used notations.} {\it Note that $\Ho-lim_I$ does not arise from $\lim_I$ --- it is not ``$\Ho(\lim_I)$" --- except in very special situations, because $\lim_I$ need not preserve weak equivalences.}

Not only do we want to construct $\Ho-lim_I$, but we want to go a step further and produce a functor $\holim_I: \Fun(I,\cC) \to \cC$ that induces $\Ho-lim_I$ at the level of homotopy categories.

Model categories are a well-established approach (among others) to accomplishing such goals.\footnote{Our discussion only assumes a basic familiarity with model categories. For more extensive discussion, there are many lovely expositions, such as \cite{DwyerSpalinski} or \cite{Hovey}.} One useful feature of model categories is that they provide another tool for constructing functors between homotopy categories: given a Quillen adjunction
\[
L: \cC \leftrightarrows \cD: R
\]
between model categories, one obtains an adjunction
\[
\LL L : \Ho(\cC) \leftrightarrows \Ho \cD: \RR R
\]
between the homotopy categories. We now use this tool to find our desired functor $\holim_I$, following Bousfield and Kan.

We are only interested here in the case where $\cC$ is the category $s\!\Sets$ of simplicial sets, with the standard {\it aka} Quillen model structure. Bousfield and Kan introduced a model structure on the diagram category $\Fun(I,\ssets)$, now known as the {\em projective} model structure, where weak equivalences and fibrations are both objectwise.

The trick to making $\holim_I$ is to find a functor $\C': \ssets \to \Fun(I,\ssets)$ such that
\begin{enumerate}
\item[(1)] there is a Quillen adjunction $\C': \ssets \leftrightarrows \Fun(I,\ssets): F$, and 
\item[(2)] $\C'$ is {\em not} $\C$ but $\LL \C' = \Ho(\C)$, so they agree at the level of homotopy categories.
\end{enumerate}
We then know that $\RR F$ is $\Ho-lim_I$, so we can view the right adjoint $F$ as the desired $\holim_I$. In particular, on an objectwise-fibrant diagram $X$, we know that $\holim_I X$ is a simplicial set with the correct homotopy type.

We now construct $\C'$. For any $i \in I$, we denote its over-category by $I_{/i}$. Let $N(I_{/i})$ denote the nerve of $I_{/i}$, which is a contractible simplicial set. We define a functor
\[
\begin{array}{cccc}
\C' = I_{/-} \times -: & \ssets & \to & \Fun(I,\ssets) \\
 & x & \mapsto & (i \mapsto x \times N(I_{/i}))
\end{array}.
\]
By construction, the diagram $\C'(x)$ assigns to every $i \in I$ a simplicial set $\C'(x)(i) = x \times N(I_{/i})$ that is weakly equivalent to $x$. Indeed, there is a canonical natural transformation $\eta: \C \Rightarrow \C'$ because $N(I_{/i})$ has a canonical basepoint $*$ coming from the terminal object $i \to i$ in $I_{/i}$ and so $\eta(x)$ is the canonical map $x \hookrightarrow x \times * \subset x \times N(I_{/i})$. Then $\eta$ is an objectwise weak equivalence. 

Recall that $\ssets$ is enriched over itself, and there is an inner hom that we denote $\Maps$, so 
\[
\ssets(x \times y, z) \cong \ssets(x, \Maps(y,z)). 
\]
It is immediate from this adjunction that the set of $n$-simplices of $\Maps(y,z)$ is $\ssets(\triangle[n] \times y,z)$. Piggybacking on this construction, we obtain a right adjoint to the functor $\C'$:
\[
\begin{array}{cccc}
\Maps(I_{/-}, -): & \Fun(I,\ssets) & \to & \ssets \\
 & X & \mapsto & ([n] \mapsto Nat(I_{/-} \times \C'\triangle[n],X),
\end{array}
\]
where $Nat(A,B)$ denotes the set of natural transformations between the diagrams $A$ and $B$.

\begin{prop}\label{prop:universalprop}
The adjunction 
\[
\C' = I_{/-} \times -: \ssets \leftrightarrows \Fun(I,\ssets): \Maps(I_{/-},-) 
\]
is a Quillen adjunction, using the projective model structure on the right hand side.
\end{prop}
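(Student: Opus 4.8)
The plan is to verify the Quillen adjunction by checking that the \emph{left} adjoint $\C'$ preserves cofibrations and trivial cofibrations. This is the more convenient of the two equivalent conditions: since weak equivalences and fibrations in the projective model structure on $\Fun(I,\ssets)$ are detected objectwise, analyzing the right adjoint $\Maps(I_{/-},-)$ would force us to understand its value as an end (a totalization), whereas the left adjoint has a transparent form.

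The first move is to recognize $\C'$ as tensoring with a fixed diagram. Writing $Q$ for the diagram $i \mapsto N(I_{/i})$ and letting $\otimes$ denote the objectwise tensoring of $\Fun(I,\ssets)$ over $\ssets$, namely $(K \otimes X)(i) = K \times X(i)$, we have $\C'(K) = K \otimes Q$ directly from the definition of $\C'$. The projective model structure on $\Fun(I,\ssets)$ is a simplicial model category for precisely this tensoring (with the objectwise cotensor and mapping spaces), so it satisfies the pushout-product (SM7) axiom: for a cofibration $i \colon A \to B$ in $\ssets$ and a cofibration $j \colon X \to Y$ in $\Fun(I,\ssets)$, the induced map
\[
A \otimes Y \sqcup_{A \otimes X} B \otimes X \longrightarrow B \otimes Y
\]
is a projective cofibration, trivial if either $i$ or $j$ is. This is standard and can be cited from \cite{Hirschhorn}.

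The key input is that $Q = N(I_{/-})$ is projectively cofibrant. I would justify this by the skeletal filtration of the nerve: the passage from the $(n-1)$-skeleton to the $n$-skeleton of $Q$ is a pushout of a coproduct of generating projective cofibrations $F_i(\partial\triangle[n] \hookrightarrow \triangle[n])$ (where $F_i$ is the left adjoint to evaluation at $i$), indexed by the nondegenerate $n$-simplices of the various $I_{/i}$. Hence $Q$ is a projective cell complex, so cofibrant, and in particular the map $\emptyset \to Q$ is a cofibration. This is exactly the classical fact underlying the Bousfield--Kan construction, and can be cited from \cite{BK} as well.

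Granting this, I apply the SM7 axiom with $j$ taken to be $\emptyset \to Q$. Since $A \otimes \emptyset = B \otimes \emptyset = \emptyset$, the pushout-product collapses to $A \otimes Q \to B \otimes Q$, which is exactly $\C'(i)$. Thus $\C'(i)$ is a projective cofibration whenever $i$ is a cofibration in $\ssets$, and a trivial cofibration whenever $i$ is one (taking $i$ trivial forces the pushout-product to be trivial). This shows $\C'$ is left Quillen. The main obstacle is the cofibrancy of $Q$: everything else is formal once the simplicial model structure is in place, so the real content is that the nerves of the over-categories assemble into a projectively cofibrant diagram. For this reason I would isolate that claim and either cite it or supply the skeletal-filtration argument; an essentially equivalent route is to check $\C'$ directly on the generating cofibrations $\partial\triangle[n] \hookrightarrow \triangle[n]$ and horn inclusions $\Lambda^k[n] \hookrightarrow \triangle[n]$, but that reduction leans on the same cofibrancy fact.
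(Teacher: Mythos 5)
The paper does not actually supply a proof of this proposition: it is stated as part of the background exposition on homotopy limits, with Bousfield--Kan \cite{BK} cited as the standard reference, so there is no argument in the text to compare yours against. That said, your proof is correct and is essentially the canonical one. The two ingredients --- that the projective model structure on $\Fun(I,\ssets)$ is simplicial for the objectwise tensoring $(K \otimes X)(i) = K \times X(i)$, and that the diagram $Q = N(I_{/-})$ is projectively cofibrant via its skeletal filtration --- are both standard, and specializing the pushout-product axiom to $j\colon \emptyset \to Q$ does collapse it to $\C'(i) = i \otimes Q$, giving exactly the left Quillen condition. You have also correctly identified the cofibrancy of $Q$ as the only nonformal input; this is precisely the classical fact that $N(I_{/-})$ is a ``free'' diagram in the sense of Bousfield--Kan, serving as a projectively cofibrant replacement of the constant point diagram. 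One small imprecision: the $n$-cells in the skeletal filtration are most cleanly indexed by nondegenerate chains $a_0 \to \cdots \to a_n$ in $I$ itself, with each such chain contributing one copy of $F_{a_n}(\partial\triangle[n] \hookrightarrow \triangle[n])$ (since the $n$-simplices of $N(I_{/i})$ lying over a fixed chain form the representable set $I(a_n,i)$), rather than by simplices of the various $I_{/i}$ separately; but this is a matter of bookkeeping and does not affect the argument.
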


We thus define the {\em homotopy limit} $\holim_I$ to be $\Maps(I_{/-},-)$. Its right derived functor is $\Ho-lim_I$.

Our main interest is when $I = \Delta$, so that $\Fun(I,\ssets)$ is precisely $\cssets$, the cosimplicial simplicial sets.

\begin{definition}
The \emph{homotopy limit} of a cosimplicial diagram of simplicial sets $X \in \cssets$ is 
\[
\holim_\Delta X = \Maps(\Delta_{/-},X).
\]
\end{definition}

We emphasize that this is not the only definition used in the literature, but any construction/definition should provide a weakly equivalent simplicial set (so long as we agree on the notion of weak equivalence of diagrams).

\subsection{An alternative approach: fat totalization}

When $I = \Delta$, one can give a different functor, known as {\em fat totalization} and denoted $\ftot$, whose right derived functor $\RR \ftot$ is $\Ho-lim_\Delta$. In other words, we will recall a different approach to the homotopy limit which has its foundations in the work of Segal \cite{SegalCohom}.

\begin{remark}
Costello uses this version of $\holim$ in his proof, which is why we include this brief discussion.
\end{remark}

Let $\Delta^{inj}$ denote the subcategory of $\Delta$ with the same objects but whose morphisms are just the injections. The superscript $inj$ is to indicate that it is generated by the face inclusions $[n-1] \hookrightarrow [n]$. Let $\iota: \Delta^{inj} \to \Delta$ denote the inclusion functor. We immediately obtain a Quillen adjunction
\[
\iota_!: \Fun(\Delta^{inj},\ssets) \leftrightarrows \Fun(\Delta,\ssets): \iota^*
\]
where $\iota_!$ is the left Kan extension along $\iota$. Note that it is a Quillen adjunction for the projective model structures because the forgetful functor $\iota$ clearly preserves objectwise fibrations and weak equivalences.

By our work above, we already have a Quillen adjunction
\[
{\Delta^{inj}}_{/-}: \ssets \leftrightarrows \Fun(\Delta^{inj},\ssets): \Maps({\Delta^{inj}}_{/-},-). 
\]
Let $\ftot$ be the composition $\Maps({\Delta^{inj}}_{/-},-) \circ \iota^*$. Composing these Quillen adjunctions, we obtain a Quillen adjunction
\[
\iota_! \circ {\Delta^{inj}}_{/-}: \ssets \leftrightarrows \Fun(\Delta,\ssets): \ftot. 
\]
One can check that $\iota_! \circ {\Delta^{inj}}_{/-}$ is weakly equivalent to $C$, and hence $\ftot$ is weakly equivalent to $\holim_\Delta$. 

This construction is called \emph{fat} totalization to contrast it with totalization. The {\it totalization} of $X^\bullet_\bullet \in \cssets$ is the simplicial set given by
\[
\Tot X^\bullet_\bullet := \Maps(\triangle^\bullet_\bullet, X^\bullet_\bullet).
\]
where $\triangle^\bullet_\bullet$ is the {\it cosimplicial standard simplex} whose simplicial set of $n$-cosimplices $\triangle^n_\bullet$ is the standard $n$-simplex $\triangle[n]$. Totalization is thus the dual notion to geometric realization.

\subsection{Yet another definition and its use in the linear setting}

There is another model category structure that is often used on cosimplicial simplicial sets, known as the Reedy model structure (Bousfield and Kan also work with this structure).  For $X^\bullet_\bullet \in \cssets$ that is (co)fibrant in this model structure, we say $X^\bullet_\bullet$ is \emph{Reedy-(co)fibrant}. The cosimplicial standard simplex $\triangle^\bullet_\bullet$ is a cofibrant replacement for the constant functor $C$ in $\cssets$, so we use it to define yet another version the homotopy limit, which we denote by $\holim^R_\Delta$.

\begin{prop}[XI.4.5, \cite{BK}]\label{prop:BK}
For $X^\bullet_\bullet$ a Reedy-fibrant cosimplicial simplicial set, the natural map $\Delta_{/-} \to\triangle^\bullet_\bullet$ induces a weak equivalence
\[
\Tot X^\bullet_\bullet \to \holim^R_\Delta X^\bullet_\bullet.
\]
\end{prop}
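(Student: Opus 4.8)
The plan is to realize both $\Delta_{/-}$ and $\triangle^\bullet_\bullet$ as Reedy-cofibrant cosimplicial resolutions of the constant point-diagram, and then to invoke homotopy invariance of the mapping-space functor $\Maps(-, X^\bullet_\bullet)$ for $X^\bullet_\bullet$ Reedy fibrant. First I would check that the natural map $\Delta_{/-} \to \triangle^\bullet_\bullet$ is an objectwise weak equivalence. At level $n$ it is the ``last vertex'' map $N(\Delta_{/[n]}) \to N([n]) = \triangle[n]$, which sends an object $\sigma\colon [k] \to [n]$ to $\sigma(k) \in [n]$; this is visibly functorial in $\sigma$ and natural in $[n]$. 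Both sides are contractible --- the source because $\Delta_{/[n]}$ has the terminal object $\mathrm{id}_{[n]}$, the target because $\triangle[n]$ is a simplex --- so the map is a levelwise, hence Reedy, weak equivalence.

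Next I would record the cofibrancy of the two resolutions. The cosimplicial standard simplex $\triangle^\bullet_\bullet$ is Reedy cofibrant: its $n$th latching map is the boundary inclusion $\partial\triangle[n] \hookrightarrow \triangle[n]$, a monomorphism and hence a cofibration of simplicial sets. The resolution $\Delta_{/-}$ is projectively cofibrant, since it is $\C'(\triangle[0])$ for the left Quillen functor $\C'$ of Proposition \ref{prop:universalprop}; as every projective cofibration is a Reedy cofibration, $\Delta_{/-}$ is Reedy cofibrant as well. Thus $\Delta_{/-} \to \triangle^\bullet_\bullet$ is a weak equivalence between Reedy-cofibrant objects.

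The crux is then the homotopy-invariance input: when $X^\bullet_\bullet$ is Reedy fibrant, the functor $\Maps(-, X^\bullet_\bullet)$ carries weak equivalences between Reedy-cofibrant cosimplicial simplicial sets to weak equivalences of simplicial sets. This is Ken Brown's lemma applied to the fact that the Reedy model structure on $cs\!\Sets$ is a simplicial model category, so that $\Maps(-,-)$ is a Quillen bifunctor; concretely, one checks the pushout-product axiom, i.e. that the pushout-product of a Reedy cofibration with a monomorphism of simplicial sets is again a Reedy cofibration, acyclic if either factor is. Granting this, applying $\Maps(-, X^\bullet_\bullet)$ to the equivalence $\Delta_{/-} \to \triangle^\bullet_\bullet$ gives the asserted weak equivalence
\[
\Tot X^\bullet_\bullet = \Maps(\triangle^\bullet_\bullet, X^\bullet_\bullet) \xrightarrow{\ \simeq\ } \Maps(\Delta_{/-}, X^\bullet_\bullet) = \holim^R_\Delta X^\bullet_\bullet.
\]

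I expect the main obstacle to be precisely this invariance input: one must use that $X^\bullet_\bullet$ is Reedy fibrant --- not merely objectwise Kan --- in order for $\Maps(-, X^\bullet_\bullet)$ to be homotopically well behaved against Reedy cofibrations, and the honest verification runs through the latching- and matching-object description together with the pushout-product axiom. An alternative, more hands-on route that sidesteps the bifunctor formalism (and is close to the original Bousfield--Kan argument) is a skeletal induction in the cosimplicial direction: filter $\triangle^\bullet_\bullet$ and $\Delta_{/-}$ by their cosimplicial skeleta, so that $\Tot X^\bullet_\bullet$ and $\holim^R_\Delta X^\bullet_\bullet$ are computed as limits of fibrant towers whose successive stages are controlled by the latching maps and the values $X^n$; the levelwise equivalence forces an equivalence at each finite stage, and one concludes by comparing the two towers and passing to the limit.
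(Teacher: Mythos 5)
The paper offers no proof of this statement: it is quoted directly from Bousfield--Kan (XI.4.5 of \cite{BK}), so there is nothing internal to compare against. Your argument is a correct and essentially standard rendering of why the result holds, and it is faithful to the logic of the cited source. The three ingredients are all right: the last-vertex map $N(\Delta_{/[n]}) \to \triangle[n]$ is a levelwise weak equivalence between contractible simplicial sets; $\triangle^\bullet_\bullet$ is Reedy cofibrant because its $n$th latching object is $\partial\triangle[n]$ (the colimit over proper injections $[m]\hookrightarrow[n]$), while $\Delta_{/-} = \C'(\triangle[0])$ is projectively cofibrant and hence Reedy cofibrant since the identity is left Quillen from the projective to the Reedy structure; and the contravariance of $\Maps(-,X^\bullet_\bullet)$ gives the map in the stated direction $\Tot \to \holim^R_\Delta$. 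You are also right to flag the homotopy-invariance step as the crux --- it is exactly where Reedy fibrancy (rather than objectwise fibrancy) of $X^\bullet_\bullet$ enters, via the pushout-product axiom for the external simplicial structure on $\cssets$. The ``hands-on'' alternative you sketch at the end, filtering by cosimplicial skeleta and comparing towers of fibrations, is in fact closer to what Bousfield and Kan actually do; the Quillen-bifunctor packaging is the cleaner modern formulation and buys you the statement with less bookkeeping, at the cost of quoting that the Reedy structure on $\cssets$ is simplicial. Either route is acceptable; there is no gap.
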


Moreover, Bousfield and Kan prove that every cosimplicial simplicial group is Reedy-fibrant (see X.4.9, \cite{BK}). Hence, we can compute homotopy limits using totalization, which is sometimes simpler. 

These results are particularly helpful in connecting the \v{C}ech complex for a sheaf of chain complexes to \v{C}ech descent using $\holim^R$. We use this result to prove our main theorem about $\L8$ spaces, so we explain what we need.

There is a conormalization functor $N^*: cAb \to Ch^+$ that turns a cosimplicial abelian group into a nonnegatively-graded cochain complex of abelian groups. It is the cosimplicial twin of the normalization functor $N_*: sAb \to Ch_+$ appearing in the Dold-Kan correspondence between simplicial abelian groups and nonnegatively-graded chain complexes. Let $\DK: Ch_+ \to sAb$ denote the adjoint functor appearing in the Dold-Kan correspondence.

Given a cosimplicial simplicial abelian group $A^\bullet_\bullet$, let $TA$ denote the total chain complex of the double complex $N^* N_* A^\bullet_\bullet$ (we use the product of groups in making the total complex). Let $\Tot A$ denote the simplicial abelian group given by totalization as a cosimplicial simplicial set. Note that this computes $\holim^R_\Delta A$, as $A$ is fibrant.

\begin{prop}[Lemma 2.2 \cite{BSS}]\label{prop:BSS}
There is a natural quasi-isomorphism $\phi: N_* \Tot A \to TA$. Equivalently, under the Dold-Kan correspondence, there is a natural weak equivalence $\phi': \Tot A \to \DK(TA)$.
\end{prop}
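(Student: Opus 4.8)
The plan is to reinterpret both complexes as models for the same homotopy limit, and to exhibit $\phi$ concretely as the map induced by the Eilenberg--Zilber shuffle map. The starting point is that, since $A$ is levelwise abelian and $\triangle^\bullet$ is a cosimplicial simplicial set, the totalization can be rewritten as an end of internal homs of simplicial abelian groups,
\[
\Tot A = \Maps(\triangle^\bullet, A) \cong \int_{[n] \in \Delta} \underline{\Hom}_{sAb}\!\left(\ZZ\triangle[n], A^n\right),
\]
where $\ZZ\triangle[n]$ is the free simplicial abelian group on the standard $n$-simplex and the end is taken over the cosimplicial variable. This rewriting converts $\Tot$ into a genuine limit in $sAb$, which is the shape in which the Dold--Kan functor behaves well.

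First I would apply $N_*$ and commute it past the end: because $N_*$ is one half of the Dold--Kan equivalence $sAb \simeq Ch_+$, it preserves all limits, so $N_*\Tot A \cong \int_{[n]} N_*\,\underline{\Hom}_{sAb}(\ZZ\triangle[n], A^n)$. The shuffle map, which makes $N_*$ lax symmetric monoidal, then induces via the evaluation pairing a natural comparison
\[
N_*\,\underline{\Hom}_{sAb}\!\left(\ZZ\triangle[n], A^n\right) \longrightarrow \underline{\Hom}_{Ch}\!\left(N_*\ZZ\triangle[n],\, N_* A^n\right),
\]
and this is the map that assembles into $\phi$. Here $N_*\ZZ\triangle[n]$ is a bounded complex of free abelian groups chain-homotopy equivalent to $\ZZ$ (the simplex being contractible), so the cosimplicial system $[n] \mapsto N_*\ZZ\triangle^\bullet$ is a levelwise-projective resolution of the constant cosimplicial object $\ZZ$. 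Taking the end over $\Delta$ of these Hom-complexes therefore performs the conormalization $N^*$ in the cosimplicial direction; by the (dual) normalization theorem the end assembles into the product total complex of the double complex $N^* N_* A$, which is exactly $TA$. The appearance of the product rather than the direct sum is forced: $\Tot$ is a limit and the cosimplicial direction is unbounded.

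The hard part will be verifying that the assembled map $\phi\colon N_*\Tot A \to TA$ is a quasi-isomorphism rather than merely a natural transformation. The shuffle-map comparison above is only a weak equivalence by virtue of the Eilenberg--Zilber theorem, so I would filter both sides by conormalization (cosimplicial) degree and compare the resulting spectral sequences. On the $E_1$-page one is reduced, in each fixed cosimplicial degree, to the assertion that the shuffle map $N_*\underline{\Hom}_{sAb}(\ZZ\triangle[n], A^n) \to \underline{\Hom}_{Ch}(N_*\ZZ\triangle[n], N_*A^n)$ is a quasi-isomorphism for the degreewise-free complex $N_*\ZZ\triangle[n]$, which is the classical Eilenberg--Zilber statement. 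The genuine technical care is needed in controlling convergence, precisely because $TA$ uses an infinite product: the filtration by conormalization degree is a complete, exhaustive, right-half-plane filtration, so the spectral sequences converge and the $E_1$-equivalence upgrades to a quasi-isomorphism of the total complexes. As a consistency check one can argue abstractly that both $\Tot A$ (via proposition \ref{prop:BK}, using that $A$ is Reedy fibrant as a simplicial group) and $\DK(TA)$ are models for $\holim_\Delta$ of the cosimplicial object, forcing them to be weakly equivalent; the value of the explicit construction above is that it produces the natural map realizing this equivalence. Naturality of $\phi$ is then automatic, since every ingredient---the end, $N_*$, the shuffle map, and conormalization---is natural in $A$.
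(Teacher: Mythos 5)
This proposition is one the paper does not prove at all: it is imported verbatim as Lemma 2.2 of \cite{BSS}, so there is no in-paper argument to compare your proposal against. Judged on its own terms, your outline is a correct and essentially standard reconstruction of how such a statement is proved. The three main moves are all sound: (i) rewriting $\Tot A = \Maps(\triangle^\bullet, A)$ as the end $\int_{[n]}\underline{\Hom}_{sAb}(\ZZ\triangle[n],A^n)$ and commuting $N_*$ past it (legitimate since $N_*$ is an equivalence and hence preserves limits); (ii) identifying the end of the chain-level Hom's with the product total complex of the conormalization --- though I would note that this identification is really a direct computation with the nondegenerate simplices of $\triangle[n]$ and the codegeneracy relations, rather than a consequence of $N_*\ZZ\triangle^\bullet$ being a resolution of $\ZZ$, which is a statement about the homotopy type of the end rather than its literal value; and (iii) the column-filtration spectral sequence comparison, where your convergence remark is the genuinely necessary ingredient: for the product total complex the filtration by cosimplicial degree is complete and exhaustive, so a levelwise quasi-isomorphism on associated graded pieces (the Hom-version of Eilenberg--Zilber, valid because $N_*\ZZ\triangle[n]$ is a bounded complex of finitely generated free abelian groups) upgrades to a quasi-isomorphism of the limits via the Milnor $\textstyle\lim^1$ sequence. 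The closing consistency check via Reedy fibrancy and proposition \ref{prop:BK} matches how the surrounding text of the paper actually uses the result. In short: no gap, but since the paper only cites \cite{BSS}, your proposal supplies an argument the paper deliberately omits rather than an alternative to one it contains.
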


Given a simplicial presheaf $\cF$ such that $\cF(U)$ is a simplicial abelian group for every open $U$, we thus see that $\holim^R_{\check{C}\fV}\,\cF$ is equivalent to the usual \v{C}ech complex for $\cF$. 

\section{Brief overview of the Maurer-Cartan functor}\label{app:MC}

We collect here some definitions and theorems about the Maurer-Cartan space $\MC_\bullet$ that we use in understanding the derived stack $\bB\fg$ from an $\L8$ space $B\fg := (X,\fg)$. The elegant paper  of Getzler \cite{Getzler} is the primary reference. Look there for the proofs and many further insights.

\begin{definition}
The \emph{lower central series} of a $\L8$ algebra $\fg$ over a commutative dg algebra $A$ is the decreasing filtration $F^k \fg$ with $F^1 \fg = \fg$ and 
\[
F^k \fg = \sum_{i_1 + \cdots + i_n = k} \ell_k(F^{i_1} \fg, \ldots, F^{i_n} \fg)
\]
for $k > 1$. (This expression means ``take the span of elements produced by applying the bracket $\ell_k$ to elements from the appropriate level of the filtrations.")

A $\L8$ algebra $\fg$ is \emph{nilpotent} if there is some positive integer $N$ such that $F^N \fg = 0$. In other words, any sufficiently long sequence of brackets vanishes.
\end{definition}

For $\fg$ a nilpotent $\L8$ algebra, the Maurer-Cartan equation $mc(\alpha) = 0$ is well-posed, since the infinite sum --- which is {\it a priori} ill-defined --- is actually a finite sum.

Let $\fg$ denote a nilpotent $\L8$ algebra. Getzler proves several properties that are useful for us.
\begin{enumerate}
\item[1)] $\MC_\bullet(\fg)$ is a Kan complex.
\item[2)] For $f: \fg \to \fh$ a levelwise surjective map of nilpotent $\L8$ algebras, the induced map $\MC_\bullet(f): \MC_\bullet(\fg) \to \MC_\bullet(\fh)$ is a fibration of simplicial sets.
\item[3)] For $\fg$ an abelian $\L8$ algebra (i.e., merely a cochain complex), $\MC_\bullet(\fg)$ is homotopy equivalent to $\DK_\bullet(\trun(\fg[1]))$, where $\trun(\fg[1])$ denotes the brutal truncation where we drop all positive-degree elements of a cochain complex.
\end{enumerate}
These results extend to the nilpotent curved $\L8$ algebras that we work with. 

It might help to know what the Maurer-Cartan space $\MC_\bullet(\fg)$ means when $\fg$ is a nilpotent Lie algebra, in the classical sense, in order to interpret the general construction. The introduction to \cite{Getzler} gives a beautiful explanation, with connections to many topics. He shows, for instance, that for $G$ a simply-connected, nilpotent Lie group and $\fg = Lie(G)$ its associated nilpotent Lie algebra, there is a natural homotopy equivalence $N_\bullet G \to \MC_\bullet \fg$. (Getzler, in fact, goes further and finds a replacement for the Maurer-Cartan functor that ``integrates'' a nilpotent $\L8$ algebra to its ``group.'' For a nilpotent Lie algebra, his functor recovers the nerve of the group on the nose.)

\section{Proof of Theorem \ref{L8IsDerived}}\label{app:proof}

We follow the structure of Costello's argument in \cite{WG2}, reworking and elaborating on several steps of the argument. Let $B\fg = (X,\fg)$ be an $\L8$ space. Recall that $\bB \fg$ denotes the associated functor of points.

The essential idea is to exploit the nilpotent ideal $\sI_\sN = \ker q$ inside $\sO_\cN$ for any nilpotent dg manifold $\cN$ on which we evaluate $\bB\fg$. Let $n$ be the integer such that $\sI_\sN^{n+1} = 0$. Then we have a tower of commutative dg $\Omega^*_N$-algebras
\[
\sO  \to \sO/\sI^n \to \sO/\sI^{n-1} \to \cdots \to \sO/\sI^2 \to \sO/\sI \cong \cinf_N
\]
and an associated tower of nilpotent dg manifolds
\[
N = \cN_1 \to \cN_2 \to \cdots \to \cN_n \to \cN,
\]
where $\cN_k = (N, \sO_\cN/\sI_\cN^k)$. We also obtain a tower of simplicial sets
\[
\bB\fg(\cN) \to \bB\fg(\cN_n) \to \cdots \to \bB\fg(\cN_2) \to \bB\fg(\cN_1) = \bB\fg(N)
\]
for any nilpotent dg manifold $\cN$. 

Our arguments will proceed by induction up this tower (this is a direct analog of artinian induction). For instance, we will show that weak equivalences go to weak equivalences stage by stage along the tower.  

\begin{remark}\label{FixTheMap}
There is an important feature of the functor $\bB\fg$ that we wish to emphasize: the simplicial set $\bB\fg(\cN)$ is a disjoint union of simplicial sets over the set of the smooth maps $f: N \to X$. In the construction of the $k$-simplices $\bB\fg(\cN)_k$, we solve for solutions of the Maurer-Cartan equation in an algebra depending on $\Omega^*(\triangle^k)$ but where the underlying smooth map is independent of $\triangle^k$. {\em From hereon in the proof of the theorem, we fix a map $f$ and simply study solutions over that $f$.} We remind the reader of this assumption periodically, for clarity's sake.
\hfill $\Diamond$ \end{remark}

We start with the base case for the induction.

\begin{lemma}\label{lem:basecaseredux}
For any smooth manifold $N$, the simplicial set $\bB\fg(N)$ is the discrete simplicial set of smooth maps $C^\infty(N,X)$. It is, in particular, a Kan complex.
\end{lemma}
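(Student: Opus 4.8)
The plan is to unwind the definition of the functor of points $\bB\fg$ on the object $N_{sm}=(N,\cinf_N)$ and observe that everything collapses because the nilpotent ideal of a smooth manifold vanishes. First I would recall that, by definition, an $n$-simplex of $\bB\fg(N)$ is a pair $(f,\alpha)$ consisting of a smooth map $f\colon N\to X$ together with a Maurer--Cartan element $\alpha$ in the nilpotent curved $\L8$ algebra $f^*\fg\ot_{\Omega^*_N}\sI_N\ot_\RR\Omega^*(\triangle^n)$. Since $N$ is an honest smooth manifold, its structure sheaf is $\cinf_N$ and the nilpotent ideal is $\sI_N=\ker(q\colon\cinf_N\to\cinf_N)=0$. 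Consequently the entire curved $\L8$ algebra displayed above is the zero algebra, for every $n$ and every choice of $f$; tensoring with the simplicial direction $\Omega^*(\triangle^n)$ changes nothing, since $0\ot\Omega^*(\triangle^n)=0$. In the zero $\L8$ algebra all the brackets $\ell_k$, including the curving $\ell_0$, vanish, so the only element is $0$ and it trivially solves the Maurer--Cartan equation. Hence, for each fixed $f$, the Maurer--Cartan space $\MC_\bullet(0)$ is the one-point simplicial set $*$.

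Next, I would invoke the decomposition of $\bB\fg$ recorded just after the definition of the functor of points: because the underlying smooth map is not permitted to vary across simplices (Remark \ref{FixTheMap}), $\bB\fg(N)$ is literally the disjoint union $\bigsqcup_{f\in C^\infty(N,X)}\MC_\bullet(f^*\fg\ot_{\Omega^*_N}\sI_N)$. Plugging in the vanishing from the previous paragraph gives
\[
\bB\fg(N)\;=\;\bigsqcup_{f\in C^\infty(N,X)}\MC_\bullet(0)\;=\;\bigsqcup_{f\in C^\infty(N,X)}*,
\]
which is exactly the discrete simplicial set on the set $C^\infty(N,X)$.

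Finally, to establish the last sentence, I would note that any discrete simplicial set is a Kan complex: for $n\geq 1$ the horn $\Lambda^n_k\subset\triangle[n]$ is connected, so any map from it into a discrete simplicial set is constant and therefore extends (constantly) over the full simplex $\triangle[n]$. There is no genuine obstacle in this argument — it is essentially a repackaging of Lemma \ref{lem:basecase} — and the only point that warrants a moment's care is verifying that the simplicial de Rham factor $\Omega^*(\triangle^n)$ cannot resurrect a nontrivial algebra once $\sI_N$ has been set to zero, which it cannot.
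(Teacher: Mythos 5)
Your argument is correct and is essentially identical to the paper's proof, which simply refers back to Lemma \ref{lem:basecase}: the nilpotent ideal of $N_{sm}$ vanishes, so the curved $\L8$ algebra is zero over every $f$ and the only Maurer--Cartan solution is $0$, giving the discrete simplicial set on $C^\infty(N,X)$. Your additional observation that discrete simplicial sets are Kan complexes is a standard fact the paper leaves implicit.
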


\begin{proof}
See lemma \ref{lem:basecase} above.
\end{proof}

We now show that each stage of the induction is well-behaved homotopically.

\begin{lemma}\label{lem:fibration}
The map 
\[
q: \bB\fg(\cN_{k+1}) \to \bB\fg(\cN_k)
\]
is a fibration.
\end{lemma}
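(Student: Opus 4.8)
The plan is to identify $q$, component by component over the underlying smooth map, with a map of Maurer--Cartan spaces to which Getzler's fibration criterion (property (2) of appendix \ref{app:MC}) applies. By remark \ref{FixTheMap}, both $\bB\fg(\cN_{k+1})$ and $\bB\fg(\cN_k)$ are disjoint unions of Maurer--Cartan spaces indexed by the smooth maps $f \in C^\infty(N,X)$, and $q$ is induced by a map of nilpotent dg manifolds that is the identity on the underlying manifold $N$; hence $q$ carries the $f$-component to the $f$-component. Since horns and simplices are connected, a coproduct of Kan fibrations is again a Kan fibration, so it suffices to fix $f$ and prove that
\[
q_f: \MC_\bullet\!\left(f^*\fg \ot_{\Omega^*_N}\sI_{\cN_{k+1}}\right) \longrightarrow \MC_\bullet\!\left(f^*\fg \ot_{\Omega^*_N}\sI_{\cN_k}\right)
\]
is a fibration, where $\sI_{\cN_j} = \sI_\cN/\sI_\cN^j$ is the nilpotent ideal of $\cN_j$ and $q_f = \MC_\bullet(\psi)$ for $\psi$ the evident quotient map of nilpotent curved $\L8$ algebras (both are nilpotent since $\sI_{\cN_j}$ is a nilpotent ideal).

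By Getzler's result --- which, as noted in appendix \ref{app:MC}, extends to the nilpotent curved setting --- it is enough to check that $\psi$ is surjective in each cohomological degree on global sections. As a map of sheaves, $\psi$ is obtained by applying $f^*\fg \ot_{\Omega^*_N} (-)$ to the surjection of $\Omega^*_N$-modules $\sI_\cN/\sI_\cN^{k+1} \twoheadrightarrow \sI_\cN/\sI_\cN^k$; since the tensor product is right exact, $\psi$ is itself a surjection of sheaves, for any $f^*\fg$ whatsoever. Its kernel is an $\Omega^*_N$-submodule of $f^*\fg \ot_{\Omega^*_N}\sI_{\cN_{k+1}}$, and in particular a sheaf of $\cinf_N$-modules.

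To pass from surjectivity of sheaves to surjectivity of global sections I would invoke softness. Every sheaf of $\cinf_N$-modules is soft, because $\cinf_N$ admits partitions of unity; thus $\ker\psi$ is soft and the short exact sequence
\[
0 \to \ker\psi \to f^*\fg \ot_{\Omega^*_N}\sI_{\cN_{k+1}} \to f^*\fg \ot_{\Omega^*_N}\sI_{\cN_k} \to 0
\]
stays exact after taking global sections. As all maps respect the cohomological grading, this gives the required degreewise surjectivity of $\psi$ on global sections. Getzler's criterion then shows $q_f$ is a Kan fibration, and reassembling over all $f$ shows $q$ is a fibration.

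The only substantive point is this last step: surjectivity of a map of sheaves does not imply surjectivity on global sections in general, and here it is rescued by the standing hypothesis that the structure sheaves of nilpotent dg manifolds --- and the $\L8$ algebras built from them --- are smooth sections of vector bundles, hence soft $\cinf_N$-modules. (One should also confirm that Getzler's fibration statement, phrased for flat $\L8$ algebras, transfers to the curved case, which is exactly the extension asserted in appendix \ref{app:MC}.)
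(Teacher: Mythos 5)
Your proof is correct, but it takes a genuinely different route from the paper's. You outsource the horn-filling entirely to Getzler's fibration criterion (property (2) of appendix \ref{app:MC}): after fixing the underlying smooth map $f$ as in remark \ref{FixTheMap} and observing that $q$ respects the decomposition into components, the lemma reduces to checking that the map of nilpotent curved $\L8$ algebras induced by $\sI_\cN/\sI_\cN^{k+1} \twoheadrightarrow \sI_\cN/\sI_\cN^{k}$ is degreewise surjective on global sections. The paper instead carries out the horn-lifting by hand: it lifts a Maurer--Cartan element along the short exact sequence of underlying graded vector spaces (the surjection splits because everything is sections of vector bundles --- your softness argument is a heavier-duty version of the same point), notes that the kernel $\sI^k/\sI^{k+1}$ is square-zero so that the failure of a lift to satisfy the Maurer--Cartan equation is a single obstruction class in the second cohomology of $f^*\fg$ tensored with that kernel and with $\Omega^*(\triangle^m)$, and then kills the obstruction using the quasi-isomorphism between $\Omega^*(\triangle^m)$ and $\Omega^*(\Lambda^m_j)$ together with the assumption that the horn already carries a solution. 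In effect the paper reproves the relevant instance of Getzler's criterion in the special case where the kernel is abelian, which keeps the argument self-contained and makes visible exactly where the square-zero structure of each stage of the tower is used; your version is shorter but leans on the assertion, recorded without proof in appendix \ref{app:MC}, that Getzler's results extend to nilpotent \emph{curved} $\L8$ algebras. Since the paper itself makes that assertion, your argument is acceptable within its conventions, and you correctly flag this dependence as the one point needing confirmation.
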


\begin{proof}[Proof of lemma]
Recall that $q$ is a Kan fibration if for any map of an $m$-simplex $s: \triangle^m \to \bB\fg(\cN_{k})$ and any map of a $m$-horn $t: \Lambda^m_j \to \bB\fg(\cN_{k+1})$ such that 
\[
\xymatrix{
\Lambda^m_j \ar[r]^t \ar[d]_\iota & \bB\fg(\cN_{k+1}) \ar[d]^q \\
\triangle^m \ar[r]_s & \bB\fg(\cN_k)
}
\]
is a commutative diagram, we can lift to a map $\tilde{s}: \triangle^m \to \bB\fg(\cN_{k+1})$ such that $t = \tilde{s} \circ \iota$ and $s = q \circ \tilde{s}$.

As explained in remark \ref{FixTheMap}, we are free to fix the underlying smooth map $f: N \to X$ in all these constructions. Let $f: N \to X$ denote that fixed map from hereon. 

Now suppose we have an $m$-simplex $(f,\alpha)$ of $\bB\fg(\cN_{k})$ and an $m$-horn of $\bB\fg(\cN_{k+1})$ agreeing with $(f,\alpha)$ modulo $\sI^k$. We want to extend to a full $m$-simplex.

As a first step, we consider the problem of lifting $\alpha$ simply as an element of {\em graded vector spaces}, ignoring the Maurer-Cartan equation. We have a short exact sequence
\[
0 \to (\sI^k/\sI^{k+1})^\sharp(N) \ot \Omega^\sharp(\triangle^m) \to (\sO/\sI^{k+1})^\sharp(N) \ot \Omega^\sharp(\triangle^m) \overset{Q}{\to} (\sO/\sI^k)^\sharp(N) \ot \Omega^\sharp(\triangle^m) \to 0
\]
of graded vector spaces. We denote this sequence
\[
0 \to K \to B \overset{Q}{\to} A \to 0
\]
for simplicity. Note that $K$ is a square-zero ideal of $B$ and hence a very simple {\em non}unital commutative dg algebra. We eventually want to study solutions to the Maurer-Cartan equation when we tensor $f^*\fg$ with each term in this sequence. Ignoring the differentials for now, observe that we get a sequence of graded vector spaces
\[
0 \to f^* \fg \ot K \to f^*\fg \ot B \overset{{\rm Id} \ot Q}{\longrightarrow} f^*\fg \ot A \to 0.
\]
Thus, given a solution $\alpha$ to the Maurer-Cartan equation in $f^*\fg \ot A$, there exist lifts $\tilde{\alpha}$ in $f^*\fg \ot B$ such that $Q \circ \tilde{\alpha} = \alpha$ because we can split $Q$ as a map of vector spaces. These lifts form a torsor for $f^*\fg \ot K$.

We now ask when such a lift $\tilde{\alpha}$ satisfies the Maurer-Cartan equation. We know $\alpha$ does, so the failure to satisfy the Maurer-Cartan equation lives in $f^*\fg \ot K$. In other words, we have an obstruction living in the second cohomology group of $f^*\fg \ot K$. By hypothesis, we know that $\tilde{\alpha}$ satisfies the Maurer-Cartan equation when restricted to the horn. As $\Omega^*(\Lambda^m_j) \to \Omega^*(\triangle^m)$ is a quasi-isomorphism, we see that the obstruction must vanish.
\end{proof}

This tower of fibrations gives us a procedure for checking a property by working our way up the tower and simply working with $\sI^k/\sI^{k+1}$ at each stage. As this is simply a cochain complex, rather than a nontrivial commutative dg algebra, the problem has become more tractable. 

\begin{prop}\label{prop:weakequiv}
Let $F: \cN \to \cN'$ be a weak equivalence of nilpotent dg manifolds. Then the map $F^*: \bB\fg(\cN') \to \bB\fg(\cN)$ is a weak equivalence of simplicial sets.
\end{prop}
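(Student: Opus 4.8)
The plan is to run the artinian induction up the tower $\bB\fg(\cN) = \bB\fg(\cN_{n+1}) \to \cdots \to \bB\fg(\cN_1) = \bB\fg(N)$, comparing it stagewise with the analogous tower for $\cN'$. Write $F = (\varphi,\phi)$, so that $\varphi : N \to N'$ is a diffeomorphism and $\Gr\phi : \varphi^{-1}\Gr\sO_{\cN'} \to \Gr\sO_\cN$ is a quasi-isomorphism. Since $\varphi$ is a diffeomorphism, precomposition gives a bijection $C^\infty(N',X) \xrightarrow{\cong} C^\infty(N,X)$, $g' \mapsto g' \circ \varphi$, and $F^*$ respects the decomposition of each $\bB\fg(-)$ into a disjoint union over this set of underlying maps. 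First I would use this to reduce, exactly as in remark \ref{FixTheMap}, to fixing one underlying smooth map and proving the claim on the corresponding component; the underlying map will be suppressed from the notation hereafter.

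The induction is over $k$, with inductive hypothesis that $F^*_k : \bB\fg(\cN'_k) \to \bB\fg(\cN_k)$ is a weak equivalence. The base case $k=1$ is immediate: by lemma \ref{lem:basecaseredux} both sides are discrete, and $F^*_1$ is the bijection induced by $\varphi$. For the inductive step I would form the commuting ladder
\[
\xymatrix{
\bB\fg(\cN'_{k+1}) \ar[r]^{F^*_{k+1}} \ar[d]_{q'} & \bB\fg(\cN_{k+1}) \ar[d]^{q} \\
\bB\fg(\cN'_k) \ar[r]_{F^*_k} & \bB\fg(\cN_k)
}
\]
in which the vertical maps are fibrations by lemma \ref{lem:fibration}. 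By the standard gluing lemma for maps of fibration sequences of simplicial sets, once the base map $F^*_k$ is a weak equivalence (the inductive hypothesis) it suffices to check that $F^*_{k+1}$ restricts to a weak equivalence on the fiber over each vertex. Since the tower is finite ($\sI^{n+1}_\cN = 0$), reaching the top rung proves the proposition.

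It remains to analyze the fibers. Fix a Maurer-Cartan vertex $\alpha$ of $\bB\fg(\cN'_k)$ and its image under $F^*_k$. As in the proof of lemma \ref{lem:fibration}, the extension $\sO/\sI^{k+1} \to \sO/\sI^k$ is by the square-zero ideal $\sI^k/\sI^{k+1}$; writing a lift as $\alpha + \beta$ and expanding the Maurer-Cartan equation, every bracket $\ell_n(\alpha,\dots,\alpha,\beta)$ with $n \geq 2$ lands in $\sI^{k+1}_\cN$ and hence vanishes in $\sO_\cN/\sI^{k+1}_\cN$, so the part of the equation linear in $\beta$ is simply $\ell_1(\beta) + (\text{obstruction}) = 0$. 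Hence each fiber is the Maurer-Cartan space of the \emph{abelian} (merely a cochain complex) curved $\L8$ algebra
\[
\Gamma\big(N,\ \fg \ot_{\Omega^*_N} (\sI^k_\cN/\sI^{k+1}_\cN)\big)
\]
with its linear differential $\ell_1$, and by Getzler's property (3) its homotopy type is that of $\DK_\bullet$ of the truncation of this complex. The fiber map is induced by applying $\Gr\phi$ to the $\sI^k/\sI^{k+1}$ factor (after transport along $\varphi$) and the identity on the $\fg$ factor.

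The hard part will be showing this fiber map is a weak equivalence, i.e. a quasi-isomorphism of the two controlling complexes of global sections. On the level of sheaves on $N$ the map $\varphi^{-1}(\sI'^k/\sI'^{k+1}) \to \sI^k/\sI^{k+1}$ is a quasi-isomorphism: the differential on $\sO_\cN$ preserves each power $\sI^k_\cN$, so the induced differential on $\Gr\sO_\cN$ preserves the ideal-power grading and $\Gr\phi$ being a quasi-isomorphism forces a quasi-isomorphism on each graded piece. Tensoring with the locally free — hence flat — $\Omega^\sharp_N$-module $\fg$ preserves this, yielding a quasi-isomorphism of bounded complexes of \emph{fine} (soft) sheaves on the paracompact manifold $N$. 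Because fine sheaves are acyclic for the global-sections functor, such a sheaf quasi-isomorphism descends to a quasi-isomorphism on $\Gamma(N,-)$; this softness input is the one genuinely analytic step. Combined with the Dold--Kan identification of the fibers, it gives the fiberwise weak equivalence and closes the induction.
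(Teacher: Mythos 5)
Your proposal is correct and follows essentially the same route as the paper's proof: artinian induction up the tower of square-zero extensions, fixing the underlying smooth map, comparing fibrations via the map on fibers, and identifying each fiber as the Dold--Kan space of $\fg \ot \sI^{k}/\sI^{k+1}$ so that the hypothesis on $\Gr\phi$ yields the fiberwise equivalence. The only (welcome) difference is that you make explicit the passage from a quasi-isomorphism of sheaves to one on global sections via softness of $\cinf$-modules, a step the paper's proof leaves implicit.
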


\begin{proof}[Proof of proposition]
We have a diffeomorphism $f: N \to N'$ and a map of commutative dg algebras $\phi: f^{-1} \sO_{\cN'} \to \sO_\cN$ that is compatible with the filtration. Our plan is to show we have a weak equivalence
\[
F^*_k: \bB\fg(N_k') \to \bB\fg(N_k)
\]
for each level $k$ of the tower of nilpotent dg manifolds. 

The base case is straightforward: 
\[
F^*_1 = f^*: \bB\fg(N') \to \bB\fg(N)
\] 
is an isomorphism because $f$ is a diffeomorphism. 

Now suppose $F^*_{k-1}$ is a weak equivalence. To show $F^*_k$ is a weak equivalence, it suffices to show that the induced map on fibers is a weak equivalence by the long exact sequence in homotopy groups. Here we use remark \ref{FixTheMap}: it suffices to study the problem for each distinct smooth map $g: N' \to C$. This choice then fixes the smooth map $g \circ f: N \to X$. We will compare the fibers over $g$ and $h = g \circ f$ but suppress them from discussion.

The fiber for $N$ is then the Maurer-Cartan space for $h^* \fg \ot_{\Omega^*_N} \sI^{k-1}_\cN/\sI^k_\cN$. This curved $\L8$ algebra is, in fact, abelian because the $\sI^{k-1}/\sI^k$ is square-zero, so this simplicial set has another name: it arises from the cochain complex $h^* \fg \ot_{\Omega^*_N} \sI^{k-1}_\cN/\sI^k_\cN[1]$ under the Dold-Kan correspondence. 

By the hypothesis, we know that $\Gr \phi$ is a quasi-isomorphism. This means that the cochain complex $\sI^{k-1}_\cN/\sI^k_\cN$ is quasi-isomorphic to $\sI^{k-1}_{\cN'}/\sI^k_{\cN'}$ for every $k$. Hence the fibers --- as simplicial sets constructed by the Dold-Kan correspondence --- are weakly equivalent.
\end{proof}

It remains to show that $\bB\fg$ satisfies \v{C}ech descent. This argument is the trickiest because it involves homotopy limits.

\begin{prop}\label{prop:descent}
The simplicial presheaf $\bB\fg$ satisfies \v{C}ech descent.  That is, for $\fV$ any cover of $\cN$, the map
\[
\bB\fg (\cN) \to \holim_{\check{C}\fV} \bB\fg = \holim \check{C}^\bullet (\fV , \bB\fg)
\]
is a weak equivalence.
\end{prop}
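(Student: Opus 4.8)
The plan is to run the same artinian-style induction up the tower $\bB\fg(\cN) \to \bB\fg(\cN_n) \to \cdots \to \bB\fg(N)$ that powered Proposition \ref{prop:weakequiv}, but now comparing each stage with its homotopy limit over the \v{C}ech nerve. First I would fix the underlying smooth map, exactly as in Remark \ref{FixTheMap}: both $\bB\fg(\cN)$ and $\holim_{\check{C}\fV}\bB\fg$ decompose as disjoint unions indexed by smooth maps, and because $C^\infty(-,X)$ is an honest sheaf of sets on $\Man$, a coherent family of local maps over the \v{C}ech nerve is precisely a single global map $f : N \to X$. Thus it suffices to prove the comparison map is a weak equivalence on the component of each fixed $f$, and I fix such an $f$ from here on.

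Next I would organize the induction. The nilpotent filtration restricts to every open and to every intersection $V_{i_0\cdots i_p}$, so applying $\bB\fg$ levelwise to the \v{C}ech nerve of the cover of each $\cN_k = (N, \sO_\cN/\sI_\cN^k)$ produces a tower of \v{C}ech cosimplicial diagrams refining $\check{C}^\bullet(\fV, \bB\fg)$. Since each value of $\bB\fg$ is a Kan complex (a Maurer-Cartan space), these cosimplicial diagrams are projectively fibrant, so the right Quillen functor $\holim_{\check{C}\fV}$ of Proposition \ref{prop:universalprop} carries the levelwise fibrations of Lemma \ref{lem:fibration} to a tower of fibrations and preserves the associated homotopy fibers. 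I therefore obtain a map of towers of fibrations from $\{\bB\fg(\cN_k)\}$ to $\{\holim \check{C}^\bullet(\fV, \bB\fg|_{\cN_k})\}$, and it is enough to show the comparison map is a weak equivalence at each stage, inducting on $k$ and using the long exact sequence in homotopy groups together with the five lemma.

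For the base case $k=1$ I would verify descent for the discrete presheaf $C^\infty(-,X)$ of Lemma \ref{lem:basecaseredux}: the homotopy limit of the \v{C}ech cosimplicial diagram of a sheaf of sets is just its equalizer of sections, with no higher homotopy, so the comparison map is an isomorphism onto $C^\infty(N,X)$ (compare lemma \ref{lem:baseofcech}). For the inductive step, the homotopy fiber of $\bB\fg(\cN_{k+1}) \to \bB\fg(\cN_k)$ is the Maurer-Cartan space of the \emph{abelian} curved $\L8$ algebra $f^*\fg \ot_{\Omega^*_N} (\sI^k_\cN/\sI^{k+1}_\cN)$, exactly as in the proof of Proposition \ref{prop:weakequiv}, and by Getzler's property (3) this is $\DK$ of a cochain complex of sheaves of $\cinf_N$-modules. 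The fiber comparison map thereby becomes a statement about homotopy limits of simplicial abelian groups over the \v{C}ech nerve, and this is precisely where the linear machinery of the appendix applies: via Propositions \ref{prop:BK} and \ref{prop:BSS}, $\holim^R_{\check{C}\fV}$ of such a presheaf is computed by the ordinary \v{C}ech complex of the underlying sheaf of cochain complexes. Since $f^*\fg \ot_{\Omega^*_N}(\sI^k_\cN/\sI^{k+1}_\cN)$ is a sheaf of $\cinf_N$-modules and hence fine (it admits partitions of unity), its higher \v{C}ech cohomology vanishes and the \v{C}ech complex is a resolution of its global sections; translating back through Dold-Kan shows the fiber comparison map is a weak equivalence. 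The five lemma then propagates the equivalence from stage $k$ to stage $k+1$, and since $\sI^{n+1}_\cN = 0$ the induction terminates at $\cN$ itself.

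The hard part will be handling the homotopy limits cleanly: one must justify that $\holim_{\check{C}\fV}$ commutes with the finite tower and preserves the homotopy fiber sequences, and one must check that the decomposition into components indexed by smooth maps is respected by the homotopy limit, which is exactly what forces the appeal to the sheaf property of $C^\infty(-,X)$ at the outset. The genuinely substantive input, however, is the linear descent for the fibers, where the softness of sheaves of $\cinf_N$-modules turns \v{C}ech descent into the vanishing of higher \v{C}ech cohomology; packaging this homological fact as a weak equivalence of simplicial sets is precisely what Propositions \ref{prop:BK}--\ref{prop:BSS} are designed to accomplish.
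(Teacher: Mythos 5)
Your proposal is correct and follows essentially the same route as the paper's proof: artinian induction up the tower of square-zero extensions, with the base case handled by the sheaf property of $C^\infty(-,X)$, the inductive step reduced to a comparison of fibers using that $\holim$ is right Quillen, and the fiber comparison settled by identifying the abelian Maurer--Cartan space with a Dold--Kan construction and invoking Proposition \ref{prop:BSS} together with partition-of-unity (softness) arguments for the \v{C}ech complex of $\fg \ot \sI^{k-1}/\sI^k$. The only cosmetic difference is that you foreground the decomposition into components indexed by smooth maps and cite Proposition \ref{prop:BK} explicitly, both of which the paper handles implicitly.
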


This argument also proceeds by artinian induction. We fix useful notation. Let $\fV = \{ V_i\}$ denote a cover of a smooth manifold $N$, and let $\fV_k = \{ (V_i, \sO_{\cN_k} |_{V_i})\}$ denote the associated cover of the nilpotent dg manifold $\cN_k$.

The base case of the argument is the following standard fact.

\begin{lemma}\label{lem:baseofcech}
The natural map $\bB\fg(N) \to \holim_{\check{C}\fV} \bB\fg$ is a weak equivalence of simplicial sets.
\end{lemma}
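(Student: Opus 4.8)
The plan is to reduce the homotopy limit to an ordinary sheaf-theoretic limit, exploiting that everything in sight is \emph{discrete}. First I would observe that the nilpotent ideal of the smooth manifold $N = (N,\cinf_N)$ vanishes, $\sI_N = 0$, so by Lemma \ref{lem:basecase} the presheaf $\bB\fg$ restricted to the site $\Man$ is the discrete simplicial presheaf $Y \mapsto C^\infty(Y,X)$. Because $\sI_N=0$, every term of the \v{C}ech nerve $\check{C}\fV_\bullet$ is again an ordinary manifold (a disjoint union of the intersections $V_{i_0\cdots i_p}$), so the \v{C}ech cosimplicial diagram $\check{C}^\bullet(\fV,\bB\fg)$ is \emph{levelwise discrete}: it is the image, under the inclusion $\Sets\hookrightarrow\ssets$ of discrete simplicial sets, of the cosimplicial set
\[
Y^p := \prod_{i_0\cdots i_p} C^\infty(V_{i_0\cdots i_p},X).
\]

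Next I would compute the homotopy limit of this levelwise-discrete diagram. Since discrete simplicial sets are Kan complexes, $\check{C}^\bullet(\fV,\bB\fg)$ is objectwise (hence projectively) fibrant, so by Proposition \ref{prop:universalprop} the formula $\Maps(\Delta_{/-},-)$ already computes the correct homotopy limit with no fibrant replacement. To read off its homotopy type I would invoke the Bousfield--Kan spectral sequence \cite{BK}
\[
E_2^{s,t} = \pi^s\bigl(\pi_t \check{C}^\bullet(\fV,\bB\fg)\bigr) \Longrightarrow \pi_{t-s}\Bigl(\holim_\Delta \check{C}^\bullet(\fV,\bB\fg)\Bigr).
\]
As each level is discrete, $\pi_t$ vanishes for $t\geq 1$, so the only row of $E_2$ that can contribute to a nonnegative homotopy group of the homotopy limit is $t=0$; and within that row only $s=0$ contributes (the entry $E_2^{s,0}$ lands in $\pi_{-s}$). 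Hence $\pi_n(\holim)=0$ for $n\geq 1$ and
\[
\pi_0\Bigl(\holim_\Delta \check{C}^\bullet(\fV,\bB\fg)\Bigr) \cong \mathrm{eq}\bigl(Y^0 \rightrightarrows Y^1\bigr),
\]
the ordinary limit of the cosimplicial set $Y^\bullet$. Thus the homotopy limit is weakly equivalent to this discrete equalizer.

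Finally I would identify the equalizer with $C^\infty(N,X)$. The assignment $Y\mapsto C^\infty(Y,X)$ is a genuine sheaf of sets on $\Man$, since smooth maps are determined locally and glue along overlaps; thus $\mathrm{eq}\bigl(\prod_i C^\infty(V_i,X)\rightrightarrows\prod_{ij}C^\infty(V_{ij},X)\bigr)$ is precisely $C^\infty(N,X)=\bB\fg(N)$. A direct check shows that the canonical map $\bB\fg(N)\to\holim_\Delta\check{C}^\bullet(\fV,\bB\fg)$ induced by the augmentation is exactly this comparison, an isomorphism onto $\pi_0$, hence a weak equivalence.

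The main obstacle is the middle step: verifying that the homotopy limit of a levelwise-discrete cosimplicial diagram collapses to its ordinary ($1$-categorical) equalizer. The subtlety is that such a diagram is \emph{not} Reedy fibrant, so one cannot simply apply $\Tot$; one must compute the homotopy-invariant value, either through the $\Maps(\Delta_{/-},-)$ model on the objectwise-fibrant diagram together with the spectral-sequence collapse above, or more conceptually by appealing to the standard fact that a sheaf of sets, regarded as a discrete simplicial presheaf, is automatically a homotopy sheaf. Everything else is the routine sheaf condition for smooth maps.
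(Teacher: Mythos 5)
Your proposal is correct and follows essentially the same route as the paper: observe via Lemma \ref{lem:basecaseredux} that every term of the \v{C}ech diagram is discrete, identify the homotopy limit of a levelwise-discrete cosimplicial diagram with the ordinary equalizer, and conclude from the fact that $C^\infty(-,X)$ is a sheaf of sets on $\Man$ --- the paper simply asserts the middle step, whereas you justify it with the Bousfield--Kan spectral sequence. One small correction to your closing caveat: a levelwise-discrete cosimplicial simplicial set \emph{is} Reedy fibrant (any map of discrete simplicial sets is a Kan fibration, so all matching maps are fibrations), and in fact an $n$-simplex of either $\Tot$ or $\Maps(\Delta_{/-},-)$ is a compatible family of maps from connected simplicial sets into discrete ones, which identifies the homotopy limit with the equalizer directly and makes the spectral sequence unnecessary.
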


\begin{proof}
By lemma \ref{lem:basecaseredux}, we know that all the simplicial sets here are discrete. In particular, there is the associated map of sets 
\[
C^\infty(N,X) \to \holim {\check{C}(\fV, C^\infty(-,X))}.
\]
Hence the homotopy limit agrees with the limit, and we know that $C^\infty(-,X)$ is a sheaf of sets on $\Man$, so that map of sets is an isomorphism. 
\end{proof}

With the base case behind us, we tackle the induction step. Observe that we have a commuting square of simplicial sets
\[
\xymatrix{
\bB\fg(\cN_k) \ar[d] \ar[r] &  \holim_{\check{C}\fV_k} \bB\fg \ar[d]\\
\bB\fg(\cN_{k-1}) \ar[r] &  \holim_{\check{C}\fV_{k-1}} \bB\fg
}
\]
where the left vertical map is a fibration and the bottom horizontal map is a weak equivalence. We will show that the right vertical map is a fibration and that the induced map on fibers is a weak equivalence. This then implies that the top horizontal map is a weak equivalence, finishing the induction step.

\begin{lemma}
The map
\[
\check{C}(\fV_k, \bB\fg) \to \check{C}(\fV_{k-1}, \bB\fg)
\]
is a fibration of cosimplicial simplicial sets. As $\holim_\Delta: cs\!\Sets \to s\!\Sets$ preserves fibrations,\footnote{It is the {\em right} adjoint in a Quillen adjunction
\[
c: s\!\Sets \leftrightarrows cs\!\Sets: \holim
\]
as discussed in proposition \ref{prop:universalprop}.} we see that
\[
\holim_{\check{C}\fV_k} \bB\fg \to \holim_{\check{C}\fV_{k-1}} \bB\fg
\]
is a fibration of simplicial sets.
\end{lemma}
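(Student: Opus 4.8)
The plan is to unpack what ``fibration of cosimplicial simplicial sets'' means in the conventions fixed earlier. By the footnote and proposition \ref{prop:universalprop}, the relevant structure on $cs\!\Sets = \Fun(\Delta,\ssets)$ is the \emph{projective} model structure, in which both weak equivalences and fibrations are defined objectwise. Thus it suffices to show that for each cosimplicial degree $p$ the map of simplicial sets
\[
\check{C}^p(\fV_k, \bB\fg) \to \check{C}^p(\fV_{k-1}, \bB\fg)
\]
is a Kan fibration. First I would unwind the Čech cosimplicial diagram via the formula $\check{C}^p(\fV;\cF) = \prod_{i_0 \dotsb i_p} \cF(V_{i_0 \dotsb i_p})$ (equivalently, recall that $\bB\fg$ sends the disjoint unions defining the levels of the Čech nerve to products). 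The degree-$p$ map is then the product, over all multi-indices $(i_0,\ldots,i_p)$, of the maps
\[
\bB\fg\bigl(V_{i_0 \dotsb i_p}, \sO_{\cN_k}|_{V_{i_0 \dotsb i_p}}\bigr) \to \bB\fg\bigl(V_{i_0 \dotsb i_p}, \sO_{\cN_{k-1}}|_{V_{i_0 \dotsb i_p}}\bigr).
\]

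Next, since an arbitrary product of Kan fibrations is again a Kan fibration (the lifting property against horn inclusions is checked coordinatewise), it is enough to treat a single factor. Here I would observe that $(V_{i_0 \dotsb i_p}, \sO_\cN|_{V_{i_0 \dotsb i_p}})$ is itself a nilpotent dg manifold: restricting the graded algebra bundle, its nilpotent ideal filtration, and the degree-$1$ derivation to an open subset of $N$ preserves every defining axiom, and the tower of quotients for this restriction is just the restriction of the tower for $\cN$. Consequently each factor is precisely a map of the form $\bB\fg(\cN'_k) \to \bB\fg(\cN'_{k-1})$ for the nilpotent dg manifold $\cN' = (V_{i_0 \dotsb i_p}, \sO_\cN|_{V_{i_0 \dotsb i_p}})$, so lemma \ref{lem:fibration} (with its indices shifted down by one) already guarantees it is a Kan fibration. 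This shows the cosimplicial map is an objectwise, hence projective, fibration.

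Finally, the second assertion is immediate. By proposition \ref{prop:universalprop}, $\holim_\Delta = \Maps(\Delta_{/-},-)$ is the right adjoint in the Quillen adjunction with left adjoint $\C' = \Delta_{/-}\times -$, so it is a right Quillen functor and therefore preserves fibrations (not merely those between fibrant objects). Applying it to the projective fibration just constructed yields that $\holim_{\check{C}\fV_k}\bB\fg \to \holim_{\check{C}\fV_{k-1}}\bB\fg$ is a Kan fibration of simplicial sets. I do not expect a genuine obstacle here, because the hard homotopical input---that each truncation map of Maurer--Cartan functors is a fibration---was already carried out in lemma \ref{lem:fibration}; the points that require care are only bookkeeping, namely fixing the model-structure convention (projective rather than Reedy, so that fibrations really are levelwise) and confirming that restriction along an open inclusion stays within the category of nilpotent dg manifolds so that the earlier lemma applies verbatim.
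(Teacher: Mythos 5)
Your proposal is correct and follows the same route as the paper: reduce to an objectwise (projective) fibration of cosimplicial simplicial sets via lemma \ref{lem:fibration}, then apply the fact that $\holim_\Delta$ is a right Quillen adjoint. You simply fill in details the paper leaves implicit (products of Kan fibrations, restriction of a nilpotent dg manifold to an open), which is harmless.
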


\begin{proof}
We already showed in lemma \ref{lem:fibration} that $\bB\fg$ is a fibration along any square-zero extension. Hence, we know the map of \v{C}ech diagrams is an objectwise fibration and hence a fibration in the projective model structure on $\cssets$. Thus, the homotopy limit, as a right Quillen adjoint, gives us a fibration of simplicial sets.
\end{proof}

It remains to show that the map of fibers is a weak equivalence. We know that the fiber of the map $\bB\fg(\cN_k) \to \bB\fg(\cN_{k-1})$ is simply
\[
\MC_\bullet(\fg \ot \sI^{k-1}/\sI^{k}(N)),
\]
the Maurer-Cartan space for the $\L8$ algebra $\fg \ot \sI^{k-1}/\sI^{k}(N)$. (We include the notation ``$(N)$" to emphasize that we are taking global sections of this sheaf of $\L8$ algebras.) On the other side of the square above, we know that $\holim$ preserves fibrations, so that the fiber of the map $\holim_{\check{C}\fV_k} \bB\fg \to \holim_{\check{C}\fV_{k-1}} \bB\fg$ is given by
\[
\holim \check{C}^\bullet (\fV, \MC_\bullet(\fg \ot \sI^{k-1}/\sI^{k})),
\]
namely the \v{C}ech cosimplicial diagram for the cover $\fV$ with respect to the simplicial presheaf that assigns the Maurer-Cartan space for the sheaf of $\L8$ algebras $\fg \ot \sI^{k-1}/\sI^{k}$.

\begin{lemma}
The map between the fibers
\[
\MC(\fg \ot \sI^{k-1}/\sI^{k})(N) \to \holim \check{C}^\bullet (\fV, \MC(\fg \ot \sI^{k-1}/\sI^{k})) 
\]
is a weak equivalence.
\end{lemma}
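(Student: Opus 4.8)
The plan is to exploit the fact that $\fg \ot \sI^{k-1}/\sI^{k}$ is an \emph{abelian} $\L8$ algebra, which turns the statement into a purely linear question of \v{C}ech descent for a complex of sheaves, attackable with the machinery of appendix \ref{app:holim}. First I would record that, for $k \geq 2$, the ideal $\sI^{k-1}/\sI^{k}$ is square-zero: the product of two sections of $\sI^{k-1}$ lands in $\sI^{2(k-1)} \subseteq \sI^{k}$, hence vanishes in the quotient. Consequently every bracket $\ell_n$ with $n \geq 2$ on $\fg \ot \sI^{k-1}/\sI^{k}$ vanishes, since each such bracket multiplies at least two elements of the ideal together. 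Thus $\fg \ot \sI^{k-1}/\sI^{k}$ is merely a complex of sheaves on $N$, and by property (3) of the Maurer-Cartan functor (appendix \ref{app:MC}) its Maurer-Cartan space is the Dold-Kan image $\DK_\bullet$ of the (shifted, truncated) underlying cochain complex. In particular, $\MC(\fg \ot \sI^{k-1}/\sI^{k})$ is a simplicial presheaf valued in simplicial abelian groups, and the map in question is the augmentation from global sections into the \v{C}ech construction.

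Next I would identify the target $\holim$ with an honest \v{C}ech complex. Since the values are simplicial abelian groups, the \v{C}ech cosimplicial diagram $\check{C}^\bullet(\fV, \MC(\fg \ot \sI^{k-1}/\sI^{k}))$ is a cosimplicial simplicial abelian group, hence Reedy-fibrant by the Bousfield-Kan result recalled in appendix \ref{app:holim}. Therefore its homotopy limit is computed up to weak equivalence by totalization $\Tot$, using proposition \ref{prop:BK}. Applying proposition \ref{prop:BSS} (and the remark following it), the normalized chains of this totalization are quasi-isomorphic to the total complex of the double complex obtained by conormalizing in the cosimplicial (\v{C}ech) direction and normalizing in the simplicial direction. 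Concretely, this total complex is the \v{C}ech complex of the cover $\fV$ with coefficients in the complex of sheaves $\fg \ot \sI^{k-1}/\sI^{k}$, so after normalization the map we must analyze becomes the augmentation from the global-sections complex into this \v{C}ech double complex.

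The geometric input is then standard: $\fg \ot \sI^{k-1}/\sI^{k}$ is a complex of sheaves of smooth sections of (topological) vector bundles, tensored over $\cinf_N$ with differential forms, hence a complex of fine (equivalently soft) sheaves of $\cinf_N$-modules. Fine sheaves are \v{C}ech-acyclic, so the higher \v{C}ech cohomology vanishes and the augmentation from global sections to the \v{C}ech complex is a quasi-isomorphism; truncating and passing back through Dold-Kan, the map on fibers is a weak equivalence of simplicial sets. The hard part will be the bookkeeping in the middle step — precisely matching the homotopy limit of the diagram of Maurer-Cartan spaces to the \v{C}ech complex of the underlying complex of sheaves, i.e., checking that the conormalization/normalization of proposition \ref{prop:BSS} genuinely reproduces the \v{C}ech differential and correctly tracks the Dold-Kan truncation. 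Once that identification is secured, the vanishing of higher \v{C}ech cohomology for fine sheaves closes the argument immediately.
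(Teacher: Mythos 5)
Your proposal is correct and follows essentially the same route as the paper's own proof: reduce to the abelian case via the square-zero ideal $\sI^{k-1}/\sI^{k}$, identify the Maurer--Cartan spaces with Dold--Kan simplicial abelian groups, use proposition \ref{prop:BSS} (with Reedy fibrancy and totalization) to replace the homotopy limit by the total complex of the \v{C}ech double complex, and conclude by \v{C}ech-acyclicity of fine sheaves via partitions of unity. The only difference is that you spell out the square-zero observation and the Reedy-fibrancy bookkeeping slightly more explicitly than the paper does.
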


This lemma concludes the proof of proposition \ref{prop:descent}.

\begin{proof}
Recall that for an abelian $\L8$ algebra $\mathfrak{h}$, the Maurer-Cartan space $\MC_\bullet(\frak{h})$ is just the Dold-Kan space $\DK_\bullet(\trun(\frak{h}[1]))$. Note that the Dold-Kan space only depends on the truncation $\trun(\frak{h}[1])$, where all components of $\fh$ of degree greater than 1 are eliminated. In particular, we see that
\[
\MC_\bullet(\fg \ot \sI^{k-1}/\sI^{k}(N)) = \DK_\bullet(\trun(\fg \ot \sI^{k-1}/\sI^{k}(N)[1])),
\]
so that the fiber is determined by a cochain complex.

As $\fg \ot \sI^{k-1}/\sI^{k}$ is a sheaf of abelian $\L8$ algebras, we are simply applying $\DK_\bullet$ to the value on every open. Hence $\check{C}^\bullet (\fV, \DK_\bullet(\trun(\fg \ot \sI^{k-1}/\sI^{k}[1])))$ is a cosimplicial simplicial abelian group. By proposition \ref{prop:BSS}, we know that $\holim \check{C}^\bullet (\fV, \MC_\bullet(\fg \ot \sI^{k-1}/\sI^{k}))$ is thus weakly equivalent to the Dold-Kan simplicial set of  
\[
T\check{C}(\fV,\trun(\fg \ot \sI^{k-1}/\sI^{k}[1])),
\]
the total complex formed from the \v{C}ech double complex for the sheaf $\trun(\fg \ot \sI^{k-1}/\sI^{k}[1])$ on the cover $\fV$. 

Thus, to prove the lemma, we verify instead that
\[
\trun(\fg \ot \sI^{k-1}/\sI^{k})(N)[1]) \to T\check{C}(\fV,\trun(\fg \ot \sI^{k-1}/\sI^{k}[1]))
\]
is a quasi-isomorphism, as this implies that the Dold-Kan simplicial sets are weakly equivalent.

This map is a quasi-isomorphism because $\fg \ot \sI^{k-1}/\sI^{k}$ consists of smooth sections of a vector bundle on $N$, so the usual partition of unity arguments apply.
\end{proof}

\bibliographystyle{amsalpha}
\bibliography{current}

\providecommand{\bysame}{\leavevmode\hbox to3em{\hrulefill}\thinspace}
\providecommand{\MR}{\relax\ifhmode\unskip\space\fi MR }
\providecommand{\MRhref}[2]{%
  \href{http://www.ams.org/mathscinet-getitem?mr=#1}{#2}
}
\providecommand{\href}[2]{#2}
\begin{thebibliography}{PTVV13}

\bibitem[AC12]{AbadCrainic}
Camilo~Arias Abad and Marius Crainic, \emph{Representations up to homotopy of
  {L}ie algebroids}, J. Reine Angew. Math. \textbf{663} (2012), 91--126.
  \MR{2889707}

\bibitem[ASZK97]{AKSZ}
M.~Alexandrov, A.~Schwarz, O.~Zaboronsky, and M.~Kontsevich, \emph{The geometry
  of the master equation and topological quantum field theory}, Internat. J.
  Modern Phys. A \textbf{12} (1997), no.~7, 1405--1429. \MR{1432574
  (98a:81235)}

\bibitem[BBB13]{BBandBlock}
Oren Ben-Bassat and Jonathan Block, \emph{Milnor descent for cohesive
  dg-categories}, J. K-Theory \textbf{12} (2013), no.~3, 433--459. \MR{3165183}

\bibitem[BK72]{BK}
A.~K. Bousfield and D.~M. Kan, \emph{Homotopy limits, completions and
  localizations}, Lecture Notes in Mathematics, Vol. 304, Springer-Verlag,
  Berlin-New York, 1972. \MR{0365573 (51 \#1825)}

\bibitem[Blo10]{Block}
Jonathan Block, \emph{Duality and equivalence of module categories in
  noncommutative geometry}, A celebration of the mathematical legacy of {R}aoul
  {B}ott, CRM Proc. Lecture Notes, vol.~50, Amer. Math. Soc., Providence, RI,
  2010, pp.~311--339. \MR{2648899 (2011k:19005)}

\bibitem[BN]{BorisovNoel}
Dennis Borisov and Justin Noel, \emph{Simplicial approach to derived
  differential manifolds}, available at
  \href{http://front.math.ucdavis.edu/1112.0033}{arXiv:1112.0033}.

\bibitem[Bou89]{BSS}
A.~K. Bousfield, \emph{Homotopy spectral sequences and obstructions}, Israel J.
  Math. \textbf{66} (1989), no.~1-3, 54--104. \MR{1017155 (91a:55027)}

\bibitem[BS14]{BlockSmith}
Jonathan Block and Aaron~M. Smith, \emph{The higher {R}iemann-{H}ilbert
  correspondence}, Adv. Math. \textbf{252} (2014), 382--405. \MR{3144234}

\bibitem[BZN12]{BZN}
David Ben-Zvi and David Nadler, \emph{Loop spaces and connections}, J. Topol.
  \textbf{5} (2012), no.~2, 377--430. \MR{2928082}

\bibitem[CFT02]{CFT}
Alberto~S. Cattaneo, Giovanni Felder, and Lorenzo Tomassini, \emph{From local
  to global deformation quantization of {P}oisson manifolds}, Duke Math. J.
  \textbf{115} (2002), no.~2, 329--352. \MR{1944574 (2004a:53114)}

\bibitem[Cos]{WG2}
Kevin Costello, \emph{A geometric construction of the {W}itten genus, {II}},
  available at \href{http://front.math.ucdavis.edu/1112.0816}{arXiv:1112.0816}.

\bibitem[Cos10]{Cos2}
\bysame, \emph{A geometric construction of the {W}itten genus, {I}},
  Proceedings of the {I}nternational {C}ongress of {M}athematicians. {V}olume
  {II}, Hindustan Book Agency, New Delhi, 2010, pp.~942--959. \MR{2827826}

\bibitem[Cos11]{Cos1}
\bysame, \emph{Renormalization and effective field theory}, Mathematical
  Surveys and Monographs, vol. 170, American Mathematical Society, Providence,
  RI, 2011. \MR{2778558}

\bibitem[CR13]{CarchediRoytenberg}
David Carchedi and Dmitry Roytenberg, \emph{On theories of superalgebras of
  differentiable functions}, Theory Appl. Categ. \textbf{28} (2013), No. 30,
  1022--1098. \MR{3121621}

\bibitem[Dou14]{DouglasChap}
CL~Douglas, \emph{Sheaves in homotopy theory}, {Topological modular forms.
  Based on the Talbot workshop, North Conway, NH, USA, March 25--31, 2007.}
  (Christopher~L. {Douglas}, John {Francis}, Andr\'e~G. {Henriques}, and
  Michael~A. {Hill}, eds.), Providence, RI: American Mathematical Society
  (AMS), 2014, pp.~xxxi + 318 (English).

\bibitem[DS95]{DwyerSpalinski}
W.~G. Dwyer and J.~Spali{\'n}ski, \emph{Homotopy theories and model
  categories}, Handbook of algebraic topology, North-Holland, Amsterdam, 1995,
  pp.~73--126. \MR{1361887 (96h:55014)}

\bibitem[Dug]{Dugger}
Daniel Dugger, \emph{A primer on homotopy colimits}, available at the
  \href{http://math.uoregon.edu/~ddugger/hocolim.pdf}{author's homepage}.

\bibitem[Get09]{Getzler}
Ezra Getzler, \emph{Lie theory for nilpotent {$L\sb \infty$}-algebras}, Ann. of
  Math. (2) \textbf{170} (2009), no.~1, 271--301. \MR{2521116 (2010g:17026)}

\bibitem[GG]{GGqft}
Ryan Grady and Owen Gwilliam, \emph{${L}_\infty$ spaces and the
  {B}atalin-{V}ilkovisky formalism for quantum field theory}, In progress.

\bibitem[GG14]{GG}
Owen Gwilliam and Ryan Grady, \emph{One-dimensional {C}hern--{S}imons theory
  and the \^{A} genus}, Algebr. Geom. Topol. \textbf{14} (2014), no.~4,
  419--497. \MR{3252058}

\bibitem[GR]{GaitsgoryRozenblyum}
Dennis Gaitsgory and Nick Rozenblyum, \emph{Crystals and {D}-modules},
  available at \href{http://front.math.ucdavis.edu/1111.2087}{arXiv:1111.2087}.

\bibitem[Hen]{Hennion}
Benjamin Hennion, \emph{Tangent {L}ie algebra of derived {A}rtin stacks},
  available at \href{http://front.math.ucdavis.edu/1312.3167}{arXiv:1312.3167}.

\bibitem[Hin01]{Hinich}
Vladimir Hinich, \emph{D{G} coalgebras as formal stacks}, J. Pure Appl. Algebra
  \textbf{162} (2001), no.~2-3, 209--250. \MR{1843805 (2002f:14008)}

\bibitem[Hir03]{Hirschhorn}
Philip~S. Hirschhorn, \emph{Model categories and their localizations},
  Mathematical Surveys and Monographs, vol.~99, American Mathematical Society,
  Providence, RI, 2003. \MR{1944041 (2003j:18018)}

\bibitem[Hov99]{Hovey}
Mark Hovey, \emph{Model categories}, Mathematical Surveys and Monographs,
  vol.~63, American Mathematical Society, Providence, RI, 1999. \MR{1650134
  (99h:55031)}

\bibitem[Joy]{Joyce}
Dominic Joyce, \emph{D-manifolds and d-orbifolds: a theory of derived
  differential geometry}, available at the
  \href{http://people.maths.ox.ac.uk/~joyce/dmanifolds.html}{author's
  homepage}.

\bibitem[Kap91]{Kapranov}
M.~M. Kapranov, \emph{On {DG}-modules over the de {R}ham complex and the
  vanishing cycles functor}, Algebraic geometry ({C}hicago, {IL}, 1989),
  Lecture Notes in Math., vol. 1479, Springer, Berlin, 1991, pp.~57--86.
  \MR{1181207 (94e:14028)}

\bibitem[Kon03]{KonDQ}
Maxim Kontsevich, \emph{Deformation quantization of {P}oisson manifolds}, Lett.
  Math. Phys. \textbf{66} (2003), no.~3, 157--216. \MR{2062626 (2005i:53122)}

\bibitem[Lura]{LurieDAGV}
Jacob Lurie, \emph{Derived {A}lgebraic {G}eometry {V}: {S}tructured {S}paces},
  available at \href{http://front.math.ucdavis.edu/0905.0459}{arXiv:0905.0459}.

\bibitem[Lurb]{LurieDAGX}
\bysame, \emph{Derived {A}lgebraic {G}eometry {X}: {F}ormal {M}oduli {P}},
  available at the \href{http://www.math.harvard.edu/~lurie/}{author's
  homepage}.

\bibitem[Lur10]{LurieICM}
\bysame, \emph{Moduli problems for ring spectra}, Proceedings of the
  {I}nternational {C}ongress of {M}athematicians. {V}olume {II}, Hindustan Book
  Agency, New Delhi, 2010, pp.~1099--1125. \MR{2827833 (2012i:55008)}

\bibitem[Mac05]{Mackenzie}
Kirill C.~H. Mackenzie, \emph{General theory of {L}ie groupoids and {L}ie
  algebroids}, London Mathematical Society Lecture Note Series, vol. 213,
  Cambridge University Press, Cambridge, 2005. \MR{2157566 (2006k:58035)}

\bibitem[Meh09]{MehtaCoh}
Rajan~Amit Mehta, \emph{{$Q$}-algebroids and their cohomology}, J. Symplectic
  Geom. \textbf{7} (2009), no.~3, 263--293. \MR{2534186 (2011b:58040)}

\bibitem[MS06]{MovSchw}
M.~Movshev and A.~Schwarz, \emph{Algebraic structure of {Y}ang-{M}ills theory},
  The unity of mathematics, Progr. Math., vol. 244, Birkh\"auser Boston,
  Boston, MA, 2006, pp.~473--523. \MR{2181815 (2007a:81147)}

\bibitem[PTVV13]{PTVV}
Tony Pantev, Bertrand To{\"e}n, Michel Vaqui{\'e}, and Gabriele Vezzosi,
  \emph{Shifted symplectic structures}, Publ. Math. Inst. Hautes \'Etudes Sci.
  \textbf{117} (2013), 271--328. \MR{3090262}

\bibitem[Rin63]{Rinehart}
George~S. Rinehart, \emph{Differential forms on general commutative algebras},
  Trans. Amer. Math. Soc. \textbf{108} (1963), 195--222. \MR{0154906 (27
  \#4850)}

\bibitem[Sch]{Schreiber}
Urs Schreiber, \emph{Differential cohomology in a cohesive infinity-topos},
  available at \href{http://front.math.ucdavis.edu/1310.7930}{arXiv:1310.7930}.

\bibitem[Sch93]{Schwarz}
Albert Schwarz, \emph{Geometry of {B}atalin-{V}ilkovisky quantization}, Comm.
  Math. Phys. \textbf{155} (1993), no.~2, 249--260. \MR{1230027 (95f:81095)}

\bibitem[Seg74]{SegalCohom}
Graeme Segal, \emph{Categories and cohomology theories}, Topology \textbf{13}
  (1974), 293--312. \MR{0353298 (50 \#5782)}

\bibitem[Shu]{Shulman}
Michael Shulman, \emph{Homotopy limits and colimits and enriched homotopy
  theory}, available at
  \href{http://front.math.ucdavis.edu/0610.5194}{arXiv:0610.5194}.

\bibitem[Spi10]{Spivak}
David~I. Spivak, \emph{Derived smooth manifolds}, Duke Math. J. \textbf{153}
  (2010), no.~1, 55--128. \MR{2641940 (2012a:57043)}

\bibitem[ST]{SimpsonTeleman}
Carlos Simpson and Constantin Teleman, \emph{De {R}ham's theorem for
  $\infty$-stacks}, available at
  \href{http://math.berkeley.edu/~teleman/math/simpson.pdf}{Teleman's
  homepage}.

\bibitem[STV]{STV}
Timo Sch\"urg, Bertrand To\"en, and Gabriele Vezzosi, \emph{Derived algebraic
  geometry, determinants of perfect complexes, and applications to obstruction
  theories for maps and complexes}, available at
  \href{http://front.math.ucdavis.edu/1102.1150}{arXiv:1102.1150}.

\bibitem[To{\"e}09]{ToenSeattle}
Bertrand To{\"e}n, \emph{Higher and derived stacks: a global overview},
  Algebraic geometry---{S}eattle 2005. {P}art 1, Proc. Sympos. Pure Math.,
  vol.~80, Amer. Math. Soc., Providence, RI, 2009, pp.~435--487. \MR{2483943
  (2010h:14021)}

\bibitem[TV05]{TV}
Bertrand To{\"e}n and Gabriele Vezzosi, \emph{Homotopical algebraic geometry.
  {I}. {T}opos theory}, Adv. Math. \textbf{193} (2005), no.~2, 257--372.
  \MR{2137288 (2007b:14038)}

\bibitem[TV09]{TVonLX}
\bysame, \emph{Chern character, loop spaces and derived algebraic geometry},
  Algebraic topology, Abel Symp., vol.~4, Springer, Berlin, 2009, pp.~331--354.
  \MR{2597742 (2011d:14031)}

\bibitem[TV11]{TVonHKR}
\bysame, \emph{Alg\`ebres simpliciales {$S\sp 1$}-\'equivariantes, th\'eorie de
  de {R}ham et th\'eor\`emes {HKR} multiplicatifs}, Compos. Math. \textbf{147}
  (2011), no.~6, 1979--2000. \MR{2862069}

\bibitem[Wei94]{Weibel}
Charles~A. Weibel, \emph{An introduction to homological algebra}, Cambridge
  Studies in Advanced Mathematics, vol.~38, Cambridge University Press,
  Cambridge, 1994. \MR{1269324 (95f:18001)}

\end{thebibliography}

\end{document}